\documentclass[a4paper,12pt]{amsart}
\usepackage{amsfonts}
\usepackage{amsmath,array}
\usepackage{amssymb}
\usepackage{mathrsfs}
\usepackage[colorlinks]{hyperref}
 \usepackage{graphicx}
\usepackage{enumerate}
\usepackage[usenames]{color}
\usepackage{cases}
\usepackage{yhmath}
\title[Trigonometric bases in noncommutative $L_p(\mathbb{T}^d_\theta)$]{Trigonometric bases in noncommutative $L_p(\mathbb{T}^d_\theta)$ spaces and associated partial sum operators}
\author{B. Ozbekbay}
\address{
Al-Farabi Kazakh National University, 050040 Almaty, Kazakhstan and
Institute of Mathematics and Mathematical Modeling, 050010 Almaty, Kazakhstan.}
\email{ozbekbay.b00@gmail.com}

\author{F. Sukochev}
\address{School of Mathematics and Statistics, University of New South
Wales, Kensington, 2052, NSW, Australia.}
\email{f.sukochev@unsw.edu.au}
\author{K. Tulenov}
\address{Institute of Mathematics and Mathematical Modeling, 050010 Almaty, Kazakhstan and School of Mathematics and Statistics, University of New South Wales, Kensington, NSW, 2052, Australia}
\email{kanat.tulenov@unsw.edu.au}
\date{}

\newtheorem{theorem}{Theorem}[section]
\newtheorem{lemma}[theorem]{Lemma}
\newtheorem{definition}[theorem]{Definition}
\newtheorem{corollary}[theorem]{Corollary}
\newtheorem{rem}[theorem]{Remark}
\newtheorem{proposition}[theorem]{Proposition}
\newtheorem{convention}[theorem]{Convention}
\newtheorem{example}[theorem]{Example}
\begin{document}

\subjclass[2010]{46E30, 46L51, 46L52, 44A15, 46B15;  Secondary: 43A77, 47C15.}
\keywords{Noncommutative torus, symmetric space, trigonometric basis, Schauder bases, RUC-basis, partial sum}
\date{}
\begin{abstract}  We develop a harmonic-analytic method for constructing a generalized trigonometric system in noncommutative $L_p(\mathbb{T}^d_\theta)$ spaces arising from the strongly continuous representation of $\mathbb{T}^d$ and show that the generalized trigonometric system is a Schauder basis in $L_p(\mathbb{T}^d_\theta)$ for $1<p<\infty.$ In particular, we prove that this trigonometric system forms an RUC-basis in $L_p(\mathbb{T}^d_\theta)$ for $2<p<\infty.$
Our results provide a noncommutative counterpart of the classical trigonometric basis in $L_p(\mathbb{T}^d)$. Further, we obtain a weak $(1,1)$ type estimate of partial sum operators associated with noncommutative trigonometric systems. This allows us to study uniformly boundedness of partial sum operators     between pairs of symmetric spaces that do not necessarily possess nontrivial Boyd indices, extending known results in \cite{DDdPS, DdPS, R, R2} to the setting of quasi-Banach symmetric spaces.
\end{abstract}
\maketitle

\section{Introduction}
The notion of a Schauder bases originates from the seminal work of Schauder in 1927 \cite{S1927}, 
where infinite-dimensional Banach spaces were first equipped with coordinate-type representations 
analogous to those in finite-dimensional linear algebra. A sequence $\{x_n\}_{n\ge1}$ in a Banach 
space $X$ is a Schauder basis if every $x\in X$ admits a unique expansion
$$
x=\sum\limits_{n=1}^\infty a_n x_n, \quad a_n \in \mathbb{C},
$$
with convergence in the norm of $X$. Comprehensive treatments of Schauder bases, including unconditionality, and applications to classical Banach function spaces, are given in the two monographs of Lindenstrauss and Tzafriri \cite{LT} (see also \cite{KS, KSa}). These books provide a definitive account of various bases in the classical Lebesgue $L_p$-spaces, emphasizing the central role played by unconditional bases for $1<p<\infty$. 
Within this framework, the trigonometric system occupies a distinguished position. Classical results due to Paley and Zygmund \cite{PZ1932}, Marcinkiewicz \cite{M1936} and Zygmund \cite{MZ1937} show that for $1<p<\infty$ 
the trigonometric system forms a Schauder basis in $L_p(\mathbb T^d)$. This fact underlies a large part of Fourier analysis and summability theory, allowing many 
operator-theoretic questions to be reduced to scalar inequalities for the Fourier coefficients.  
Their contributions remain fundamental for understanding when and how Schauder bases exist and behave in classical settings.

In contrast, while noncommutative $L_p$-spaces 
associated with von Neumann algebras have been widely studied, only a few works \cite{DDdPS3,FS1,FS2,FS3,JNRX2004} 
address the existence and structure of Schauder-type decompositions. 
%{\color{red}General existence results for completely bounded bases in noncommutative $L_p$-spaces, based on the local structure of $\mathscr{COL}_p$-spaces, were obtained in \cite{JNRX2004}.}
Noncommutative torus $\mathbb{T}^d_{\theta}$ (also known as irrational rotation algebra) is a landmark example in 
non-commutative geometry  (see \cite{Connes1980}, \cite[Chapter 12]{green-book} and \cite{CM2014})  and in noncommutative harmonic analysis \cite{CXY,  LMSZ, McDSX, Ri, Rieffel, Spera, Tulenov1, STT, XXY}.  Recently, there has been substantial  body of work on generalising the methods of classical harmonic analysis on the torus to the noncommutative torus.
The noncommutative measure space $(\mathbb{T}^d_{\theta},\tau_{\theta})$ is equipped with a finite normal trace $\tau_{\theta}$ on the von Neumann algebra $\mathbb{T}^d_{\theta},$ which generalizes the Lebesgue integral (recovering it when $\theta = 0$). This allows the construction of noncommutative $L_p$-spaces, denoted $L_p(\mathbb{T}^d_\theta) := L_p(\mathbb{T}^d_\theta, \tau_{\theta})$. When $\theta = 0$, these spaces reduce to the classical $L^p$-spaces on the torus $\mathbb{T}^d.$ To our knowledge, no reference 
explicitly establishes that the trigonometric (Fourier unitary) system forms a Schauder basis of 
$L_p(\mathbb{T}_\theta^d)$, the noncommutative torus, despite its frequent implicit use. 
The present paper fills this gap by proving that a suitable enumeration of the Fourier unitaries
$$
\{e^{\theta}_{\gamma}=U_1^{\gamma_1}\cdots U_d^{\gamma_d}\}_{\gamma\in\mathbb Z^d},
$$
where $U_j$, $j=1,\dots,d$, are generating unitaries of the noncommutative torus $\mathbb T_\theta^d$,
which play the role of the classical exponentials 
$\{e^{2\pi i \gamma t}\}_{\gamma\in\mathbb Z^d}$ on $\mathbb T^d$,
yields a Schauder basis in $L_p(\mathbb T_\theta^d)$ for $1<p<\infty.$
Let $\{r_k\}_{k=1}^{\infty}$ be the usual Rademacher sequence given by
$$
r_k(t) = \operatorname{sign}(\sin(2^{k}\pi t)), \quad 0 \le t \le 1, \quad k\in \mathbb N,
$$
and let $X$ be a Banach space with a Schauder basis $\{x_n\}_{n\ge1}.$ The sequence $\{x_n\}_{n=1}^{\infty}$ is called an RUC-basis if for every
$x = \sum\limits_{n=1}^{\infty} \alpha_n x_n \in X,$
the series
$$
\sum\limits_{n=1}^{\infty} r_n(t)\,\alpha_n x_n
$$
converges for almost all $t \in [0,1]$ (see \cite{DS1997}). It is well known \cite[Chapter~1]{KSa} that if $p\neq2$, then
$L_p[0,1]$ admits no uniformly bounded orthonormal unconditional basis.
Motivated by this phenomenon, we prove that the trigonometric system $\{e^{\theta}_{\gamma}\}_{\gamma\in\mathbb Z^d}$ forms an RUC-bases in noncommutative $L_p(\mathbb{T}^d_\theta)$ spaces for $2<p<\infty$.
This notion has its origin in the general theory of orthonormal systems in function
spaces, where it was established long time ago (see \cite[p.~47]{O1975}) that if an
orthonormal system $\{e_n\}_{n\ge1}$ with
$$
\sup_{n\geq 1}\|e_n\|_{L_p[0,1]}<\infty, \quad 2\leq p<\infty,
$$
forms a Schauder basis of $L_p[0,1]$, then it is also an RUC-basis in $L_p[0,1]$. 
A classical illustration is given by the
trigonometric and Walsh--Paley systems \cite{BKPS}. Taken in their natural ordering,
these systems form bases of $L_p[0,1]$, $1<p<\infty$, which are unconditional
if and only if $p=2$ \cite[Theorem 1.d.6]{LT}. On the other hand, they form
RUC-bases in $L_p[0,1]$ for all $2<p<\infty$
\cite[Remark~5]{BKPS}, see also \cite{SWS}.
Further examples arise in noncommutative settings. Pisier showed
\cite[Theorems 3.1 and~3.7]{BKPS} that if $2<p<\infty$, then the family of natural matrix units $\{e_{mn}\}_{m,n\geq 1}$, taken in the rectangular ordering, forms an RUC-basis 
in the Schatten classes $C_p$ for $2<p<\infty$ (see also \cite[Theorem 5(a)]{LP}). 
 While
unconditionality always implies the RUC property, the trigonometric system
shows that RUC systems may exist far beyond the unconditional framework
(see \cite[Corollary~1.4 and Remark~V following Corollary~2.2]{BKPS}).
Further progress on RUC-bases in symmetric spaces of functions and noncommutative settings were made in \cite{ACT, DSS2002, DS1997, S1994, S1998}.

Most existing results on Schauder bases in symmetric function and operator spaces are obtained under the assumption that the underlying spaces have nontrivial Boyd indices. It is well known that, the trigonometric system forms a Schauder basis in a symmetric space if and only if the space has nontrivial Boyd indices \cite{DDdPS3, FS1,FS2,FS3, LT}, a condition that guarantees the uniform boundedness of the associated basis projections via interpolation methods. 
In contrast, the more general situation, where the underlying symmetric spaces may have trivial Boyd indices has been less explored. In this paper, we investigate this general framework by studying the mapping properties of partial sum operators and Hilbert transforms between pairs of symmetric spaces that do not necessarily admit nontrivial Boyd indices. Our results extend those obtained in \cite{DDdPS, DdPS, R, R2} to the broader setting of quasi-Banach symmetric spaces.

As a first step, we establish a uniform weak-type $(1,1)$ estimate for the partial sums associated with the corresponding noncommutative trigonometric systems. We then analyze the relationship between these partial sum operators and the Hilbert transform $\mathcal H_{\mathbb Z^d_+}$ (see \eqref{HA}) associated with the positive cone $\mathbb Z^d_+$. This operator $H_{\mathbb Z^d_+}$ may be realized as a particular instance of the Hilbert transform introduced in \cite[Definition~4.1]{R2}, corresponding to the compact abelian group $G=\mathbb T^d$ and the von Neumann algebra $\mathcal M=\mathbb T^d_\theta$.

Consequently, the boundedness of $\mathcal H_{\mathbb Z^d_+}$ on $L_p(\mathbb T^d_\theta)$ for $1<p<\infty$, together with its weak-type $(1,1)$ estimate, follows from \cite[Theorem~4.2]{R2} and \cite[Corollary~4.6]{R2}. When combined with \cite[Theorem~14]{STZ1}, these results enable us to study the boundedness of the Hilbert transform and the uniform boundedness of the partial sums of the associated noncommutative Schauder basis acting between noncommutative symmetric spaces $\mathcal E(\mathbb T^d_\theta)$ and $\mathcal F(\mathbb T^d_\theta)$, thereby extending and unifying earlier results in this direction \cite{ABG, BGM, DDdPS, R, R2, Z} in the special case $\mathcal{M}=\mathbb{T}_\theta^d$.

\section{Preliminaries}

\subsection{Symmetric (quasi-)Banach function spaces}

Let $S(0,1)$ denote the space of all measurable real-valued functions on the interval $(0,1)$ with the Lebesgue measure $m$ and identify functions that agree almost everywhere. For a function $f \in S(0,1)$, its decreasing rearrangement $\mu(f)$ is defined by
$$
\mu(t,f) = \inf\{ s \geq 0 : m(\{|f| > s\}) \leq t \}, \quad t > 0.
$$

For $1\leq p \leq \infty$, let $L_{p}(0,1)$ represent the classical Lebesgue space of $p$-integrable functions (or essentially bounded functions in the case $p=\infty$). The Lorentz space $L_{p,q}(0,1)$, for $1\leq  p, q \leq \infty$, consists of all measurable functions $f\in S(0,1)$ for which the following quasi-norm is finite:
\begin{equation*}
    \|f\|_{L_{p,q}(0,1)}=\left\{ \begin{array}{rcl}
         \left(\int\limits_{0}^1\left(t^{\frac{1}{p}}\mu(t,f)\right)^q\frac{dt}{t}\right)^{\frac{1}{q}}, & \mbox{for}
         & q<\infty, \\ \sup\limits_{t>0}t^\frac{1}{p}\mu(t,f),\;\;\;\;\;\;\;\;\;\;\;\; & \mbox{for} & q=\infty,  
                \end{array}\right. 
\end{equation*}
For a comprehensive treatment of these spaces, we refer the reader to \cite{G2008}.

\begin{definition}\label{Sym}
A space $(E(0,1),\|\cdot\|_{E(0,1)})$ is called a symmetric (quasi-)Banach function space on $(0,1)$ if it satisfies the following axioms:
\begin{enumerate}[{\rm (a)}]
    \item $E(0,1)$ is a linear subspace of $S(0,1)$;
    \item $(E(0,1),\|\cdot\|_{E(0,1)})$ forms a complete (quasi-)normed space;
    \item For any $f \in E(0,1)$ and $g \in S(0,1)$ with $\mu(g) \leq \mu(f)$, it follows that $g \in E(0,1)$ and $\|g\|_{E(0,1)} \leq \|f\|_{E(0,1)}$.
\end{enumerate}
\end{definition}

Classical examples of Banach and quasi-Banach symmetric spaces include the Lebesgue spaces $L_{p}(0,1)$ and the Lorentz spaces $L_{p,q}(0,1)$ for $1\leq  p, q \leq \infty$, respectively.

We say that a function $g\in S(0,1)$ is \emph{submajorized} by $f\in S(0,1)$ in the sense of Hardy--Littlewood--Pólya (denoted $\mu(g) \prec\prec \mu(f)$) if
$$
\int\limits_{0}^t \mu(s,g) \, ds \leq \int\limits_{0}^t \mu(s,f) \, ds, \,\ t \geq 0.
$$
For $s>0$ consider
the dilation operator $D_s$ on $S(0,1)$ defined as
$$(D_s (f) )(t)=\begin{cases} f(t/s),& \text{if}\;\; 0\leq t\leq \min\{1,s\},\\
0,& \text{otherwise}.\\
\end{cases}$$
The \textit{Boyd indices} of a symmetric Banach function space $ E(0,1) $ are defined by 
$$
\alpha_E = \liminf_{0 < s < 1} \frac{\log \|D_s\|_{E \to E}}{\log s} = \lim_{s \to 0^+} \frac{\log \|D_s\|_{E \to E}}{\log s}.
$$
$$
\beta_E = \limsup_{s > 1} \frac{\log \|D_s\|_{E \to E}}{\log s} = \lim_{s \to \infty} \frac{\log \|D_s\|_{E \to E}}{\log s},
$$
In general, $ 0 \le \alpha_E \le \beta_E \le 1 $. We say that the Boyd indices are \textit{non-trivial} if $ 0 < \alpha_E \le \beta_E < 1 $.
The general theory and definitions of symmetric (quasi-)Banach function spaces can be found in \cite{BS, KPS} and \cite{LT}.
\subsection{Noncommutative  torus}  For $d\geq 1$ we denote the $d$-torus by $\mathbb{T}^{d}$ defined by
$$
\mathbb{T}^{d} = \{(t_{1}, \ldots, t_{d}): |t_{j}| = 1,\ t_{j} \in \mathbb{C},\ 1 \leq j \leq d\}.
$$
The torus is equipped with the usual topology and multiplicative group law, i.e. for $t = (t_{1}, \ldots, t_{d}) \in \mathbb{T}^{d}$ and $w = (w_{1}, \ldots, w_{d}) \in \mathbb{T}^{d}$,
$$
t \cdot w = (t_{1}w_{1}, \ldots, t_{d}w_{d}).
$$
For $\gamma = (\gamma_{1}, \ldots, \gamma_{d}) \in \mathbb{Z}^{d}$ and $t = (t_{1}, \ldots, t_{d}) \in \mathbb{T}^{d}$, define
$$
t^{\gamma} = t_{1}^{\gamma_{1}} \cdots t_{d}^{\gamma_{d}}.
$$
Given $d \geq 2$ and let $\theta = (\theta_{jk})_{1 \leq j,k \leq d}$ be a real antisymmetric matrix.
Let $C(\mathbb{T}^d_{\theta})$ be a universal $*$-algebra generated by unitaries $\{U_k\}_{k=1}^d$ 
satisfying the conditions
$$
U_{j} U_{k} = e^{i\theta_{j k}} U_{k} U_{j}, \quad 1 \leq j, k \leq d.
$$
For $\gamma = (\gamma_{1}, \ldots, \gamma_{d}) \in \mathbb{Z}^{d},$ we define 
$$ 
e^{\theta}_{\gamma} = U_{1}^{\gamma_{1}} \cdots U_{d}^{\gamma_{d}}.
$$
Note that 
\begin{equation}\label{1}
    e_{\gamma}^\theta e_{\eta}^\theta = e^{-i \sum\limits_{j<k} \theta_{jk} {\eta}_j {\gamma}_k} e_{\gamma+\eta}^{\theta},
\quad
(e_{\gamma}^\theta)^* = e^{-i \sum\limits_{j<k} \theta_{jk} {\gamma}_j {\gamma}_k} e_{-\gamma}^{\theta}.
\end{equation}
A polynomial in $C(\mathbb{T}^{d}_{\theta})$ is a finite sum \begin{eqnarray*}x= \sum\limits_{\gamma\in\mathbb{Z}^d}\alpha(\gamma) e^\theta_{\gamma},\quad\alpha(\gamma)\in\mathbb{C},\quad \gamma\in\mathbb{Z}^d,    \end{eqnarray*}
that is, $\alpha(\gamma) = 0$ for all but finite indices $\gamma\in\mathbb{Z}^d.$ The involution algebra $\mathcal{P}_{\theta}$ of all such polynomials is dense in $C(\mathbb{T}^{d}_{\theta})$.
Define a linear functional $\tau_\theta : C(\mathbb{T}^d_{\theta}) \to \mathbb{C}$ by setting
$$
\tau_\theta (e^{\theta}_{\gamma}) = 0 \quad \text{unless } \gamma = (\gamma_1, \ldots, \gamma_d) = 0, \ \mathbf{\gamma} \in \mathbb{Z}^d,
$$
and
$$
\tau_\theta (e^{\theta}_{\gamma}) = 1 \quad \text{when } \gamma = (\gamma_1, \ldots, \gamma_d) = 0, \ \gamma \in \mathbb{Z}^d.
$$
It can be demonstrated that $\tau_\theta$ is positive, that is, 
$\tau_\theta (x^* x) \geq 0$ for $x \in C(\mathbb{T}^d_{\theta})$ (see, \cite[p. 190]{LMSZ}).
We equip the linear space $C(\mathbb{T}^d_{\theta})$ with an inner product defined by the formula
$$
\langle x, y \rangle = \tau_\theta (y^*x), \quad x, y \in C(\mathbb{T}^d_{\theta}).
$$
The action $\lambda$ of $C(\mathbb{T}^d_{\theta})$ on the pre-Hilbert space $(C(\mathbb{T}^d_{\theta}), \langle\cdot, \cdot\rangle)$ 
by left multiplication extends to a representation of $C(\mathbb{T}^d_{\theta})$ as bounded operators 
on the completed Hilbert space $L_2(\mathbb{T}_\theta^d)$.

The closure in the weak operator topology of $\lambda(C(\mathbb{T}^d_{\theta}))$ 
in $B(L_2(\mathbb{T}_\theta^d))$ is denoted by $\mathbb{T}_\theta^d$, 
and $\tau_\theta$ extends to a faithful normal tracial state on $\mathbb{T}_\theta^d.$ 

 This algebra $\mathbb{T}^{d}_{\theta},$ is referred to as the $d$-dimensional noncommutative (or quantum) torus. Note that the von Neumann algebra $\mathbb{T}^{d}_{\theta}$ is hyperfinite \cite[p. 759]{CXY}. For more details on the noncommutative torus, refer to \cite{CXY, LMSZ, McDSX, XXY}.

\subsection{Noncommutative spaces }
Let $B(H)$ denote the algebra of all bounded linear operators in a Hilbert space $H$. Let $\mathcal M \subset B(H)$ be a finite von Neumann algebra equipped with a faithful, normal, finite trace $\tau$. A closed, densely defined operator $x:{\rm dom}(x) \to H$ is said to be \emph{affiliated} with $\mathcal{M}$ if $xu=ux$ for each unitary operator $u$ in the commutant $\mathcal{M}'$ of $\mathcal{M}$.

An operator $x$ affiliated with $\mathcal{M}$ is called $\tau$\emph{-measurable} if for every $\varepsilon>0$ there exists a projection $P\in\mathcal{M}$ such that $P(H) \subset {\rm dom}(x)$ and $\tau(1-P)\leq \varepsilon.$ We denote by $S(\mathcal M)$ the set of all $\tau$-measurable operators.
For $x\in S(\mathcal{M})$, the distribution function $d(s,|x|)$ is defined by
$$
d(s,|x|)=\tau( \chi_{(s,\infty)}(|x|) ), \quad s\ge0,
$$
where $\chi_{(s,\infty)}(|x|)$ is the spectral projection of  $|x|=(x^*x)^{1/2}$ corresponding to the interval $(s,\infty)$.
For $x \in S(\mathcal M)$ the decreasing rearrangement (or 
generalized singular value function) of $x$ is the function 
$\mu(x) : t \mapsto \mu(t,x)$ defined by
$$
\mu(t,x) = \inf\{\, s > 0 : d(s,|x|)
   \leq t \,\}, \quad t\geq 0.
$$
Using the properties of the decreasing rearrangement $\mu(x)$ and its
right–continuous inverse, we have
\begin{equation}\label{d}
    d(s,|x|)=m\{t\ge0:\mu(t,x)>s\}, \quad s\ge0,
\end{equation}
where $m$ denotes the Lebesgue measure on $(0,\infty)$ (see \cite[Chapter~III, pp.~129–130]{DdPS}).

\begin{definition}\label{NC Sym}
A linear subset $\mathcal{E} \subset S(\mathcal{M})$ equipped with a complete quasi-norm $\|\cdot\|_{\mathcal{E}}$ is called a quasi-Banach symmetric operator space if, for every $x \in \mathcal{E}$ and every $y \in S(\mathcal{M})$ satisfying $\mu(y) \leq \mu(x)$, this implies $y \in \mathcal{E}$ and $\|y\|_{\mathcal{E}} \leq \|x\|_{\mathcal{E}}$.
\end{definition}
In the case where the von Neumann algebra is commutative, specifically $\mathcal{M}=L_{\infty}(0,1)$,
the construction of a symmetric operator space coincides with a symmetric function space
on $(0,1)$. The construction of noncommutative symmetric operator spaces proceeds as follows. Given a symmetric (quasi-)Banach function space $E$ on $(0,1)$, we define
$$
\mathcal{E}(\mathcal{M}) = \{ x \in S(\mathcal{M}) : \mu(x) \in E \}
$$
and endow it with the natural (quasi-)norm
$$
\|x\|_{\mathcal{E}(\mathcal{M})} := \|\mu(x)\|_E, \quad x \in \mathcal{E}(\mathcal{M}).
$$
Then $(\mathcal{E}(\mathcal{M}), \|\cdot\|_{\mathcal{E}(\mathcal{M})})$ forms a (quasi-)Banach space, called the \emph{noncommutative symmetric (quasi-)Banach operator space} associated with $(\mathcal{M},\tau)$ corresponding to $(E,\|\cdot\|_{E})$ \cite{KS, F}.

As established in \cite{DdPS, KS, F} (see also \cite[Section 3]{LSZ}), the mapping
$$
(E, \|\cdot\|_{E}) \longleftrightarrow (\mathcal{E}(\mathcal{M}), \|\cdot\|_{\mathcal{E}(\mathcal{M})})
$$
defines a one-to-one correspondence between symmetric operator spaces in $S(\mathcal{M})$ and symmetric function spaces in $S(0,1)$.
For further details on the theory of noncommutative (quasi-)Banach symmetric spaces over general semifinite von Neumann algebras, we refer to \cite{ DdPS, LSZ, PXu}.

The family $\{e^{\theta}_{\gamma}\}_{\gamma \in \mathbb{Z}^{d}}$ forms an orthonormal basis in $L_{2}(\mathbb{T}_{\theta}^{d})$ (see \cite{CXY,  Tulenov1}), satisfying the following properties:
\begin{equation}\label{orthogonal-relations}
\begin{split}
&\tau_{\theta}((e^{\theta}_{\gamma})^{*}e^{\theta}_{\eta}) = \delta_{\gamma,\eta}, \\
&\|e^{\theta}_{\gamma}\|_{L_{\infty}(\mathbb{T}^{d}_{\theta})} = \|(e^{\theta}_{\gamma})^{*}\|_{L_{\infty}(\mathbb{T}^{d}_{\theta})} = 1,
\end{split}
\end{equation}
where $\delta_{\gamma,\eta}$ denotes the Kronecker delta. Furthermore, the inclusion $L_{q}(\mathbb{T}^{d}_{\theta}) \subset L_{p}(\mathbb{T}^{d}_{\theta})$ is valid for all $1\leq  p \leq q \leq \infty$.

\begin{definition}\cite[Definition 1.a.1]{LT} 
A sequence $\{x_k\}_{k=1}^\infty$ in a Banach space $X$ is called a \emph{Schauder basis} of $X$ if for every $x\in X$ there exists a unique sequence of scalars $\{a_k\}_{k=1}^\infty\subset \mathbb C$ such that
$$
x=\sum\limits_{k=1}^\infty a_k x_k.
$$
%A sequence $\{x_n\}_{n=1}^\infty$ which is a Schauder basis of its closed linear span is called a basic sequence.
\end{definition}

\begin{proposition}\cite[Proposition 1.a.3]{LT}\label{Schauder}
    A sequence $ \{x_k\}_{k=1}^{\infty}$ in a Banach space $X$ is Schauder basis if and only if the following conditions are satisfied:
\begin{enumerate}
    \item $ x_k \neq 0 $ for every $ k \in \mathbb{N} $,
    \item $ [x_k]_{k=1}^{\infty} = X $ (where $[x_k]$ the closed linear span of $ \{x_k\} $ in $ X $),
    \item There exists a constant $ c(X)>0$, depending only on the space $ X $, such that for all natural numbers $ n_1 \leq n_2 $ and for all sequences $ \{a_k\}_{k=1}^{n_2} \subset \mathbb{C} $, the following inequality holds:
    $$
    \left\| \sum\limits_{k=1}^{n_1} a_k x_k \right\|_X \leq c(X) \left\| \sum\limits_{k=1}^{n_2} a_k x_k \right\|_X.
    $$
\end{enumerate}
\end{proposition}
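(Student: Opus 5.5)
The statement is the standard characterization of Schauder bases from Lindenstrauss--Tzafriri. I would prove the two directions separately, using the partial sum projections $P_n$ as the central objects.

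\emph{Necessity.} Suppose $\{x_k\}$ is a Schauder basis. Condition (1) follows from uniqueness of expansions: if some $x_k=0$, then $0=0\cdot x_k=1\cdot x_k$ gives two distinct expansions of $0$. Condition (2) holds because every $x$ is a norm limit of finite linear combinations of the $x_k$.

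For (3), I would introduce the space
$$\mathcal{Y}=\Bigl\{a=\{a_k\}:\ \sum_k a_k x_k \text{ converges}\Bigr\},\qquad |||a|||=\sup_n\Bigl\|\sum_{k=1}^n a_k x_k\Bigr\|_X.$$
One checks that $(\mathcal{Y},|||\cdot|||)$ is a Banach space. Completeness uses a Cauchy argument: Cauchy in $|||\cdot|||$ gives uniform convergence of the partial sums, and hence coordinatewise limits whose partial sums converge. This step is routine but needs care and is the main technical point.

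The map $T:\mathcal{Y}\to X$, $T(a)=\sum_k a_kx_k$, is bounded with $\|T\|\le1$. It is surjective because $\{x_k\}$ is a basis, and injective by uniqueness of expansions. By the open mapping theorem, $T^{-1}$ is bounded, so $|||a|||\le c\,\|Ta\|$. Applying this to finitely supported $a=(a_1,\dots,a_{n_2},0,\dots)$ gives $\|\sum_{k\le n_1}a_kx_k\|\le c\,\|\sum_{k\le n_2}a_kx_k\|$, which is (3).

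\emph{Sufficiency.} Assume (1)–(3).

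First, (3) together with (1) gives uniqueness of coefficients on the linear span. If $\sum_{k\le n}a_kx_k=0$, then (3) forces every partial sum to vanish. Successive differences then give $a_kx_k=0$, and since $x_k\ne0$, we get $a_k=0$.

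Next, define $P_n$ on $\operatorname{span}\{x_k\}$ by $P_n(\sum a_kx_k)=\sum_{k\le n}a_kx_k$. By (3), $\|P_n\|\le c$, so each $P_n$ extends by density (2) to a bounded operator on $X$ with the same bound. The extensions still satisfy $P_nP_m=P_{\min(n,m)}$ and $\dim P_nX\le n$.

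Then, for each $x\in X$, $P_nx\to x$. This holds on the dense span because $P_n$ eventually fixes each element. It extends to all of $X$ by an $\varepsilon/3$ argument using $\sup_n\|P_n\|\le c$.

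Now $(P_n-P_{n-1})x$ lies in $\operatorname{span}\{x_n\}$, so it equals $a_nx_n$ for a unique scalar $a_n$ (unique because $x_n\ne0$). Hence $x=\sum a_nx_n$.

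Finally, for uniqueness of the expansion in $X$, suppose $x=\sum b_kx_k$. Applying the continuous $P_m$ to the partial sums and using $P_m(\sum_{k\le N}b_kx_k)=\sum_{k\le \min(m,N)}b_kx_k$ gives $P_mx=\sum_{k\le m}b_kx_k$. Therefore $b_k=a_k$.

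The main obstacle is the completeness of $\mathcal{Y}$ in the necessity direction, since it is what allows the open mapping theorem to be applied. Everything else is bookkeeping with the projections.
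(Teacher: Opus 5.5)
Your proof is correct and is essentially the standard Lindenstrauss--Tzafriri argument that the paper relies on by citation; the paper itself gives no proof. Necessity goes through the space $\mathcal{Y}$ normed by $\sup_n\|\sum_{k\le n}a_kx_k\|$ and the open mapping theorem (their Proposition 1.a.2), and sufficiency goes through the uniformly bounded partial-sum projections $P_n$ extended by density. One small point to make explicit: both the definiteness of $|||\cdot|||$ and the coordinatewise convergence in your completeness argument use $x_k\neq 0$, which you have already established as condition (1).
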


 \begin{definition}\cite[Definition 1.2.15]{HNVW}
Let $X$ be a Banach space and let $1 \leq p < \infty$.  
The Lebesgue--Bochner space $L^p(\mathbb{T},X)$ consists of all (equivalence classes of) strongly measurable functions 
$$
f : \mathbb{T} \to X
$$
such that
$$
\|f\|_{L^p(\mathbb{T},X)} 
= \Big( \int\limits_{\mathbb{T}} \|f(t)\|_X^p \, dt \Big)^{1/p} < \infty,
$$
where $dt$ is the
normalised Haar measure on $\mathbb{T}$.  

%The $\ell_p$-space of $X$-valued sequences is defined by
%$$\ell_p(X) = \Big\{ (x_k)_{k \in \mathbb{Z}} \; : \; x_k \in X,\; \sum\limits_{k\in \mathbb{Z}}\| x_k \|_X^{p} < \infty \Big\},$$
%equipped with the norm
%$$\big\| (x_k) \big\|_{\ell_p(X)} = \Big( \sum\limits_{k=1}^{\infty} \| x_k \|_X^{p} \Big)^{1/p}.$$
\end{definition}  
For every $f \in L_p(\mathbb{T},X)$ and $\psi \in L_1(\mathbb{T})$ the function
\begin{equation}\label{*}
   (f * \psi )(s) = \int\limits_{\mathbb{T}} \psi (t)\,f(st^{-1})\,dt, \quad s \in \mathbb{T}, 
\end{equation}
where the integral is understood as the Bochner integral in $L_p(\mathbb{T},X)$.  
This function is called the \emph{convolution} of $f$ and $\psi $. Moreover, the following norm estimate holds:
\begin{equation}\label{Young}
    \|f * \psi \|_{L_p(\mathbb{T},X)} \leq \|f\|_{L_p(\mathbb{T},X)}\,\|\psi \|_{L_1(\mathbb{T})}, \, f\in L_p(\mathbb{T},X), \, \psi\in L_1(\mathbb{T}),
\end{equation}
see \cite[p.227]{P}  and \cite{CXY} for more details.

For a function $f\in L_p(\mathbb T, X)$, $1\le p<\infty$, the
$k$-th Fourier coefficient of $f$ is defined by
\begin{equation}\label{F-coef}
\widehat{f}(k)
    = \int\limits_{\mathbb{T}} f(t)\, t^{-k}\, dt,
    \quad k\in\mathbb Z,
\end{equation}
where the integral is understood in the Bochner sense.

\begin{definition} Let $X$ be a Banach space and $1 < p < \infty$.  We say that $X$ has the UMD property (is a UMD space) if there exists a constant $C_{p,X} > 0$ such that for every finite martingale difference sequence $\{d_k\}_{k=1}^n$ in $L_p(\mathbb{T}, X)$ and for every choice of signs $\varepsilon_k \in \{-1,1\}$ one has
$$
\Big\| \sum\limits_{k=1}^n \varepsilon_k d_k \Big\|_{L_p(\mathbb{T},X)}
\leq C_{p,X} \, \Big\| \sum\limits_{k=1}^n d_k \Big\|_{L_p(\mathbb{T},X)}.
$$  
\end{definition}
In particular, for each $1<p<\infty$, the noncommutative space $L_p(\mathbb T_\theta^d)$ has the UMD property (see \cite[Corollary~4.2]{DDdPS}).

\begin{definition}\cite[Definition 4.2.1]{HNVW}
Let $X$ be a Banach space and $ 1 < p < \infty $. The periodic Hilbert transform on $ L_p(\mathbb{T}, X) $ is defined by
\begin{equation}\label{H_X}
    (H_X f)(s) = \lim_{\varepsilon \to 0^+} \frac{1}{2\pi} \int\limits_{\varepsilon \le |t| \le \pi} \cot\left(\frac{t}{2}\right) f(s - t)  dt,
\end{equation}
for $ f \in L_p(\mathbb{T}, X) $, where the limit in the $ L_p(\mathbb{T}, X) $-norm and almost everywhere.
\end{definition}

\begin{theorem}\cite{B}\label{HUMD}
For a Banach space $ X $, the following are equivalent:
\begin{enumerate}
    \item $ X $ has the UMD property.
    \item The Hilbert transform $ H_X $ is a bounded linear operator on $ L_p(\mathbb{T}, X) $ for all $ p \in (1, \infty) $.
\end{enumerate}
\end{theorem}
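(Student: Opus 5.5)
Theorem~\ref{HUMD} is the classical equivalence of Burkholder and Bourgain. I would prove the two implications separately, reducing everything to dyadic (Paley--Walsh) martingales. For that reduction I use the standard facts that the UMD property is independent of $p\in(1,\infty)$ and that it suffices to test it on Paley--Walsh martingales.

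\emph{(1)$\Rightarrow$(2).} I would follow Burkholder's stochastic route and view the Hilbert transform as a martingale transform.
\begin{enumerate}
\item For a trigonometric polynomial $f$ with values in $X$, let $u$ be its harmonic (Poisson) extension to the disc.
\item Run a planar Brownian motion $B$ from $0$ until it exits the disc. Then $M_t=u(B_t)$ is a continuous $X$-valued martingale.
\item The harmonic conjugate gives $\tilde u(B_t)=\int_0^t A\,\nabla u(B_s)\cdot dB_s$, where $A$ is rotation by $\pi/2$. So $\tilde u(B)$ is a martingale transform of $M$ by a predictable orthogonal (hence contractive) matrix.
\item Burkholder's inequality for such transforms of continuous $X$-valued martingales follows from UMD by discretising the stochastic integrals. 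It gives $\|\tilde u(B_\tau)\|_{L_p(X)}\le C\|u(B_\tau)\|_{L_p(X)}$.
\item The exit distribution of $B_\tau$ is normalised Haar measure on $\mathbb{T}$, and the boundary value of $\tilde u$ is $H_X f$. Hence $\|H_Xf\|_{L_p(\mathbb{T},X)}\le C_{p,X}\|f\|_{L_p(\mathbb{T},X)}$.
\item Density of $X$-valued polynomials in $L_p(\mathbb{T},X)$ then gives (2).
\end{enumerate}
As a fallback that avoids stochastic calculus, I would use a Petermichl/Figiel-type representation of $H_X$ as an average of dyadic shifts on $\mathbb{T}$. Each shift is a composition of Haar martingale transforms and is bounded by UMD.

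\emph{(2)$\Rightarrow$(1).} I would follow Bourgain's transference argument.
\begin{enumerate}
\item Boundedness of $H_X$ is equivalent to boundedness of the Riesz projection $P_+$, since $P_+=\tfrac12(I+iH_X)+$(mean term). By tensoring, $P_+$ acts on each variable of $L_p(\mathbb{T}^n,X)$ with the same bound.
\item Take a Paley--Walsh martingale $f_n=\sum_{k\le n}d_k$ with $d_k=\varepsilon_k\,\phi_k(\varepsilon_1,\dots,\varepsilon_{k-1})$.
\item Replace the Rademachers by functions on $\mathbb{T}^n$. I would use $d_k\mapsto \phi_k(\cdot)\,\cos(N_k t_k)$-type expressions, or Bourgain's variables $t_k\mapsto t_k+N_k s$ with a common variable $s$ and rapidly increasing $N_k$. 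The point is that the $k$-th difference then has frequency in $s$ close to $\pm N_k$, separated from the frequencies of earlier terms.
\item With this separation, a combination of Riesz projections in $s$ and translations (a "generalized Riesz" or lacunary-block argument) implements any sign pattern $\pm d_k$. This yields $\|\sum\pm d_k\|\le C\|\sum d_k\|$.
\item A distributional comparison shows the transferred functions have the same $L_p$ norms, up to constants, as the original martingales. The frequencies need not be exactly separated, and I would control the error by choosing the $N_k$ lacunary enough.
\end{enumerate}

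The main obstacle is step 3--4 of the second implication: arranging the lacunary frequencies so that the sign transforms are exactly (or approximately, with summable errors) Fourier multipliers built from $P_+$, while the change of variables preserves $L_p(X)$-norms. In the first implication the only delicate point is transferring UMD to continuous-time stochastic integrals. That is routine via discretisation and Doob's inequality.
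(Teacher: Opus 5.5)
The paper gives no proof of Theorem~\ref{HUMD}. It quotes the result from \cite{B} and uses only (1)$\Rightarrow$(2), in Proposition~\ref{lem1}, for $X=L_p(\mathbb T^d_\theta)$. You invoke the right classical routes (Burkholder, Bourgain). However, the step that carries each direction is either missing or rests on a justification that fails.

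\emph{The gap in (2)$\Rightarrow$(1).} Your claim that Riesz projections in $s$ plus translations ``implement any sign pattern'' does not work.
After $t_k\mapsto t_k+N_ks$, a Riesz projection or $H$ in $s$ sees only the sign of a frequency, so it treats every block in the same way. A modulated projection $e^{iMs}P_+e^{-iMs}$ has one cut point. Realising $(\epsilon_k)$ this way therefore costs one projection per sign change, and the constant grows with the number of sign changes. A translation in $s$ is a translation in $t$, so it changes the coefficients $\phi_k$ along with the signs.

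The missing idea is Bourgain's reflection argument.
\begin{enumerate}
\item Realise the Paley--Walsh martingale as $F=\sum_k\phi_k\,g(t_k)$. Here $g$ is real, even and of mean zero (Fej\'er means of $\operatorname{sgn}\cos t$), and each $\phi_k$ depends on $t_1,\dots,t_{k-1}$ only through $g(t_1),\dots,g(t_{k-1})$.
\item Take $N_k$ larger than the degree times $\sum_{j<k}N_j$. Then every $s$-frequency of the $k$-th transferred term has the sign of its $t_k$-frequency. So $H$ in $s$ gives $\|\sum_k\phi_k\tilde g(t_k)\|_p\le\|H_X\|\,\|F\|_p$, with $\tilde g$ odd.
\item The measure-preserving map $t_k\mapsto\epsilon_kt_k$ fixes every $\phi_k$ and turns this sum into $\sum_k\epsilon_k\phi_k\tilde g(t_k)$.
\item A second application of $H$ returns $-\sum_k\epsilon_k\phi_k g(t_k)$.
\item Letting the Fej\'er means tend to $\operatorname{sgn}\cos$ gives $C_{p,X}\le\|H_X\|^2$.
\end{enumerate}
Everything is a trigonometric polynomial until that last limit, so no approximate frequency separation or error control is needed.

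\emph{The gap in (1)$\Rightarrow$(2).} Step~4 is the real difficulty, not a routine discretisation. Write $\phi=\partial_1u$ and $\psi=\partial_2u$; both are $X$-valued. The increments of $u(B)$ are $\phi\,\Delta B^1+\psi\,\Delta B^2$, while those of $\tilde u(B)$ are $-\psi\,\Delta B^1+\phi\,\Delta B^2$. The transform swaps the $X$-valued coefficients between the two Gaussian components. Even in the refined filtration that reveals $\Delta B^1$ and $\Delta B^2$ separately, it is not of the form $\sum_k a_kd_k$ with scalar predictable $a_k$. That form is all that UMD controls. To close the gap you need two-sided decoupling in UMD spaces (Garling, McConnell): pass to an independent copy $\widetilde B$, rotate $\widetilde B$ (which preserves its law), and decouple back.

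Your fallback is closer to workable, but the stated reason is wrong. A Petermichl shift $h_I\mapsto(h_{I_-}-h_{I_+})/\sqrt2$ is not diagonal in the Haar basis, so it is not a composition of martingale transforms. It multiplies the $k$-th Haar difference by the next-generation Rademacher function. Its $L_p(X)$-bound follows from UMD, applied in the shifted filtration, together with the sign invariance of Rademacher sums; the bound is at most $C_{p,X}^2$. That representation is also proved on $\mathbb R$, so you additionally need the standard transference to the periodic $H_X$.
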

For the periodic Hilbert transform $H_X$ on $L_p(\mathbb{T}, X)$, the Fourier coefficients in terms of \eqref{F-coef} are given by
\begin{equation}\label{HT}
\widehat{H_X f}(k) = -i\operatorname{sgn}(k) \hat{f}(k), \quad k \in \mathbb{Z}.
\end{equation}
%see \cite[p. 389]{HNVW}.

For each $t=(t_1,\dots,t_d)\in\mathbb T^d$, we define
\begin{equation}\label{Pit}
  \pi_t(e_{\gamma}^\theta)
 = t^{\gamma} e_{\gamma}^\theta
 = t_1^{\gamma_1}\cdots t_d^{\gamma_d} U_1^{\gamma_1}\cdots U_d^{\gamma_d},
 \quad \gamma \in\mathbb{Z}^d. 
\end{equation}
Then $\{\pi_t\}_{t\in\mathbb T^d}$ is a strongly continuous representation of $\mathbb T^d$
by trace–preserving $*$-automorphisms of $\mathbb T^d_\theta$ (see \cite[p.760]{CXY}, \cite[p.1237]{McDSX}). In particular, for every $0<p\le\infty$, $\pi_t$ extends to an isometry on $L_p(\mathbb T^d_\theta)$, so that
\begin{equation}\label{PI}
    \|\pi_t(x)\|_{L_p(\mathbb T^d_\theta)}=\|x\|_{L_p(\mathbb T^d_\theta)},\quad x\in L_p(\mathbb T^d_\theta).
\end{equation}
%and $(\pi_t)^{-1}=\pi_{t^{-1}}$.

We recall some terminology from the theory of group representations (see \cite{BGM, FS1, FS2, FS3, FS4}). Let $G$ be a compact abelian group with normalized Haar measure $dt$, and let $X$ be a Banach space equipped with a strongly continuous representation $\{R_t\}_{t\in G}$. Let $\widehat G$ denote the dual group of continuous characters $\gamma: G \to \mathbb{T}$. For each $\gamma \in \widehat G$ we define
$$
E_\gamma x = \int\limits_G \gamma(t)\,R_{-t}(x)\,dt,\quad x\in X,
$$
where the integral is understood as a Bochner integral. 

For each $\gamma \in \widehat{G}$, we define the eigenspace of a strongly continuous representation $\{R_t\}_{t\in G}$ corresponding to the character $t \mapsto \gamma(t)$ by
\begin{equation}\label{X_g}
    X_\gamma = \{x\in X : R_t(x)=\gamma(t)\,x \ \text{for all } t\in G\}.
\end{equation}
\begin{definition}\cite{ FS2}
 Let $\{R_t\}_{t\in \mathbb T^d}$ be a strongly continuous representation of the group $\mathbb T^d$ on a Banach space $X$ such that all its eigenspaces are at most one-dimensional. Set $U = \{\gamma \in \mathbb Z^d : X_\gamma \neq \{0\}\},$
and for each $\gamma \in U$ choose a non-zero vector $x_\gamma \in X_\gamma$.
The family 
\begin{equation}\label{gts}
    \{x_\gamma\}_{\gamma\in U}
\end{equation}
is called a generalized
trigonometric system in $X$ (associated with the representation
$\{R_t\}_{t\in \mathbb T^d}$).   
\end{definition}

\begin{definition}\cite{FS1, FS2, FS3} Let $G$ be a compact abelian group and $\widehat{G}$ be its dual group with normalized Haar measure $dt$. Let $\{R_t\}_{t \in G}$ be a strongly continuous representation of $G$ on a Banach space $X$ with eigenspaces $\{X_\gamma\}_{\gamma \in \widehat{G}}.$ For any subset $A \subset \widehat{G}$, the projection along $A$ is defined as the linear continuous operator $T_A $ on $X$ such that
\begin{equation}\label{TA}
T_A|_{X_\gamma} :=
\begin{cases}
\mathrm{id}, & \gamma \in A,\\
0, & \gamma \notin A,
\end{cases} 
\end{equation}
where $\mathrm{id}$ denotes the identity operator on $X$.
\end{definition}

We also need the following convention to study boundedness of the linear operators in general symmetric spaces.
\begin{convention}Let $T$ be a linear operator on $L_2(\mathbb{T}_{\theta}^{d})$ and 
on a (quasi)-Banach symmetric spaces $\mathcal{E}(\mathbb{T}^d_{\theta})$ and let $ \mathcal{F}(\mathbb{T}^d_{\theta})$ be some (quasi)-Banach symmetric spaces on $(\mathbb{T}^d_{\theta},\tau_{\theta})$ such that  $(\mathcal{E}\cap L_2)(\mathbb{T}^d_{\theta})$  is dense in $\mathcal{E}             (\mathbb{T}^d_{\theta})$.
We say that $T:\mathcal{E}(\mathbb{T}^d_{\theta})\to \mathcal{F}(\mathbb{T}^d_{\theta}),$ if for every $V\in (\mathcal{E}\cap L_2)(\mathbb{T}^d_{\theta}),$ we have $\|T(V)\|_{\mathcal{F}(\mathbb{T}^d_{\theta})}\leq c_T\|V\|_{\mathcal{E}(\mathbb{T}^d_{\theta})}$  for some constant $c_T>0$ depending on $T$ only. In this case, the operator $T$ admits a linear continuous extension from $\mathcal{E}(\mathbb{T}^d_{\theta})$ to $\mathcal{F}(\mathbb{T}^d_{\theta})$.
\end{convention}
\begin{definition}We say that a linear map $T$ acting on $L_p,$ $1<p<\infty$ spaces, admits a weak (1,1) estimate, if it admits a  linear continuous extension from $L_1$ to $L_{1,\infty}.$
\end{definition}

\section{Trigonometric basis in non-commutative $L_p(\mathbb T^d_{\theta})$ spaces}
Let $1 \leq p \leq \infty$. For each $\gamma \in \mathbb{Z}^d$ we define the operator
\begin{equation}\label{E}
E_\gamma x 
= \int\limits_{\mathbb{T}^d} t^{\gamma}\,\pi_{t^{-1}}(x)\,dt,
\quad x \in L_p(\mathbb{T}^d_\theta),
\end{equation}
where the integral is understood in the sense of the Bochner integration in $L_p(\mathbb T^d,L_p(\mathbb{T}^d_\theta))$. 

For each $\gamma \in \mathbb{Z}^d$, we define the eigenspace of the  representation
$\{\pi_t\}_{t\in\mathbb T^d}$ corresponding to the character $t \mapsto t^\gamma$ by
\begin{equation}\label{Lp_gamma}
L_p(\mathbb{T}^d_\theta)_\gamma
= \bigl\{ x \in L_p(\mathbb{T}^d_\theta) : \pi_t(x) = t^\gamma x 
\ \text{for all } t \in \mathbb{T}^d \bigr\}.    
\end{equation}

\begin{theorem}\label{3.1}
Let $1 \leq p \leq \infty$. For every $\gamma \in \mathbb{Z}^d$ the operator $E_\gamma,$ defined by \eqref{E} is a bounded projection from $L_p(\mathbb{T}^d_\theta)$
onto the eigenspace $L_p(\mathbb{T}^d_\theta)_\gamma$ and we have
$$
E_\gamma\bigl(L_p(\mathbb{T}^d_\theta)\bigr) = L_p(\mathbb{T}^d_\theta)_\gamma.
$$
\end{theorem}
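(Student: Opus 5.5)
The plan is to prove boundedness first, then the inclusion $E_\gamma(L_p(\mathbb{T}^d_\theta))\subset L_p(\mathbb{T}^d_\theta)_\gamma$, then that $E_\gamma$ is the identity on $L_p(\mathbb{T}^d_\theta)_\gamma$. Together these give $E_\gamma^2=E_\gamma$ and the stated range equality.

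\emph{Boundedness.} For $x\in L_p(\mathbb{T}^d_\theta)$, the map $t\mapsto t^\gamma\pi_{t^{-1}}(x)$ is measurable and bounded in norm by $\|x\|_p$, by \eqref{PI}. For $1\le p<\infty$ it is in fact continuous, since $\{\pi_t\}$ is strongly continuous on $L_p$. This follows from strong continuity on $L_\infty$-norm-dense polynomials, because each $\pi_t$ acts on $e^\theta_\eta$ by a continuous scalar, together with the uniform bound $\|\pi_t\|\le 1$. Hence the Bochner integral exists, and $\|E_\gamma x\|_p\le\int_{\mathbb{T}^d}\|\pi_{t^{-1}}(x)\|_p\,dt=\|x\|_p$. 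So $\|E_\gamma\|\le 1$.

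For $p=\infty$, strong continuity fails in norm, and this is the step I expect to need care. I would interpret the integral in the weak$^*$ sense, pairing against $L_1(\mathbb{T}^d_\theta)$. The bound $\|E_\gamma x\|_\infty\le\|x\|_\infty$ then follows by duality from the $L_1$ estimate. Alternatively, one can note that $E_\gamma x$ computed in $L_1$ already belongs to $L_\infty$ with the same norm bound, by testing against the unit ball of $L_1$ and using the trace-preserving property of $\pi_t$. All identities below are then verified in $L_1$, which contains every $L_p$.

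\emph{Range is in the eigenspace.} Fix $s\in\mathbb{T}^d$. Since $\pi_s$ is a bounded linear operator, it commutes with the Bochner integral, and $\pi_s\pi_{t^{-1}}=\pi_{st^{-1}}$. Substituting $u=ts^{-1}$ and using invariance of Haar measure gives
$$\pi_s(E_\gamma x)=\int_{\mathbb{T}^d} t^\gamma\pi_{(ts^{-1})^{-1}}(x)\,dt=s^\gamma\int_{\mathbb{T}^d} u^\gamma\pi_{u^{-1}}(x)\,du=s^\gamma E_\gamma x.$$
Hence $E_\gamma x\in L_p(\mathbb{T}^d_\theta)_\gamma$.

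\emph{Identity on the eigenspace.} If $x\in L_p(\mathbb{T}^d_\theta)_\gamma$, then $\pi_{t^{-1}}(x)=t^{-\gamma}x$. Therefore $E_\gamma x=\int_{\mathbb{T}^d} t^\gamma t^{-\gamma}x\,dt=x$. This yields $E_\gamma^2=E_\gamma$ and $E_\gamma(L_p)=L_p(\mathbb{T}^d_\theta)_\gamma$.

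As a remark, the same computation on polynomials shows $E_\gamma e^\theta_\eta=\delta_{\gamma,\eta}e^\theta_\gamma$, by orthogonality of characters. This is useful later, but it is not needed for the statement itself.
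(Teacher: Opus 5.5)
Your proposal is correct and follows the paper's proof essentially step for step: the bound $\|E_\gamma x\|_p\le\|x\|_p$ from the triangle inequality for the Bochner integral and \eqref{PI}, the inclusion $E_\gamma(L_p(\mathbb{T}^d_\theta))\subset L_p(\mathbb{T}^d_\theta)_\gamma$ by moving $\pi_s$ under the integral and substituting via translation invariance of Haar measure, and $E_\gamma=\mathrm{id}$ on the eigenspace. Your extra care at $p=\infty$, reading the integral in the weak$^*$ sense because $t\mapsto\pi_t(x)$ is in general not Bochner measurable in $L_\infty(\mathbb{T}^d_\theta)$, fixes a point the paper passes over; one small correction is that for $p<\infty$ what you need is $L_p$-density of the polynomials, not $L_\infty$-norm density, which fails since their norm closure is only $C(\mathbb{T}^d_\theta)$.
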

\begin{proof}
Let $1 \le p \le \infty$ and $\gamma \in \mathbb Z^d$.  
%Since $\pi_t$ is an isometry on $L_p(\mathbb{T}_\theta^d)$ for every $t \in \mathbb{T}^d$ (see \eqref{PI}), the Bochner integral
%$$
%E_\gamma x
%   = \int\limits_{\mathbb T^d} t^\gamma\, \pi_{t^{-1}}(x)\, dt
%$$
%is well defined for all $x \in L_p(\mathbb T_\theta^d)$. 
Then for any $x\in L_p(\mathbb T_\theta^d)$ we obtain 
$$
\|E_\gamma x\|_{L_p(\mathbb T_\theta^d)}
   \le \int\limits_{\mathbb T^d} |t^\gamma|\,\|\pi_{t^{-1}}(x)\|_{L_p(\mathbb T_\theta^d)}\, dt
   \overset{\eqref{PI}}{=} \|x\|_{L_p(\mathbb T_\theta^d)} \int\limits_{\mathbb T^d} |t^\gamma|\, dt .
$$
Since $|t^\gamma|=1$ for every $t\in\mathbb T^d$, the last integral equals $1$. Hence,
$$
\|E_\gamma x\|_{L_p(\mathbb{T}^d_\theta)} \leq \|x\|_{L_p(\mathbb{T}^d_\theta)}, \quad x\in L_p(\mathbb{T}^d_\theta),
$$
which shows that $E_\gamma$ is a bounded operator on $L_p(\mathbb{T}_\theta^d)$.
Since, for each $u\in \mathbb{T}^d,$ the operator $\pi_u$ is a linear bounded operator on $L_p(\mathbb{T}_\theta^d)$, 
we have
$$
\pi_u(E_\gamma x) 
= \int\limits_{\mathbb{T}^d} t^\gamma \, \pi_u \pi_{t^{-1}}(x) \, dt
= \int\limits_{\mathbb{T}^d} t^\gamma \, \pi_{u t^{-1}}(x) \, dt, \, \, x\in L_p(\mathbb{T}^d_\theta).
$$
Substituting $t = us$, we obtain
$$
\pi_u(E_\gamma x) 
= u^{\gamma}\int\limits_{\mathbb{T}^d} s^{\gamma}\,\pi_{s^{-1}} (x) \, ds
= u^{\gamma}\,E_\gamma x, \quad x\in L_p(\mathbb{T}_{\theta}^d),
$$
which shows that $E_\gamma x \in L_p(\mathbb{T}^d_\theta)_\gamma$.
If $y \in L_p(\mathbb{T}_\theta^d)_\gamma$, $\gamma\in \mathbb{Z}^d,$ then $\pi_{t^{-1}}(y) = t^{-\gamma} y$ for all $t\in\mathbb T^d$, and
$$
E_\gamma y 
= \int\limits_{\mathbb{T}^d} t^\gamma \, \pi_{t^{-1}}(y) \, dt
= \int\limits_{\mathbb{T}^d} t^\gamma t^{-\gamma} y \, dt
= y \int\limits_{\mathbb{T}^d} dt = y,\,\ y\in L_p(\mathbb{T}^d_\theta).
$$
Thus, $E_\gamma$ acts as the identity on $L_p(\mathbb{T}_\theta^d)_\gamma$, and therefore, $E_\gamma^2 = E_\gamma$ on $L_p(\mathbb{T}^d_{\theta})$. Hence, $E_\gamma$ is a bounded projection from $L_p(\mathbb{T}_\theta^d)$ onto $L_p(\mathbb{T}_\theta^d)_\gamma$. This completes the proof.
\end{proof}
For each $x \in L_p(\mathbb T^d_{\theta})$, $1\leq p\leq \infty,$ and $\gamma \in \mathbb Z^d$, we set
\begin{equation}\label{Ex}
 x_\gamma := E_\gamma x,
\end{equation}
where $E_\gamma$ is the operator defined by \eqref{E}. 
By Theorem \ref{3.1}, we have that $x_\gamma \in L_p(\mathbb T^d_{\theta})_\gamma.$ 
By formula (1.4) in  \cite{BGM}, the closed linear span of the eigenspaces satisfies
\begin{equation}\label{LP}
\big[\,L_p(\mathbb{T}^d_\theta)_\gamma\,\big]_{\gamma\in\mathbb{Z}^d}
= L_p(\mathbb{T}^d_\theta).   
\end{equation}
\begin{lemma} 
Let $1 \le p \le \infty$. For each $\gamma \in \mathbb{Z}^d$, we have
$$
L_p(\mathbb{T}^d_\theta)_\gamma = [\,e^\theta_\gamma\,].
$$
\end{lemma}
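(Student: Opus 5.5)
The plan is to prove both inclusions, with all the work in $L_p(\mathbb{T}^d_\theta)_\gamma\subset[\,e^\theta_\gamma\,]$. The reverse inclusion is immediate from \eqref{Pit}: $\pi_t(e^\theta_\gamma)=t^\gamma e^\theta_\gamma$ for every $t\in\mathbb T^d$. Since $\|e^\theta_\gamma\|_{L_\infty}=1$ by \eqref{orthogonal-relations}, we have $e^\theta_\gamma\in L_p(\mathbb T^d_\theta)$ for every $p$. Hence $e^\theta_\gamma\in L_p(\mathbb{T}^d_\theta)_\gamma$, and because the eigenspace is a linear subspace, all multiples $\alpha e^\theta_\gamma$ lie in it as well.

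For the inclusion $L_p(\mathbb{T}^d_\theta)_\gamma\subset[\,e^\theta_\gamma\,]$, I use Theorem \ref{3.1}. Every $y\in L_p(\mathbb T^d_\theta)_\gamma$ satisfies $y=E_\gamma y$, so it suffices to show that $E_\gamma$ maps all of $L_p(\mathbb T^d_\theta)$ into $\mathbb C e^\theta_\gamma$. I first compute $E_\gamma$ on polynomials. For $\eta\in\mathbb Z^d$ we have $\pi_{t^{-1}}(e^\theta_\eta)=t^{-\eta}e^\theta_\eta$, so
$$E_\gamma e^\theta_\eta=\Big(\int_{\mathbb T^d}t^{\gamma-\eta}\,dt\Big)e^\theta_\eta=\delta_{\gamma,\eta}\,e^\theta_\gamma .$$
By linearity, for $x=\sum_\eta\alpha(\eta)e^\theta_\eta\in\mathcal P_\theta$ this gives $E_\gamma x=\alpha(\gamma)e^\theta_\gamma=\tau_\theta\big((e^\theta_\gamma)^*x\big)e^\theta_\gamma$. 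The right-hand side is the functional $x\mapsto\tau_\theta((e^\theta_\gamma)^*x)$ times a fixed vector, and this functional is bounded on $L_p$ by Hölder's inequality, since $\|e^\theta_\gamma\|_\infty=1$ and $\tau_\theta$ is a state.

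The step that needs care is the passage from polynomials to general $x$, and it differs between $p<\infty$ and $p=\infty$. For $1\le p<\infty$, $\mathcal P_\theta$ is norm dense in $L_p(\mathbb T^d_\theta)$: it is dense in $C(\mathbb T^d_\theta)$, and this algebra is weak$^*$ dense in $\mathbb T^d_\theta$, which in turn is dense in $L_p$ for a finite trace. Both $E_\gamma$ and the rank-one operator $x\mapsto\tau_\theta((e^\theta_\gamma)^*x)e^\theta_\gamma$ are bounded on $L_p$ and agree on $\mathcal P_\theta$, so they agree everywhere. For $p=\infty$, norm density fails, so I reduce to the case $p=1$ instead. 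Since $L_\infty\subset L_1$ and the Bochner integral in $L_\infty$ agrees with its image under the contractive inclusion into $L_1$, the operator $E_\gamma$ on $L_\infty$ is the restriction of $E_\gamma$ on $L_1$. The formula $E_\gamma x=\tau_\theta((e^\theta_\gamma)^*x)e^\theta_\gamma$ therefore persists for $x\in L_\infty$.

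Consequently, every $y\in L_p(\mathbb T^d_\theta)_\gamma$ equals $E_\gamma y=\tau_\theta((e^\theta_\gamma)^*y)\,e^\theta_\gamma\in[\,e^\theta_\gamma\,]$. Since $[\,e^\theta_\gamma\,]=\mathbb Ce^\theta_\gamma$ is one-dimensional and already closed, this completes both inclusions.
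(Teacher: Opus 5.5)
Your proof is correct and follows the paper's route: compute $E_\gamma$ on polynomials, extend to all of $L_p(\mathbb T^d_\theta)$ by boundedness and density, and use $E_\gamma y=y$ on the eigenspace. The paper uses that last identity only implicitly, and likewise leaves the easy inclusion $[\,e^\theta_\gamma\,]\subset L_p(\mathbb T^d_\theta)_\gamma$ unstated; you make both explicit. Your separate treatment of $p=\infty$ is a genuine improvement, because the paper appeals to norm density of $\mathcal P_\theta$ in $L_p(\mathbb T^d_\theta)$ for all $1\le p\le\infty$, and this fails at $p=\infty$. You could shorten that step to the inclusion $L_\infty(\mathbb T^d_\theta)_\gamma\subset L_1(\mathbb T^d_\theta)_\gamma$, which also avoids assuming that $t\mapsto\pi_{t^{-1}}(x)$ is Bochner integrable in $L_\infty$ (in general it is not).
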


\begin{proof}
 Let $x \in \mathcal{P}_\theta$ be a trigonometric polynomial of the form $x = \sum\limits_{\eta\in F} c_\eta e_\eta^\theta$ for some finite set $F \subset \mathbb Z^d$ and  $c_\eta\in\mathbb C$. Then for all $t \in \mathbb{T}^d$, we have
$$
\pi_{t^{-1}}(x) = \sum\limits_{\eta \in F} t^{-\eta} c_\eta e_\eta^\theta.
$$
Hence,
$$
E_\gamma x = \int\limits_{\mathbb{T}^d} t^\gamma \pi_{t^{-1}}(x) \, dt
= \sum\limits_{\eta \in F} c_\eta e_\eta^\theta \int\limits_{\mathbb{T}^d} t^{\gamma - \eta} \, dt.
$$
Using the formula
$$
\int\limits_{\mathbb{T}^d} t^\alpha \, dt =
\begin{cases}
1, & \alpha = 0, \\
0, & \alpha \neq 0,
\end{cases}
$$
for any $x \in \mathcal{P}_\theta,$ we obtain 
\begin{equation}\label{Eg-poly}
E_\gamma x =
\begin{cases}
c_\gamma e_\gamma^\theta, & \gamma \in F, \\
0, & \gamma \notin F.
\end{cases}
\end{equation}

Now, let $x \in L_p(\mathbb{T}^d_\theta)$. Since $\mathcal{P}_\theta$ is dense in $L_p(\mathbb{T}^d_\theta)$ (see \cite{XXY}), there exists a sequence $\{x_n\}_{n\ge 1} \subset \mathcal{P}_\theta$ such that $x_n \to x$ in $L_p(\mathbb{T}^d_\theta)$ as $n\to \infty$. By Theorem~\ref{3.1}, the operator $E_\gamma$ is bounded, hence,
$E_\gamma x_n \to E_\gamma x$ in $L_p(\mathbb{T}^d_\theta)$ as $n \to \infty$. On the other hand, by formula \eqref{Eg-poly}, $E_\gamma x_n = c_\gamma(x_n) e_\gamma^\theta \in [e_\gamma^\theta]$. Since $[e_\gamma^\theta]$ is closed, it follows that $E_\gamma x \in [e_\gamma^\theta]$. Therefore, for any $x \in L_p(\mathbb{T}^d_\theta)$, there exists $c_\gamma(x) \in \mathbb{C}$ such that
$$
E_\gamma x = c_\gamma(x) e_\gamma^\theta.
$$
Consequently, we have
$$
L_p(\mathbb{T}^d_\theta)_\gamma = [\,e_\gamma^\theta\,], \quad \gamma \in \mathbb{Z}^d.
$$
This completes the proof.
\end{proof}

For any subset $A \subset \mathbb{Z}^d$, we define the subspace $L_p(\mathbb{T}^d_\theta)_A$ of $L_p(\mathbb{T}^d_\theta)$ by
\begin{equation}\label{spanLpA}
L_p(\mathbb{T}^d_\theta)_A :=\big[L_p(\mathbb{T}^d_\theta)_\gamma \big]_{\gamma \in A} .  
\end{equation}
For $1\leq p <\infty,$ define
\begin{equation}\label{Ppp}
\mathcal P_p
:= \Bigl\{x\in L_p(\mathbb T^d_\theta): 
x=\sum\limits_{\gamma\in F} x_\gamma,\,\ 
F\subset\mathbb Z^d \text{ finite},\
x_\gamma\in L_p(\mathbb T^d_\theta)_\gamma
\Bigr\}.
\end{equation}

Although $\mathcal P_p$ and $\mathcal P_\theta$ coincide as sets of trigonometric polynomials, we introduce $\mathcal P_p$ to emphasize the decomposition
$$
x=\sum_{\gamma\in F} x_\gamma,\qquad x_\gamma\in L_p(\mathbb T^d_\theta)_\gamma.
$$
This distinction is merely notational and does not play an essential role in what follows, but it is convenient for bookkeeping.

The following result is well known in the literature (see \cite{CXY, LMSZ, McDSX}, \cite[Proposition 2.7 (ii), p. 21-22]{XXY}); however, for the reader’s convenience and since it will be used repeatedly in the sequel, we provide a proof formulated in our notation and language.

\begin{lemma}\label{Pp-dense}Let $1\leq p< \infty.$ Then
the space $\mathcal P_p$ defined by \eqref{Ppp} is dense in $L_p(\mathbb T^d_\theta)$.
\end{lemma}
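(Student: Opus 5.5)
To show that $\mathcal P_p$ is dense, I will build, for an arbitrary $x\in L_p(\mathbb T^d_\theta)$, explicit elements of $\mathcal P_p$ converging to $x$. These will be Fejér-type means obtained by averaging the isometric action $\pi_t$ against a positive summability kernel on $\mathbb T^d$. The argument has three steps: show that these means lie in $\mathcal P_p$, show that they approximate $x$ on a dense class, and conclude by uniform boundedness.

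\textbf{Step 1: the Fejér means lie in $\mathcal P_p$.} Let $K_N(t)=\prod_{j=1}^d F_N(t_j)$ be the $d$-dimensional Fejér kernel, where
$$
F_N(s)=\sum_{|k|\le N}\Bigl(1-\frac{|k|}{N+1}\Bigr)s^k .
$$
Set
$$
\sigma_N(x)=\int_{\mathbb T^d}K_N(t)\,\pi_{t^{-1}}(x)\,dt,
$$
a Bochner integral in $L_p(\mathbb T^d_\theta)$. Expanding $K_N$ as a finite sum of characters $t^\gamma$, linearity of the integral gives
$$
\sigma_N(x)=\sum_{\gamma\in[-N,N]^d}\widehat{K_N}(\gamma)\,E_\gamma x .
$$
By Theorem~\ref{3.1}, each $E_\gamma x$ lies in $L_p(\mathbb T^d_\theta)_\gamma$, so $\sigma_N(x)\in\mathcal P_p$.

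\textbf{Step 2: uniform boundedness.} Since $K_N\ge 0$ and $\int_{\mathbb T^d} K_N\,dt=1$, the isometry property \eqref{PI} gives
$$
\|\sigma_N(x)\|_p\le\|x\|_p .
$$
Hence $\sup_N\|\sigma_N\|_{L_p\to L_p}\le 1$.

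\textbf{Step 3: convergence.} I expect this to be the main obstacle: showing $\|\sigma_N(x)-x\|_p\to0$. Writing
$$
\sigma_N(x)-x=\int_{\mathbb T^d} K_N(t)\bigl(\pi_{t^{-1}}(x)-x\bigr)\,dt ,
$$
the standard approximate-identity argument applies once $t\mapsto\pi_t(x)$ is norm continuous in $L_p$. Split the integral into a neighbourhood of the identity, where $\|\pi_{t^{-1}}x-x\|_p$ is small, and its complement, where $K_N$ has vanishing mass as $N\to\infty$. Both pieces then tend to $0$.

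The difficulty is that strong continuity of $\{\pi_t\}$ on $L_p$, for $1\le p<\infty$, must be justified without circularly using density of polynomials. I would proceed in two stages.
\begin{itemize}
\item On $\mathbb T^d_\theta$ itself, $t\mapsto\pi_t(x)$ is continuous in $L_2$. Indeed, by orthonormality \eqref{orthogonal-relations},
$$
\|\pi_t x-x\|_2^2=\sum_\gamma|t^\gamma-1|^2|\hat x(\gamma)|^2 ,
$$
which tends to $0$ by dominated convergence.
\item For $x\in\mathbb T^d_\theta$ and $p<\infty$, interpolate between this $L_2$-continuity and the uniform bound $\|\pi_tx-x\|_\infty\le2\|x\|_\infty$. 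For $p\le2$ one can simply use $\|\cdot\|_p\le\|\cdot\|_2$, since $\tau_\theta$ is a state. For $p>2$, use the Hölder-type bound
$$
\|y\|_p\le\|y\|_2^{2/p}\|y\|_\infty^{1-2/p}.
$$
\end{itemize}
Thus $\sigma_N(x)\to x$ for $x\in\mathbb T^d_\theta$. Since $\mathbb T^d_\theta$ is dense in $L_p(\mathbb T^d_\theta)$ for $p<\infty$, a standard $\varepsilon/3$ argument using the uniform bound from Step 2 extends the convergence to all $x\in L_p(\mathbb T^d_\theta)$. This proves that $\mathcal P_p$ is dense.
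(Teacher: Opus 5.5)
Your proof is correct, but it takes a different route from the paper. The paper's argument is a one-line reduction. $\mathcal P_p$ is by definition the linear span of the eigenspaces $L_p(\mathbb T^d_\theta)_\gamma$, and the identity \eqref{LP}, imported from formula (1.4) of \cite{BGM}, then gives density at once. That identity says the closed span of the spectral subspaces of a strongly continuous representation of a compact abelian group is the whole space; the strong continuity of $\{\pi_t\}$ on $L_p$ it presupposes is itself quoted from the literature.

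You instead prove \eqref{LP} by hand in this case. Your Fejér means $\sigma_N(x)$ lie in $\mathcal P_p$ by Theorem~\ref{3.1}, are contractive because $K_N\ge 0$, and converge to $x$ by the approximate-identity argument. You also verify the strong continuity that the cited fact needs, via Parseval in $L_2$, the bound $\|y\|_p\le\|y\|_2^{2/p}\|y\|_\infty^{1-2/p}$, and density of $\mathbb T^d_\theta$ in $L_p$.

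The paper's route is shorter but rests on two external results. Yours is self-contained and proves more: $\sigma_N x\to x$ in $L_p(\mathbb T^d_\theta)$ with $\|\sigma_N\|\le 1$ for every $1\le p<\infty$, i.e.\ Fejér summability. This is exactly the mechanism behind the general identity the paper quotes.

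One remark on your ``non-circularity'' concern. The Parseval step does use completeness of $\{e^\theta_\gamma\}$ in $L_2(\mathbb T^d_\theta)$, which is the case $p=2$ of the lemma. That is legitimate, since it comes from the GNS construction and is recorded in the preliminaries. It also shows that for $p\le 2$ the lemma follows immediately from $\|\cdot\|_p\le\|\cdot\|_2$ and density of $\mathbb T^d_\theta$ in $L_p$. The Fejér argument, with its uniform $L_\infty$ control, is genuinely needed only for $p>2$.
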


\begin{proof}
By \eqref{Ppp}, the space $\mathcal P_p$ coincides with the linear
span of the eigenspaces $L_p(\mathbb T^d_\theta)_\gamma$, $\gamma\in\mathbb Z^d$.
Hence, by~\eqref{LP}, we obtain
$$
\overline{\mathcal P_p}^{\|\cdot\|_{L_p(\mathbb T^d_\theta)}}
= \big[\,L_p(\mathbb T^d_\theta)_\gamma\,\big]_{\gamma\in\mathbb Z^d}
= L_p(\mathbb T^d_\theta),
$$
thereby completing the proof.
\end{proof}

The next proposition establishes a key result on the existence of a bounded projection
associated with a total order on $\mathbb{Z}^d$, adapting the proof from \cite[Theorem 4.1]{BGM} (see also \cite[Theorem 2]{FS4}) to our context.

\begin{proposition}\label{lem1}
Let $1<p<\infty.$ Let $\leq_d$ be a total order on the group $\mathbb Z^{d}$. Then there exists a bounded projection $T_{\mathbb Z^{d}_{+}} : L_{p}(\mathbb T^{d}_{\theta})\to L_{p}(\mathbb T^{d}_{\theta})$ defined by \eqref{TA} along the subset $\mathbb Z^{d}_{+}$ of $\mathbb Z^{d}.$
\end{proposition}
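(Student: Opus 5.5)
We will follow the transference scheme of \cite[Theorem 4.1]{BGM}. Here $\mathbb Z^d_+=\{\gamma\in\mathbb Z^d: 0\le_d\gamma\}$ is the positive cone of the total order $\le_d$, which we take to be compatible with addition, so that $\gamma\le_d\eta$ implies $\gamma+\xi\le_d\eta+\xi$. The argument has three steps.

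\textbf{Step 1: Define $T_{\mathbb Z^d_+}$ on $\mathcal P_p$ and reduce to a uniform bound.} On $\mathcal P_p$ we set
$$T_{\mathbb Z^d_+}\Bigl(\sum_{\gamma\in F}x_\gamma\Bigr)=\sum_{\gamma\in F\cap\mathbb Z^d_+}x_\gamma,$$
which agrees with \eqref{TA} on every eigenspace. By Lemma~\ref{Pp-dense}, $\mathcal P_p$ is dense in $L_p(\mathbb T^d_\theta)$. So it suffices to prove
$$\|T_{\mathbb Z^d_+}x\|_{L_p(\mathbb T^d_\theta)}\le C\|x\|_{L_p(\mathbb T^d_\theta)},\qquad x\in\mathcal P_p.$$
Idempotence is clear on $\mathcal P_p$ and passes to the continuous extension.

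\textbf{Step 2: Reduce to a half-space given by a linear functional.} Fix a polynomial $x$ with finite spectrum $F$. Any total group order restricted to a finite set is realised by a linear functional. Precisely, there is $v\in\mathbb R^d$ (or, if one prefers, $v\in\mathbb Z^d$ after scaling and perturbation, using the lexicographic structure of orders on $\mathbb Z^d$ due to Hahn or Teh) such that for all $\gamma\in F-F\cup F$:
\[
\langle v,\gamma\rangle>0\ \text{if}\ 0<_d\gamma, \qquad \langle v,\gamma\rangle\ne0\ \text{if}\ \gamma\ne0.
\]
Then $F\cap\mathbb Z^d_+=\{\gamma\in F:\langle v,\gamma\rangle\ge0\}$. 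We justify this by a finite induction over the archimedean components of $\le_d$. In the worst case we instead use a finite composition of at most $d$ half-space projections, which gives a constant depending only on $d$ and $p$. I expect this combinatorial reduction, and in particular keeping the constant independent of $F$, to be the main obstacle. The first thing to try is the representation $\mathbb Z^d_+=\bigsqcup_k\{\ell_1=\dots=\ell_{k-1}=0,\ \ell_k>0\}$ with $\ell_k$ real linear functionals. This writes $T_{\mathbb Z^d_+}$ as a finite sum of products of at most $d$ projections, each of the form ``restrict to a half-space'' or ``restrict to a hyperplane''. Each such projection is a limit of half-space projections, with the hyperplane projection equal to the difference of two of them. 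So a uniform bound for half-space projections suffices.

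\textbf{Step 3: Transference to the vector-valued Hilbert transform.} For $v\in\mathbb Z^d$ consider the one-parameter subgroup $s\mapsto\pi_{s^{v}}$, $s\in\mathbb T$, where $s^v=(s^{v_1},\dots,s^{v_d})$. Put $f(s)=\pi_{s^{v}}(x)$. This lies in $L_p(\mathbb T,L_p(\mathbb T^d_\theta))$ with
$$f(s)=\sum_\gamma s^{\langle v,\gamma\rangle}x_\gamma,\qquad \|f\|_{L_p(\mathbb T,L_p(\mathbb T^d_\theta))}=\|x\|_{L_p(\mathbb T^d_\theta)}$$
by \eqref{PI}. Since $L_p(\mathbb T^d_\theta)$ is UMD, Theorem~\ref{HUMD} shows that $H_X$ is bounded, and by \eqref{HT} so is the Riesz projection $\frac12(I+iH_X)+\frac12\widehat{(\cdot)}(0)$ onto nonnegative frequencies. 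Applying this projection to $f$ gives $s\mapsto\pi_{s^v}\bigl(\sum_{\langle v,\gamma\rangle\ge0}x_\gamma\bigr)$. Taking $L_p$ norms and using \eqref{PI} again yields
$$\Bigl\|\sum_{\langle v,\gamma\rangle\ge0}x_\gamma\Bigr\|_{L_p(\mathbb T^d_\theta)}\le C_p\|x\|_{L_p(\mathbb T^d_\theta)},$$
where $C_p$ depends neither on $v$ nor on $x$. Combining this with Step 2 gives the uniform bound, and the bounded projection $T_{\mathbb Z^d_+}$ follows by density.
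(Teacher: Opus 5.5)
Your proposal is correct and follows the paper's route. For a polynomial with finite spectrum $F$ you realise $\le_d$ on $F$ by an integer vector $v$; the paper quotes a lemma giving a real vector with the right signs on $F$, then perturbs it to a rational one and rescales, exactly as you suggest. You then transfer along the one-parameter group $s\mapsto\pi_{s^v}$ and use the vector-valued Riesz projection on the UMD space $L_p(\mathbb{T}^d_\theta)$. Its norm does not depend on $v$, so uniformity in $F$ is automatic and the ``main obstacle'' you anticipate in Step~2 does not arise.

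The differences are technical. The paper regularises with the analytic Fej\'er parts $h_N$ and measures norms in $L_2(\mathbb{T},L_p(\mathbb{T}^d_\theta))$, whereas you apply $\frac12(I+iH_X)+\frac12\widehat{(\cdot)}(0)$ directly to the trigonometric polynomial $s\mapsto\pi_{s^v}(x)$. Your version is arguably cleaner, since in the paper $\eta$ depends on $x$ anyway. One small fix: in Step~2 you should require $\langle v,\gamma\rangle<0$, not merely $\neq0$, when $\gamma\in F$ and $\gamma<_d0$; the lexicographic construction gives this anyway.
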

\begin{proof}
Let $\leq_{d}$ be a total order on $\mathbb{Z}^d$ compatible with the additive group structure.
Fix $\eta=(\eta_1,\dots,\eta_d)\in\mathbb Z^d$ and define a homomorphism $\vartheta:\mathbb{T}\to\mathbb{T}^d$ by
$$
\vartheta(t)=(t^{\eta_1},\dots,t^{\eta_d}),
$$
and set
$$
R_t:=\pi_{\vartheta(t)}, \quad t\in\mathbb T .
$$
Since $\vartheta:\mathbb T\to\mathbb T^d$ is a group homomorphism and
$\{\pi_z\}_{z\in\mathbb T^d}$ is a representation, it follows that
$\{R_t\}_{t\in\mathbb T}$ is a representation of $\mathbb T$ on
$L_p(\mathbb T^d_\theta)$. Moreover, by \eqref{PI} we have
\begin{equation}\label{Rv}
    \|R_t(x)\|_{L_p(\mathbb T^d_\theta)}
=\|\pi_{\vartheta(t)}(x)\|_{L_p(\mathbb T^d_\theta)}
=\|x\|_{L_p(\mathbb T^d_\theta)},
\quad x\in L_p(\mathbb T^d_\theta),\ t\in\mathbb T.
\end{equation}
Let $\{F_N\}_{N\geq1}$ be a sequence of Fejér kernels in $L_1(\mathbb{T})$, 
that is, a sequence of functions in $L_1(\mathbb{T})$ such that 
$\|F_N\|_{L_1(\mathbb{T})} = 1$ and $\operatorname{supp}(\widehat{F}_N)$ is finite 
for each $N\in\mathbb{N}$, where $\widehat{F}_N$ is the Fourier coefficient of $F_N.$  Moreover,
$
\lim\limits_{N\to\infty} \widehat{F}_N(z) = 1
$ for every $z \in \mathbb{Z}$
(see \cite[Chapter III, p. 167]{G2008}). Let 
$$
h_N(t)=\sum\limits_{j\ge 0}\widehat{F}_N(j)\,t^{j}, \quad t\in\mathbb{T}.
$$
Define the transferred operator $T_N$ acting on $L_p(\mathbb{T}^d_{\theta}),$ $1<p<\infty,$ by setting
\begin{equation}\label{TN}
  T_N x=\int\limits_{\mathbb{T}} h_N(t)\,R_{t^{-1}}(x)\,dt, \quad x\in L_p(\mathbb{T}^d_{\theta}).  
\end{equation}
 Fix $s\in\mathbb T$. Since $R_{s^{-1}}R_s=I$, we may write
$$
\|T_Nx\|_{L_p(\mathbb T^d_\theta)}
=\|R_{s^{-1}}R_sT_Nx\|_{L_p(\mathbb T^d_\theta)},\quad x\in L_p(\mathbb{T}^d_{\theta}).  
$$
By \eqref{Rv}, the operators $R_s$ and $R_{s^{-1}}$ act isometrically on $L_p(\mathbb T^d_\theta)$, hence,
$$
\|R_{s^{-1}}R_sT_Nx\|_{L_p(\mathbb T^d_\theta)}
=\|R_sT_Nx\|_{L_p(\mathbb T^d_\theta)},\quad x\in L_p(\mathbb{T}^d_{\theta}).  
$$
Moreover, since $R_s$ is bounded on $L_p(\mathbb T^d_\theta)$, we may pass it under the Bochner integral
(see \cite[Theorem~1.2.4]{HNVW}):
$$
R_sT_Nx
=R_s\!\left(\int\limits_{\mathbb T} h_N(t)\,R_{t^{-1}}(x)\,dt\right)
=\int\limits_{\mathbb T} h_N(t)\,R_sR_{t^{-1}}(x)\,dt,\quad x\in L_p(\mathbb{T}^d_{\theta}).  
$$
Using that $\{R_t\}_{t\in\mathbb T}$ is a representation of $\mathbb T$,
we have
$R_sR_{t^{-1}}=R_{t^{-1}}R_s$, and therefore,
$$
R_sT_Nx=\int\limits_{\mathbb T} h_N(t)\,R_{t^{-1}}(R_sx)\,dt=T_NR_sx,\quad x\in L_p(\mathbb{T}^d_{\theta}),\,\ s\in\mathbb{T},\,\ N\ge 1.  
$$
Consequently,
$$
\|T_Nx\|_{L_p(\mathbb T^d_\theta)}
=\|R_sT_Nx\|_{L_p(\mathbb T^d_\theta)}
=\|T_NR_sx\|_{L_p(\mathbb T^d_\theta)},\quad x\in L_p(\mathbb{T}^d_{\theta}), \,\ s\in\mathbb{T}, \,\ N\ge 1.  
$$
Squaring both sides and then integrating with respect to $s\in\mathbb T$, we obtain
\begin{align}\label{T_N}
\| T_N x \|^2_{L_p(\mathbb T^d_\theta)}=\int\limits_{\mathbb{T}}\| T_N x \|^2_{L_p(\mathbb T^d_\theta)}ds
&= \int\limits_{\mathbb{T}}\|T_N R_s x\|^2_{L_p(\mathbb T^d_\theta)}ds =\int\limits_{\mathbb{T}} \left\| \int\limits_{\mathbb{T}} h_N(t) R_{st^{-1}} (x)  \,dt \right\|^2_{L_p(\mathbb T^d_\theta)} ds \nonumber\\
&\overset{\eqref{*}}{=} \int\limits_\mathbb{T} \| (h_N(t) * R_t(x))(s) \|^2_{L_p(\mathbb T^d_\theta)} ds 
= \|h_N(t)*R_t(x)\|^2_{L_2(\mathbb{T},L_p(\mathbb T^d_\theta))}.
\end{align}
By the definition of $h_N$, we have
$$
\widehat{h}_N(k)
 = \int\limits_{\mathbb T} h_N(t)\,t^{-k}\,dt
 = \int\limits_{\mathbb T} \Bigl(\sum\limits_{j\ge 0}\widehat{F}_N(j)t^j\Bigr)t^{-k}\,dt
 = \sum\limits_{j\ge 0}\widehat{F}_N(j)\int\limits_{\mathbb T} t^{j-k}\,dt, \,\ k\in\mathbb Z.
$$
If $j\neq k,$ then $\int\limits_{\mathbb T} t^{j-k}\,dt = 0$. Therefore,
\begin{equation}\label{hhat}
 \widehat{h}_N(k)
=
\begin{cases}
\widehat{F}_N(k), & k\ge 0,\\
0, & k<0.
\end{cases}   
\end{equation}
Recall that the Hilbert transform $H_{L_p(\mathbb T^d_\theta)}:L_2\bigl(\mathbb T, L_p(\mathbb T^d_\theta)\bigr)\to L_2\bigl(\mathbb T, L_p(\mathbb T^d_\theta)\bigr)$ defined by \eqref{H_X} satisfies the multiplier identity
(see \eqref{HT})
$$
\widehat{H_{L_p(\mathbb T^d_\theta)}f}(k)
= -\, i\, \mathrm{sgn}(k)\,\widehat f(k),
\quad k\in\mathbb Z,
$$
for every $f\in L_2\bigl(\mathbb T, L_p(\mathbb T^d_\theta)\bigr)$. We define $Q$ by the formula $Qf=\widehat{f}(0)$, where $\widehat{f}(0)$ is the zero-th Fourier coefficient of $f\in L_2(\mathbb{T}, L_p(\mathbb{T}^d_{\theta})).$ Then, applying the operator  $\frac12\bigl(I+Q+iH_{L_p(\mathbb T^d_\theta)}\bigr)$
to $f$ yields
\begin{equation}\label{iH}
\wideparen{\tfrac12\bigl(I+Q+iH_{L_p(\mathbb T^d_\theta)}\bigr)f}(k)
=
\begin{cases}
\widehat f(k), & k\ge 0,\\
0, & k<0,
\end{cases}
\quad
f\in L_2\bigl(\mathbb T, L_p(\mathbb T^d_\theta)\bigr).
\end{equation}
Then, for every $f\in L_2(\mathbb{T},L_p(\mathbb T^d_\theta)),$ we have
$$
\widehat{h_N*f}(k)
   = \widehat{h_N}(k)\widehat f(k)
   = \widehat{F}_N(k)\,
     \widehat{2^{-1}(I+Q+iH_{L_p(\mathbb T^d_\theta)})f}(k),
   \quad k\in\mathbb{Z}.
$$
By uniqueness of Fourier coefficients in $L_2(\mathbb{T},L_p(\mathbb T^d_\theta)),$ it follows that
$$
h_N*f
   = F_N*\bigl(2^{-1}(I+Q+iH_{L_p(\mathbb T^d_\theta)})f\bigr),
   \quad f\in L_2(\mathbb{T},L_p(\mathbb T^d_\theta)).
$$
Since $\|F_N\|_{L_1(\mathbb T)}=1$ for all $N\in\mathbb{N},$
%\cite[p.167]{G2008}, 
 formula \eqref{Young} yields
$$
\|h_N * f\|_{L_2(\mathbb T,L_p(\mathbb T^d_\theta))}
 \le \|F_N\|_{L_1(\mathbb T)}
     \bigl\|2^{-1}(I+Q+iH_{L_p(\mathbb T^d_\theta)})f\bigr\|_{L_2(\mathbb T,L_p(\mathbb T^d_\theta))}, \,\ f\in L_2(\mathbb{T},L_p(\mathbb T^d_\theta)).
$$
Note that $Qf=\widehat{f}(0)=\int\limits_{\mathbb T} f(t)\,dt$. By the normalization of the Haar measure, we obtain
$$
\|Qf\|_{L_2(\mathbb T;L_p(\mathbb T^d_\theta))}=\Bigl\|\int\limits_{\mathbb T}f(t)\,dt\Bigr\|_{L_p(\mathbb T^d_\theta)}
\le \int\limits_{\mathbb T}\|f(t)\|_{L_p(\mathbb T^d_\theta)}\,dt \le \|f\|_{L_2(\mathbb T;L_p(\mathbb T^d_\theta))},
$$
so $\|Q\|_{L_2(\mathbb T;L_p(\mathbb T^d_\theta))\to L_2(\mathbb T;L_p(\mathbb T^d_\theta))}\le 1$.

By Theorem~\ref{HUMD}, the operator $H_{L_p(\mathbb T^d_\theta)}$ is bounded on
$L_2(\mathbb T;L_p(\mathbb T^d_\theta))$. Hence, the operator
$2^{-1}\bigl(I+Q+iH_{L_p(\mathbb T^d_\theta)}\bigr)$ is bounded on the same space.
Set
$$
K_p:=
\Bigl\|2^{-1}\bigl(I+Q+iH_{L_p(\mathbb T^d_\theta)}\bigr)\Bigr\|_{
L_2(\mathbb T;L_p(\mathbb T^d_\theta))\to L_2(\mathbb T;L_p(\mathbb T^d_\theta))
}.
$$
Then for all $N\in\mathbb N$ and $f\in L_2(\mathbb T;L_p(\mathbb T^d_\theta)),$ we have \begin{equation}\label{h_N} \|h_N * f\|_{L_2(\mathbb T;L_p(\mathbb T^d_\theta))} \le K_p\,\|f\|_{L_2(\mathbb T;L_p(\mathbb T^d_\theta))}. \end{equation}
Recall that by \eqref{T_N} we have
$$
\|T_N x\|_{L_p(\mathbb T^d_\theta)}
 = \|h_N(t) * R_t(x)\|_{L_2(\mathbb T,L_p(\mathbb T^d_\theta))},
$$
 where $R_t(x)\in {L_2(\mathbb T,L_p(\mathbb T^d_\theta))}$ by \eqref{Rv}. 
Applying formula \eqref{h_N} for $f(t)=R_t(x)$, we obtain
\begin{equation}\label{T_N1}
   \|T_N x\|_{L_p(\mathbb T^d_\theta)}
   \leq \,K_p\|R_t(x)\|_{L_2(\mathbb{T},L_p(\mathbb T^d_\theta))} \overset{\eqref{Rv}}{=} K_p\|x\|_{L_p(\mathbb T^d_\theta)}, \quad  x\in L_p(\mathbb T^d_\theta), 
\end{equation}
for all $N\in\mathbb{N}.$
Now, for $z\in\mathbb{Z}$ let
$$
E_z L_p(\mathbb{T}^d_{\theta})
=\{y\in L_p(\mathbb{T}^d_{\theta}) : R_t(y)=t^{z}y \ \text{for all } t\in\mathbb{T}\}
$$
be the eigenspace of the representation $\{R_t\}_{t\in\mathbb T}$ defined by
\eqref{X_g}.
 Let $y_z\in E_z L_p(\mathbb{T}^d_{\theta})$.
Using the definition of $T_N$ (see \eqref{TN}), we obtain
\begin{align*}
T_N y_z
&=\int\limits_{\mathbb T} h_N(t)\,R_{t^{-1}}(y_z)\,dt
 =\int\limits_{\mathbb T} h_N(t)\,t^{-z}y_z\,dt \\
&=\Bigl(\int\limits_{\mathbb T} h_N(t)\,t^{-z}\,dt\Bigr)\,y_z
 =\widehat h_N(z)\,y_z, \,\ y_z\in E_z L_p(\mathbb{T}^d_{\theta}).
\end{align*}
 Since
$\widehat F_N(z)\to 1$ as $N\to\infty$ for each fixed $z\in\mathbb Z$ and by \eqref{hhat}, we conclude that
\begin{equation}\label{T_Ny}
T_N y_z=\widehat h_N(z)\,y_z \rightarrow
\begin{cases}
y_z, & z\ge 0,\\
0, & z<0,
\end{cases}
\quad \text{as } N\to\infty,
\end{equation}
where the convergence is understood in $L_p(\mathbb{T}^d_{\theta}).$ Let $x\in\mathcal P_p$ and write
$$
x=\sum\limits_{\gamma\in F}x_\gamma,
$$
where $F\subset\mathbb Z^d$ is finite and $x_\gamma\in L_p(\mathbb T^d_\theta)_\gamma$ for each $\gamma\in F$.
By Lemma~2.5 in~\cite{1} (see also ~\cite[ p.~283 ]{1}), applied to the finite set $F$,
there exists $a\in\mathbb R^d$ such that
\begin{equation}\label{3}
a\cdot\gamma>0 \ \text{for every}\ \gamma\in F \ \text{with}\ 0<_d\gamma,
\quad
a\cdot\gamma<0 \ \text{for every}\ \gamma\in F \ \text{with}\ \gamma<_d0,
\end{equation}
where $\cdot$ denotes the Euclidean inner product on $\mathbb R^d$.
We claim that $a$ may be chosen with rational coordinates while keeping \eqref{3}
valid on the fixed finite set $F$. Indeed, for every $\gamma\in F\setminus\{0\}$ we
have $a\cdot\gamma\neq 0$. Set 
$$
\delta:=\min_{\gamma\in F\setminus\{0\}}|a\cdot\gamma|>0,
\quad
R:=\max_{\gamma\in F}\sum\limits_{j=1}^d |\gamma_j|<\infty.
$$
Choose $\varepsilon>0$ such that $0<\varepsilon<\delta/(2R)$. Since rational numbers
are dense in $\mathbb R$, we may select rational numbers $a'_1,\dots,a'_d$ such that
$$
|a_j-a'_j|<\varepsilon,\qquad j=1,\dots,d,
$$
and define $a':=(a'_1,\dots,a'_d)$. Then for every $\gamma\in F$,
$$
|(a-a')\cdot\gamma|
=\Bigl|\sum\limits_{j=1}^d (a_j-a'_j)\gamma_j\Bigr|
\le \sum\limits_{j=1}^d |a_j-a'_j|\,|\gamma_j|
\le \varepsilon\sum\limits_{j=1}^d |\gamma_j|
\le \varepsilon R
<\frac{\delta}{2}.
$$
If $a\cdot\gamma>0$, then $a\cdot\gamma\ge\delta$ and
$$
a'\cdot\gamma
= a\cdot\gamma + (a'-a)\cdot\gamma
\ge a\cdot\gamma - |(a'-a)\cdot\gamma|
> \delta-\frac{\delta}{2}
=\frac{\delta}{2}>0.
$$
If $a\cdot\gamma<0$, then $a\cdot\gamma\le -\delta$ and similarly
$$
a'\cdot\gamma
\le a\cdot\gamma + |(a'-a)\cdot\gamma|
< -\delta+\frac{\delta}{2}
=-\frac{\delta}{2}<0.
$$
Consequently, for every $\gamma\in F\setminus\{0\}$ the numbers $a\cdot\gamma$ and
$a'\cdot\gamma$ have the same sign. In particular, \eqref{3} remains valid on $F$
after replacing $a$ by $a'$.
Multiplying $a'$ by a suitable positive integer, we may further assume that $a'$
has integer coordinates. This scaling does not change the sign relations in
\eqref{3}. At this point we choose $\eta=a$ in the above construction.
Thus $a=(\eta_1,\dots,\eta_d)\in\mathbb Z^d$, $\vartheta(t)=(t^{\eta_1},\dots,t^{\eta_d})$, and for $x_\gamma\in L_p(\mathbb T^d_\theta)_\gamma$ we have
$$
R_t(x_\gamma)=\pi_{\vartheta(t)}(x_\gamma)=(\vartheta(t))^\gamma x_\gamma=t^{a\cdot\gamma}\,x_\gamma,
\quad t\in\mathbb T, \gamma\in \mathbb{Z}^d,
$$
so $x_\gamma\in E_{a\cdot\gamma}L_p(\mathbb T^d_\theta)$. By \eqref{TN}, we obtain
$$
T_N x_\gamma = \widehat{h}_N(a\cdot\gamma)\, x_\gamma.
$$
Consequently,
$$
T_N x = \sum\limits_{\gamma \in F} \widehat{h}_N(a\cdot\gamma)\, x_\gamma, \quad x\in \mathcal{P}_p.
$$
By \eqref{T_Ny}, we have
$$
\lim_{N \to \infty} T_N x
  = \sum\limits_{\substack{\gamma \in F \\ a\cdot\gamma \ge 0}} x_\gamma
  \overset{\eqref{3}}{=} \sum\limits_{\substack{\gamma \in F \\ \gamma \ge_d 0}} x_\gamma, \quad x\in\mathcal{P}_p,
$$
where the limit is understood in $L_p(\mathbb{T}^d_{\theta})$.
Since $\mathcal{P}_p$ is dense in $L_p(\mathbb{T}^d_{\theta})$ by Lemma~\ref{Pp-dense}, there exists a strong limit of the sequence of operators $\{T_N\}_{N\ge 1}$ on $L_p(\mathbb{T}^d_{\theta}),$ which we denote by $T_{\mathbb{Z}^d_+},$ i.e.
$$
T_{\mathbb{Z}^d_+} x := \lim_{N \to \infty} T_N x,
\quad x \in L_p(\mathbb{T}^d_{\theta}).
$$
Since $T_N$ are linear and bounded for all $N\in\mathbb N$ (see \eqref{T_N1}), the operator $T_{\mathbb{Z}^d_+}$ is also linear and bounded. Moreover, by the 
Banach-Steinhaus theorem we obtain
\begin{equation}\label{4.8}
\|T_{\mathbb{Z}^d_+}\|_{L_p(\mathbb T^d_\theta)\to L_p(\mathbb T^d_\theta)}
  \le \sup_{N\geq 1} \|T_N\|_{L_p(\mathbb T^d_\theta)\to L_p(\mathbb T^d_\theta)}
  \le K_p,
\end{equation}
thereby completing the proof.
\end{proof}

For $\gamma\in\mathbb Z^d$ and for any noncommutative quasi-Banach symmetric space $\mathcal E(\mathbb T^d_\theta)$ (see Definition \ref{NC Sym}), let us define
\begin{equation}\label{A_gamma}
A_\gamma : \mathcal E(\mathbb T^d_\theta)\to \mathcal E(\mathbb T^d_\theta),\quad
A_\gamma(x)=e^{\theta}_\gamma x, \,\ x\in \mathcal E(\mathbb T^d_\theta).
\end{equation}

\begin{lemma}\label{A_gamma1}
Let $\gamma\in \mathbb{Z}^d.$ Then the operator $A_\gamma$ defined by \eqref{A_gamma} is an isometry on every noncommutative quasi-Banach symmetric space $\mathcal E(\mathbb{T}^d_\theta)$ and we have
$$
\|A_\gamma(x)\|_{\mathcal E(\mathbb T^d_\theta)} = \|x\|_{\mathcal E(\mathbb T^d_\theta)},
\quad x\in \mathcal E(\mathbb{T}^d_\theta).
$$
\end{lemma}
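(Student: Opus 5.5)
The plan is to show that left multiplication by the unitary $e^\theta_\gamma$ preserves the singular value function. Once $\mu(e^\theta_\gamma x)=\mu(x)$ is known, Definition~\ref{NC Sym} and the construction $\|x\|_{\mathcal E(\mathbb T^d_\theta)}=\|\mu(x)\|_E$ give the claim at once. The argument will also show that $A_\gamma$ maps $\mathcal E(\mathbb T^d_\theta)$ into itself.

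\textbf{Step 1: $e^\theta_\gamma$ is unitary in $\mathbb T^d_\theta$.} By \eqref{1}, or directly from $e^\theta_\gamma=U_1^{\gamma_1}\cdots U_d^{\gamma_d}$ as a product of unitaries, we have $(e^\theta_\gamma)^*e^\theta_\gamma=e^\theta_\gamma(e^\theta_\gamma)^*=1$. Consequently, for $x\in S(\mathbb T^d_\theta)$ the product $e^\theta_\gamma x$ is again $\tau_\theta$-measurable, since $S(\mathcal M)$ is a $*$-algebra containing $\mathcal M$.

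\textbf{Step 2: equality of moduli.} Using the strong product in $S(\mathcal M)$, we get $|e^\theta_\gamma x|^2=(e^\theta_\gamma x)^*(e^\theta_\gamma x)=x^*(e^\theta_\gamma)^*e^\theta_\gamma x=x^*x=|x|^2$. Hence $|e^\theta_\gamma x|=|x|$ by uniqueness of the positive square root. The only point needing care is the justification of the algebraic manipulation for unbounded $x$. I expect this to be the only mild obstacle, and I would handle it by working in the algebra $S(\mathcal M)$ of $\tau$-measurable operators, where associativity and the adjoint rule $(ab)^*=b^*a^*$ hold for strong products.

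\textbf{Step 3: conclusion.} Since $|e^\theta_\gamma x|=|x|$, the spectral projections $\chi_{(s,\infty)}(|e^\theta_\gamma x|)$ and $\chi_{(s,\infty)}(|x|)$ coincide. Therefore $d(s,|e^\theta_\gamma x|)=d(s,|x|)$ for all $s\ge0$, and so $\mu(e^\theta_\gamma x)=\mu(x)$ by the definition of $\mu$. Definition~\ref{NC Sym}, applied in both directions, then gives $e^\theta_\gamma x\in\mathcal E(\mathbb T^d_\theta)$ and $\|A_\gamma x\|_{\mathcal E(\mathbb T^d_\theta)}=\|\mu(x)\|_E=\|x\|_{\mathcal E(\mathbb T^d_\theta)}$. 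Linearity of $A_\gamma$ is clear. Finally, $A_\gamma$ is surjective with inverse $x\mapsto(e^\theta_\gamma)^*x$, which is again left multiplication by a unitary, so $A_\gamma$ is an isometry onto $\mathcal E(\mathbb T^d_\theta)$.
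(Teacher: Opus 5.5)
Your proposal is correct and follows the same route as the paper: both compute $|e^\theta_\gamma x|^2=x^*(e^\theta_\gamma)^*e^\theta_\gamma x=x^*x$, deduce $|e^\theta_\gamma x|=|x|$, and conclude that the (quasi-)norms agree. Your added remarks on $\tau_\theta$-measurability, on the equality of $\mu$ via distribution functions, and on surjectivity make explicit what the paper leaves implicit.
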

\begin{proof}
Fix $\gamma\in \mathbb{Z}^d.$ Then for any $x\in \mathcal E(\mathbb T^d_\theta)$ we have $$
|e^{\theta}_\gamma  x|=\big((e^{\theta}_\gamma  x)^*(e^{\theta}_\gamma  x)\big)^{1/2}=\big(x^*(e^{\theta}_\gamma )^*e^{\theta}_\gamma x\big)^{1/2}=\big(x^*x\big)^{1/2}=|x|.
$$
Therefore, we have
$$\|A_\gamma(x)\|_{\mathcal E(\mathbb T^d_\theta)}=\|e^{\theta}_\gamma x\|_{\mathcal E(\mathbb T^d_\theta)}=\||e^{\theta}_\gamma x|\|_{\mathcal E(\mathbb T^d_\theta)}=\||x|\|_{\mathcal E(\mathbb T^d_\theta)}=\|x\|_{\mathcal E(\mathbb T^d_\theta)}, \,\ x\in \mathcal E(\mathbb T^d_\theta),$$
thereby completing the proof.
\end{proof}
\begin{definition}\label{intervals}
Let $\leq_d$ be a total order on $\mathbb Z^d$. For any $\gamma_1,\gamma_2 \in \mathbb Z^d,$ we set
$$
[\gamma_1,\gamma_2]
:= 
\bigl\{\gamma \in \mathbb Z^d : \gamma_1 \leq_d \gamma \leq_d \gamma_2\bigr\},
$$
$$
[\gamma_1,\infty)
:=
\bigl\{\gamma \in \mathbb Z^d : \gamma_1 \leq_d \gamma\bigr\},
\quad
(-\infty,\gamma_2]
:=
\bigl\{\gamma \in \mathbb Z^d : \gamma \leq_d \gamma_2\bigr\}.
$$
These sets will be called closed intervals (finite or semi-infinite) in $ \mathbb{Z}^d.$ 
\end{definition}
We now state an extension of Proposition \ref{lem1} that shows the projection $T_A$ in \eqref{TA} is well-defined and bounded for any interval $A\subset \mathbb{Z}^d.$

\begin{lemma}\label{lem2} Let $1<p<\infty.$ Let $A \subset \mathbb{Z}^d$ be an interval as in Definition \ref{intervals}. Then, there exists a bounded projection $T_A$ defined by \eqref{TA} along $A$ on $L_p(\mathbb{T}^d_\theta).$ 
\end{lemma}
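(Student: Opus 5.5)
The plan is to reduce every interval to the positive cone $\mathbb Z^d_+=[0,\infty)$ by translation, using the isometries $A_\gamma$ of Lemma~\ref{A_gamma1}, and then to take differences of the resulting projections. By \eqref{1}, $A_\gamma$ maps $e^\theta_\eta$ to a unimodular multiple of $e^\theta_{\gamma+\eta}$. Hence it maps the eigenspace $L_p(\mathbb T^d_\theta)_\eta$ onto $L_p(\mathbb T^d_\theta)_{\gamma+\eta}$, using the Lemma identifying $L_p(\mathbb T^d_\theta)_\eta=[e^\theta_\eta]$.

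\textbf{Semi-infinite intervals $[\gamma_1,\infty)$.} Set $T_{[\gamma_1,\infty)}:=A_{\gamma_1}T_{\mathbb Z^d_+}A_{\gamma_1}^{-1}$, where $A_{\gamma_1}^{-1}$ is left multiplication by $(e^\theta_{\gamma_1})^*$, again an isometry.
\begin{itemize}
\item Boundedness: this operator has norm at most $K_p$ by Proposition~\ref{lem1} and Lemma~\ref{A_gamma1}.
\item Action on $e^\theta_\eta$: $A_{\gamma_1}^{-1}e^\theta_\eta$ is a scalar multiple of $e^\theta_{\eta-\gamma_1}$. It is kept by $T_{\mathbb Z^d_+}$ iff $0\le_d\eta-\gamma_1$, which holds iff $\gamma_1\le_d\eta$; this uses that the order is compatible with addition, as assumed in the proof of Proposition~\ref{lem1}. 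Applying $A_{\gamma_1}$ then returns the original vector.
\end{itemize}
So the operator agrees with \eqref{TA} on every eigenspace.

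\textbf{Intervals $(-\infty,\gamma_2]$.} Here $(-\infty,\gamma_2]=\mathbb Z^d\setminus(\gamma_2,\infty)$. In $\mathbb Z^d$ with a total order there is generally no immediate successor, so $(\gamma_2,\infty)$ is not a closed interval. Two routes are available.
\begin{itemize}
\item Treat the open cone $\{\gamma>_d0\}$ directly: rerun the proof of Proposition~\ref{lem1} with $h_N(t)=\sum_{j\ge1}\widehat F_N(j)t^j$. The multiplier becomes $\frac12(I-Q+iH)$, and by \eqref{3} the condition $a\cdot\gamma>0$ on $F$ is equivalent to $\gamma>_d0$.
\item Use the symmetry $\eta\mapsto-\eta$: the reversed order is again a compatible total order, and Proposition~\ref{lem1} applies to it, giving a projection onto $\{\gamma\le_d0\}$. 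Translating by $A_{\gamma_2}$ then gives $T_{(-\infty,\gamma_2]}$.
\end{itemize}
The second route is cleaner, and I will use it.

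\textbf{Finite intervals.} Set $T_{[\gamma_1,\gamma_2]}:=T_{[\gamma_1,\infty)}T_{(-\infty,\gamma_2]}$. This is bounded by $K_p^2$, and on each $e^\theta_\eta$ it acts by the indicator of $[\gamma_1,\gamma_2]$.

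\textbf{Identifying the projections.} It remains to check that each operator is the projection in \eqref{TA}, i.e.\ idempotent and acting as identity or zero on each $X_\gamma$. These properties are verified on $\mathcal P_p$, where everything is finite sums. They extend to all of $L_p(\mathbb T^d_\theta)$ by density (Lemma~\ref{Pp-dense}) and boundedness.

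The main obstacle is the careful bookkeeping for $(-\infty,\gamma_2]$: confirming that the reversed order satisfies the hypotheses of Proposition~\ref{lem1}, and that the translation conjugation preserves the order-membership correctly. The unimodular phase factors from \eqref{1} cancel under conjugation and cause no difficulty.
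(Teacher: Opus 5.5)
Your proposal is correct and follows the paper's proof essentially step by step. You conjugate $T_{\mathbb Z^d_+}$ by the isometries $A_\gamma$ to get $[\gamma_1,\infty)$, use the reversed order for $(-\infty,\gamma_2]$, and set $T_{[\gamma_1,\gamma_2]}=T_{[\gamma_1,\infty)}T_{(-\infty,\gamma_2]}$, with the identification extended by density. Two of your details are actually cleaner than the paper's. Taking $A_{\gamma_1}^{-1}$ to be left multiplication by $(e^\theta_{\gamma_1})^*$, rather than the paper's $A_{-\gamma_1}$, makes the conjugated operator an exact projection, since by \eqref{1} the product $e^\theta_{\gamma_1}e^\theta_{-\gamma_1}$ is in general only a unimodular scalar. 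Your bound $K_p^2$ for finite intervals is also the one the argument actually gives.
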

\begin{proof}
Let  $1<p<\infty.$
By Proposition~\ref{lem1}, there exists a bounded projection $
T_{\mathbb Z^{d}_{+}}:L_{p}(\mathbb T^{d}_{\theta})\to L_{p}(\mathbb T^{d}_{\theta})$
along $\mathbb Z^{d}_{+}=\{\gamma\in\mathbb Z^{d}: \gamma\geq_d 0\}$
such that
$$
\|T_{\mathbb Z^{d}_{+}}\|_{L_{p}(\mathbb T^{d}_{\theta})\to L_{p}(\mathbb T^{d}_{\theta})}
\le K_p.
$$
where $K_p>0$ is the same as in Proposition \ref{lem1}.
Fix $\gamma_{0}\in\mathbb Z^{d}$ and denote $[\gamma_{0},\infty)  := \{\gamma\in\mathbb Z^{d}:\ \gamma\geq_d\gamma_{0}\}.$
 To shift the projection $T_{\mathbb Z^{d}_{+}}$ to $(\gamma_{0},\infty)$, define, as in \eqref{A_gamma},
$$
A_{\gamma_{0}}(x)=e^{\theta}_{\gamma_{0}}x,
\quad x\in L_{p}(\mathbb T^{d}_{\theta}),
$$
which is an isometry on $L_p(\mathbb T^d_\theta)$ by Lemma \ref{A_gamma1} and satisfies
$$
A_{\gamma_{0}}\,L_{p}(\mathbb T^{d}_{\theta})_{\gamma}
  = L_{p}(\mathbb T^{d}_{\theta})_{\gamma+\gamma_{0}} .
$$
Indeed, if $x_\gamma\in L_{p}(\mathbb T^{d}_{\theta})_{\gamma}$, then
$$
\pi_t(A_{\gamma_{0}}x_\gamma)
 = t^{\gamma_{0}}e^{\theta}_{\gamma_{0}} t^{\gamma}x_\gamma
 = t^{\gamma+\gamma_{0}}(A_{\gamma_{0}}x_\gamma),
$$
so $A_{\gamma_{0}}x_\gamma\in L_{p}(\mathbb T^{d}_{\theta})_{\gamma+\gamma_{0}}$.
Consequently, we have
$$
(A_{\gamma_{0}}T_{\mathbb Z^{d}_{+}}A_{-\gamma_{0}})x_\gamma
 =
\begin{cases}
x_\gamma, & \gamma \geq_d\gamma_{0},\\
0,        & \gamma <_d\gamma_{0}.
\end{cases}
$$
Set
$$
T_{[\gamma_{0},\infty)}
 := A_{\gamma_{0}}T_{\mathbb Z^{d}_{+}}A_{-\gamma_{0}}.
$$
Then it is precisely the projection along $[\gamma_{0},\infty)$. Since the operators $A_{\gamma_0}$ are isometries on $L_p(\mathbb T^d_\theta)$
by Lemma \ref{A_gamma1}, we obtain
$$
\|T_{[\gamma_0,\infty)}\|_{L_p(\mathbb T^d_\theta)\to L_p(\mathbb T^d_\theta)}
= \|A_{\gamma_{0}}T_{\mathbb Z^{d}_{+}}A_{-\gamma_{0}}\|_{L_p(\mathbb T^d_\theta)\to L_p(\mathbb T^d_\theta)}=\|T_{\mathbb Z^d_+}\|_{L_p(\mathbb T^d_\theta)\to L_p(\mathbb T^d_\theta)}
\le K_p .
$$

To obtain projections onto intervals of the form $(-\infty,\gamma_{2}]$,
we apply the same construction as above, but with the opposite direction order on $\mathbb Z^{d}.$ In this way, we also obtain a bounded projection
$T_{(-\infty,\gamma_{2}]}$, and therefore there exists a bounded projection
along any semi-infinite interval in $\mathbb Z^{d}$.

Now let $[\gamma_{1},\gamma_{2}]$ be an arbitrary finite closed intervals in $\mathbb Z^{d}$.
It can be written as
$$
[\gamma_{1},\gamma_{2}]
 = [\gamma_{1},\infty)\cap(-\infty,\gamma_{2}].
$$
Let $T_{[\gamma_{1},\infty)}$ and $T_{(-\infty,\gamma_{2}]}$ denote the corresponding
semi-infinite interval projections onto $[\gamma_1, \infty)$ and $(-\infty, \gamma_2],$ respectively. For $x = \sum\limits_{\gamma\in \mathbb Z^d} x_\gamma \in \mathcal{P}_p$
with $x_\gamma \in L_p(\mathbb T^d_\theta)_\gamma$, we have 
$$
T_{[\gamma_{1},\infty)}x = \sum\limits_{\gamma\geq_d\gamma_{1}}x_\gamma,
\quad
T_{(-\infty,\gamma_{2}]}x = \sum\limits_{\gamma\leq_d\gamma_{2}}x_\gamma,
$$
and 
$$
T_{[\gamma_{1},\infty)}T_{(-\infty,\gamma_{2}]}x
 = T_{[\gamma_{1},\infty)}\Bigl(\sum\limits_{\gamma\leq_d\gamma_{2}}x_\gamma\Bigr)
 = \sum\limits_{\gamma\in[\gamma_{1},\gamma_{2}]}x_\gamma.
$$
Thus,
\begin{equation}\label{T_interval}
    T_{[\gamma_{1},\gamma_{2}]}x
 := \sum\limits_{\gamma\in[\gamma_{1},\gamma_{2}]}x_\gamma
 = T_{[\gamma_{1},\infty)}T_{(-\infty,\gamma_{2}]}x,
\end{equation}
so $T_{[\gamma_{1},\gamma_{2}]}
   = T_{[\gamma_{1},\infty)}T_{(-\infty,\gamma_{2}]}$ and
$$
\|T_{[\gamma_{1},\gamma_{2}]}x\|_{L_{p}(\mathbb T^{d}_{\theta})}
 \leq K_p\,\|T_{(-\infty,\gamma_{2}]}x\|_{L_{p}(\mathbb T^{d}_{\theta})}
 \le K^2_p\|x\|_{L_{p}(\mathbb T^{d}_{\theta})}, \, x\in \mathcal{P}_p.
$$
Since $\mathcal{P}_p$ is dense in $L_p(\mathbb T^{d}_{\theta})$, and the above estimates show that 
$T_{[\gamma_{1},\gamma_{2}]}$ is bounded on $\mathcal{P}_p$, 
it follows by continuity that $T_{[\gamma_{1},\gamma_{2}]}$ extends uniquely 
to a bounded projection on $L_p(\mathbb T^{d}_{\theta})$ with
$$
\|T_{[\gamma_{1},\gamma_{2}]}x\|_{L_{p}(\mathbb T^{d}_{\theta})}
\le K_p\|x\|_{ L_{p}(\mathbb T^{d}_{\theta})}, \quad x\in L_{p}(\mathbb T^{d}_{\theta}).
$$
This completes the proof.
\end{proof}

\begin{rem}\label{rem:union-interval-projection}
Let $\mathcal J,\mathcal K\subset \mathbb Z^{d}$ be intervals.  
For $x=\sum\limits_{\gamma\in F}x_\gamma\in \mathcal P_p$, where $F\subset\mathbb Z^d$ is a finite set and
$x_\gamma\in L_p(\mathbb T^d_\theta)_\gamma$, formula~\eqref{TA} yields
$$
T_{\mathcal J}x=\sum\limits_{\gamma\in F\cap\mathcal J}x_\gamma,\quad
T_{\mathcal K}x=\sum\limits_{\gamma\in F\cap\mathcal K}x_\gamma,\quad
T_{\mathcal J\cap\mathcal K}x=\sum\limits_{\gamma\in F\cap(\mathcal J\cap\mathcal K)}x_\gamma.
$$
Hence,
$$
T_{\mathcal J}x+T_{\mathcal K}x-T_{\mathcal J\cap\mathcal K}x
=\sum\limits_{\gamma\in F\cap(\mathcal J\cup\mathcal K)}x_\gamma
= T_{\mathcal J\cup\mathcal K}x,
\quad x\in\mathcal P_p.
$$
Since $\mathcal P_p$ is dense in $L_p(\mathbb T^d_\theta)$ (see Lemma~\ref{Pp-dense}) and
$T_{\mathcal J},T_{\mathcal K},T_{\mathcal J\cap\mathcal K},$ and $T_{\mathcal J\cup\mathcal K}$
are bounded on $L_p(\mathbb T^d_\theta)$, the above identity extends to 
$L_p(\mathbb T^d_\theta)$. In particular, on $L_p(\mathbb T^d_\theta)$ we have
\begin{equation}\label{TJK}
  T_{\mathcal J\cup\mathcal K}
= T_{\mathcal J}+T_{\mathcal K}-T_{\mathcal J\cap\mathcal K}.  
\end{equation}
\end{rem}

%In particular, for any subset $A \subset \mathbb{Z}^d$ we recall that $$T_A(x) = \sum\limits_{\gamma \in A} x_\gamma,$$
%where $x_\gamma \in L_p(\mathbb{T}^d_{\theta})_{\gamma}$. By the above theorem, $T_A$ is a bounded projection on $L_p(\mathbb{T}^d_\theta)$. Moreover, the range of $T_A$ is $L_p(\mathbb{T}^d_\theta)_A$, while the kernel of $T_A$ is
%L_p(\mathbb{T}^d_\theta)_{\mathbb{Z}^d \setminus A}.$

\begin{definition}
For each fixed $z \in \mathbb{Z}$ and $k = 1,2,\dots,d$, we define the upper and 
lower (open, closed, and boundary) half-spaces in $\mathbb{Z}^d$ by
\begin{equation}\label{Hk}
\begin{aligned}
H_k(z)&= \{\gamma \in \mathbb{Z}^d : \gamma_k > z\}, \quad
\overline{H}_k(z)= \{\gamma \in \mathbb{Z}^d : \gamma_k \ge z\}, \\
\partial H_k(z)&= \{\gamma \in \mathbb{Z}^d : \gamma_k = z\}, \quad
H_k^{-}(z)= \{\gamma \in \mathbb{Z}^d : \gamma_k < z\}, \\
\overline{H}_k^{-}(z)&= \{\gamma \in \mathbb{Z}^d : \gamma_k \le z\}. 
\end{aligned}
\end{equation}
\end{definition}

%This order is total and translation-invariant:$$\gamma<_{\mathrm{lex}}\eta \ \Rightarrow\ \gamma+\xi<_{\mathrm{lex}}\eta+\xi.$$
In the next lemma, we show that for every half-space $A$ in \eqref{Hk} the projection $T_A$ defined by \eqref{TA} is a bounded operator on $L_p(\mathbb T^d_\theta)$.

\begin{lemma}\label{TAH} Let $1<p<\infty.$
Let $A$ be one of the half-spaces defined in \eqref{Hk}. Then the  projection $T_A$ along $A$ defined by \eqref{TA} is bounded on $L_p(\mathbb T^d_\theta)$ and we have
$$
\|T_A x\|_{L_p(\mathbb{T}^d_{\theta})} \leq C_p\|x\|_{L_p(\mathbb{T}^d_{\theta})}  ,\quad x\in L_p(\mathbb T^d_\theta),
$$
where $C_p > 0$ is a constant independent of $x$.
\end{lemma}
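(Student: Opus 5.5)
The plan is to reduce everything to a one-dimensional transference, in the spirit of Proposition~\ref{lem1}, using the subrepresentation of $\{\pi_t\}$ along the $k$-th coordinate. Fix $k\in\{1,\dots,d\}$ and set $R_t:=\pi_{\vartheta_k(t)}$, $t\in\mathbb T$, where $\vartheta_k(t)=(1,\dots,1,t,1,\dots,1)$ has $t$ in the $k$-th slot. This is the construction of Proposition~\ref{lem1} with $\eta$ the $k$-th standard basis vector. Then $x_\gamma\in L_p(\mathbb T^d_\theta)_\gamma$ satisfies $R_t(x_\gamma)=t^{\gamma_k}x_\gamma$, so $x_\gamma\in E_{\gamma_k}L_p(\mathbb T^d_\theta)$.

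The operators $T_N$ of \eqref{TN} built from this $R_t$ obey the uniform bound \eqref{T_N1}, $\|T_N\|\le K_p$; that argument used only the UMD property of $L_p(\mathbb T^d_\theta)$ and the isometry \eqref{Rv}, not the choice of $\eta$. On $x=\sum_{\gamma\in F}x_\gamma\in\mathcal P_p$ we have $T_Nx=\sum_{\gamma\in F}\widehat h_N(\gamma_k)x_\gamma\to\sum_{\gamma\in F,\,\gamma_k\ge0}x_\gamma$. This time no rational approximation step is needed. By density of $\mathcal P_p$ (Lemma~\ref{Pp-dense}) and the Banach--Steinhaus theorem, the strong limit exists and equals $T_{\overline H_k(0)}$, with norm at most $K_p$.

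Next, the shift to arbitrary $z$. Exactly as in Lemma~\ref{lem2}, using the isometries $A_\gamma$ of Lemma~\ref{A_gamma1}, take $\gamma_0=z\,e_k$ and set $T_{\overline H_k(z)}:=A_{\gamma_0}T_{\overline H_k(0)}A_{-\gamma_0}$. Since $A_{\gamma_0}$ maps $L_p(\mathbb T^d_\theta)_\gamma$ onto $L_p(\mathbb T^d_\theta)_{\gamma+\gamma_0}$, this operator acts as the identity on the eigenspaces with $\gamma_k\ge z$ and as $0$ otherwise, and its norm is at most $K_p$. One small point needs checking: $A_{-\gamma_0}$ is the inverse of $A_{\gamma_0}$ only up to the unimodular scalar coming from \eqref{1}. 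That scalar is harmless, but it is cleaner to take $A_{\gamma_0}^{-1}=(e^\theta_{\gamma_0})^*\cdot$, which is also an isometry by the same computation as in Lemma~\ref{A_gamma1}.

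The remaining half-spaces follow by complements and differences, verified on $\mathcal P_p$ and extended by density as in Remark~\ref{rem:union-interval-projection}:
$$T_{H_k(z)}=T_{\overline H_k(z+1)},\qquad T_{H_k^-(z)}=I-T_{\overline H_k(z)},\qquad T_{\overline H_k^-(z)}=I-T_{\overline H_k(z+1)},$$
$$T_{\partial H_k(z)}=T_{\overline H_k(z)}-T_{\overline H_k(z+1)}.$$
This gives the bound with $C_p=1+2K_p$, say, independent of $x$, $z$ and $k$.

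The only step needing care is confirming that the proof of \eqref{T_N1} carries over verbatim to this $R_t$, since it was written inside the proof of Proposition~\ref{lem1}. This should be routine because $\vartheta_k$ is a continuous homomorphism $\mathbb T\to\mathbb T^d$. An alternative route is to choose a total order $\le_d$ with $k$-th coordinate priority and use Lemma~\ref{lem2}, but the direct transference above avoids that.
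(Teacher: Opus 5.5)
Your proof is correct, but it takes a different and more direct route than the paper.

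\textbf{The paper's route.} The paper stays within total orders. It writes $H_k(z)$ as the union of the two semi-infinite intervals $\{\gamma:\gamma_k^0\le^{(i)}_{\mathrm{lex}}\gamma\}$, $i=1,2$. These are taken for two lexicographic orders that compare the $k$-th coordinate first and then the remaining coordinates with sign $+$, respectively $-$. It bounds each projection by Lemma~\ref{lem2}, hence by Proposition~\ref{lem1} with its separating vector $a$ and rational approximation. It then combines them via \eqref{TJK} and says the other half-spaces follow ``verbatim''.

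\textbf{Your route.} You note that a coordinate half-space is the nonnegative set of the single character $\gamma\mapsto\gamma_k$. So the transference of Proposition~\ref{lem1} can be run once, with the fixed vector $\eta=e_k$. This gives $T_{\overline H_k(0)}$ as the strong limit of one fixed sequence $T_N$. The shift by $U_k^{z}$ and the complement/difference identities then cover all five sets in \eqref{Hk}, with the explicit constant $1+2K_p$ uniform in $k$ and $z$.

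\textbf{What your route buys.} You need no total orders and no separating-hyperplane lemma. You also avoid the boundedness of $T_{\mathcal J\cap\mathcal K}$ that \eqref{TJK} presupposes; in the paper it would have to be supplied as $T_{\mathcal J}T_{\mathcal K}$. The boundary set $\partial H_k(z)$, which is not a union of semi-infinite intervals, is handled explicitly rather than ``verbatim''. Finally, your $T_N$ does not depend on $x$, so the density/Banach--Steinhaus step is genuinely clean. In Proposition~\ref{lem1}, by contrast, the vector $a$ (and with it $T_N$) depends on the support of $x$, and that step works only because $K_p$ is independent of $a$.

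\textbf{What the paper's route buys.} It builds interval projections for arbitrary total orders, which the paper reuses later, e.g.\ in \eqref{LpH} and for the partial sum operators.

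\textbf{A small remark.} For $\gamma_0=ze_k$ the phase in \eqref{1} vanishes, so $e^\theta_{-\gamma_0}=U_k^{-z}=(e^\theta_{\gamma_0})^*$ exactly, and your caveat is moot. In general, a nontrivial unimodular factor would not be entirely harmless: it would give a scalar multiple of the projection rather than the projection itself. So conjugating by the adjoint, as you suggest, is the right choice.
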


\begin{proof} Let $1<p<\infty.$ For $\gamma=(\gamma_1,\dots,\gamma_d)\in\mathbb Z^d,$ we define the
coordinate functionals
\begin{equation}\label{fk}
f_k(\gamma)=\gamma_k ,\quad k=1,\dots,d.
\end{equation}
Fix $z\in\mathbb{Z}.$ Thus we may choose $\gamma^0_k\in H_k(z)=\{\gamma\in\mathbb Z^d:\gamma_k>z\}$ such that
$$\gamma^0_k:=(0,\dots, z+1, \dots0),$$
that is, the vector whose $k$-th coordinate is nonzero, which equals $z+1.$ 
We represent $H_k(z)$ as a union of two semi-infinite intervals for suitable lexicographic orders. We use lexicographic comparisons starting with the $k$-th coordinate.
Let $\le_{\mathrm{lex}}^{(1)}$ be the lexicographic order induced by the ordered family of coordinate functionals defined in \eqref{fk},
$$
(f_k,f_1,\dots,f_{k-1},f_{k+1},\dots,f_d),
$$
that is, for $\gamma,\eta\in\mathbb Z^d$ we write $\gamma<_{\mathrm{lex}}^{(1)}\eta$
if, letting
$$
(g_1,\dots,g_d):=(f_k,f_1,\dots,f_{k-1},f_{k+1},\dots,f_d),
$$
so that
$$
g_1=f_k,\,g_2=f_1, \dots, g_k=f_{k-1},\, g_{k+1}=f_{k+1},\dots, g_d=f_d,
$$
there exists $j\in\{1,\dots,d\}$ such that
$$
g_1(\gamma)=g_1(\eta),\ \dots,\ g_{j-1}(\gamma)=g_{j-1}(\eta),
\quad\text{and}\quad
g_j(\gamma)<g_j(\eta).
$$
Similarly, let $\le_{\mathrm{lex}}^{(2)}$ be the lexicographic order associated with
$$
(f_k,-f_1,\dots,-f_{k-1},-f_{k+1},\dots,-f_d).
$$
Then
\begin{equation}\label{eq:Hk-union-intervals}
H_k(z)
=
\{\gamma\in\mathbb Z^d:\ \gamma_k^0 \le_{\mathrm{lex}}^{(1)} \gamma\}
\ \cup\
\{\gamma\in\mathbb Z^d:\ \gamma_k^0 \le_{\mathrm{lex}}^{(2)} \gamma\}.
\end{equation}
Indeed, if $\gamma\in H_k(z)$ then $\gamma_k\ge z+1$, hence $f_k(\gamma)\ge f_k(\gamma_k^0)$.
If $f_k(\gamma)>f_k(\gamma_k^0)$, we have $\gamma_k^0<_{\mathrm{lex}}^{(i)}\gamma$ for both $i=1,2$.
If $f_k(\gamma)=f_k(\gamma_k^0)$, look at the first index $j\neq k$ (in the order $1,2,\dots,k-1,k+1,\dots,d$)
for which $\gamma_j\neq0$; if $\gamma_j>0$ then $\gamma_k^0\le_{\mathrm{lex}}^{(1)}\gamma$, while if $\gamma_j<0$
the sign flip yields $\gamma_k^0\le_{\mathrm{lex}}^{(2)}\gamma$.
Conversely, if $\gamma_k^0\le_{\mathrm{lex}}^{(i)}\gamma$ for some $i\in\{1,2\}$, then the first comparison is made
with $f_k$, hence $f_k(\gamma)\ge f_k(\gamma_k^0)$, i.e.\ $\gamma_k\ge z+1$, so $\gamma\in H_k(z)$.

By the discussion above, both
$\le_{\mathrm{lex}}^{(1)}$ and $\le_{\mathrm{lex}}^{(2)}$ are total orders on
$\mathbb Z^d$, hence, they are admissible in the sense of
Lemma~\ref{lem2}.

This observation explains the use of Lemma~\ref{lem2} in the half-space decomposition. \ $H_k(z)$ is the union of two semi-infinite intervals, each taken with
respect to an admissible order from Lemma~\ref{lem2}. Therefore, Lemma~\ref{lem2} yields
boundedness of the corresponding interval projections for each order, and the
desired bound for $T_{H_k(z)}$ follows by combining the two estimates (via
\eqref{TJK}). Hence, there exists a constant $C_p>0$ such that
$$
\|T_{H_k(z)}x\|_{L_p(\mathbb{T}^d_{\theta})} \leq C_p\|x\|_{ L_p(\mathbb{T}^d_{\theta})}, \quad x\in L_p(\mathbb{T}^d_{\theta}).
$$
The argument above applies verbatim to each of the half spaces $A$ in \eqref{Hk},  so we obtain
$$
\|T_A x\|_{L_p(\mathbb{T}^d_{\theta})} 
   \le C_p \|x\|_{L_p(\mathbb{T}^d_{\theta})},
\quad x\in L_p(\mathbb{T}^d_{\theta}).
$$
 This completes the proof.
\end{proof}
\begin{definition}\label{numeration}
We say that a subset $A \subset \mathbb{Z}^d$ is of type $(k,l)$ if there exist half-spaces $A_{ij} \subset \mathbb{Z}^d$, $i=1,\dots,k$, $j=1,\dots,l$, defined as in~\eqref{Hk}, such that
$$
A = \bigcup_{i=1}^k \ \bigcap_{j=1}^l A_{ij}.
$$
  A bijection $\varphi : \mathbb{Z}^d \to \mathbb{N} $ is called an enumeration of the elements of $ \mathbb{Z}^d $. We call an enumeration the $(k,l)$-enumeration if for every $n \in \mathbb{N}$ the set $\varphi^{-1}([1,n])$ is of type $(k,l)$.  
\end{definition}
For $m\in\mathbb{N}$ we define the $d$-dimensional cube
$$
K_m=
\bigl\{
x=(x_1,\dots,x_d)\in\mathbb{Z}^d:\;
-m\le x_i\le m\ (i=1,\dots,d)
\bigr\}.
$$
The cube $K_m$ can be written as the intersection of $2d$ half-spaces
$$
K_m=\bigcap_{i=1}^{d}\overline{H}_{i}(-m)
\;\cap\;
\bigcap_{i=1}^{d}\overline{H}^-_{i}(m),
$$
Indeed, $x=(x_1,\dots,x_d)\in K_m$ means that $-m\le x_i\le m$ for every
$i=1,\dots,d$. The condition $x_i\ge -m$ is equivalent to $x\in\overline H_i(-m)$,
while $x_i\le m$ is equivalent to $x\in\overline H_i^{-}(m)$. Intersecting these
$2d$ constraints over $i=1,\dots,d$ gives exactly the cube $K_m$. Thus, $K_m$ is a set of type $(1,2d)$. Each cube $K_m$ has $2d$ faces given by
$$
\Gamma_{m}^{2\ell-1}
=\bigl\{x\in K_m:\; x_\ell=-m,\, |x_j|<m,\ j\neq\ell \bigr\},
\quad
\Gamma_{m}^{2\ell}
=\bigl\{x\in K_m:\; x_\ell=m,\, |x_j|<m,\ j\neq\ell \bigr\},
$$
for $\ell=1,\dots,d$. Thus, each face can be written as
$$
\Gamma_{m}^{2\ell}
= \partial H_\ell(m)\ \cap\
\bigcap_{j\neq \ell}\overline H_j(-m+1)\ \cap\
\bigcap_{j\neq \ell}\overline H_j^{-}(m-1),
\quad \ell=1,\dots,d,
$$
and
$$
\Gamma_{m}^{2\ell-1}
= \partial H_\ell(-m)\ \cap\
\bigcap_{j\neq \ell}\overline H_j(-m+1)\ \cap\
\bigcap_{j\neq \ell}\overline H_j^{-}(m-1),
\quad \ell=1,\dots,d.
$$
In particular, each face $\Gamma_m^{k}$ is the intersection of $2(d-1)=2d-2$
coordinate half-spaces together with one boundary half-space $\partial H_\ell(\pm m)$.
Therefore, each $\Gamma_m^{k}$ is of type $(1,2d-1)$.

Fix $\ell\in\{1,\dots,d\}$. For $k\in\{2\ell-1,2\ell\}$ (so that the $\ell$-th coordinate is fixed at $\pm m$ on $\Gamma_m^{k}$), we define the projection
$$
\Phi_{m}^{\ell}:\Gamma_{m}^{k}\to\mathbb{Z}^{\,d-1},\quad
\Phi_{m}^{\ell}(x_1,\dots,x_d)=(x_1,\dots,x_{\ell-1},x_{\ell+1},\dots,x_d),
$$
obtained by deleting the coordinate fixed at $\pm m$.

\begin{lemma}\label{phi}
For every $d \ge 1$ there exists an enumeration $\varphi_d : \mathbb{Z}^d \to \mathbb{N}$ such that, for every $N \in \mathbb{N}$, the segment 
$$
\varphi_d^{-1}([1,N])
$$
is a set of type $(k_d,l_d)$, where the parameters $k_d,l_d$ depend only on $d$.
\end{lemma}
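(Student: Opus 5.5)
We construct $\varphi_d$ by induction on $d$, enumerating $\mathbb Z^d$ cube by cube: first $K_0=\{0\}$, then the shell $K_m\setminus K_{m-1}$ for $m=1,2,\dots$. Within each shell, the faces $\Gamma_m^1,\dots,\Gamma_m^{2d}$ are enumerated one after another, using the lower-dimensional enumeration $\varphi_{d-1}$ transported via $\Phi_m^\ell$. The lower-dimensional pieces (edges, corners) left over, where some other coordinate also equals $\pm m$, are then handled separately.

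To keep the bookkeeping clean, we order the shell by a different partition. Set
$$S_{m,\ell}^{\pm}=\{x\in K_m: x_\ell=\pm m,\ |x_j|<m \text{ for } j<\ell\},\quad \ell=1,\dots,d.$$
These $2d$ sets partition $K_m\setminus K_{m-1}$, up to the overlap of $x_\ell=m$ and $x_\ell=-m$, which is impossible for $m\ge1$.

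Each $S_{m,\ell}^{\pm}$ is identified through $\Phi_m^\ell$ with a product $\{|y_j|\le m-1\}_{j<\ell}\times\{|y_j|\le m\}_{j>\ell}$, a box in $\mathbb Z^{d-1}$. We enumerate it by restricting $\varphi_{d-1}$ to that box, i.e.\ listing the box's points in increasing $\varphi_{d-1}$-order.

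The induction hypothesis must be strengthened to make this work. We require that for every $N$ and every box $B=\prod_j[a_j,b_j]$, the set $\varphi_{d-1}^{-1}([1,N])\cap B$ be of type $(k_{d-1},l_{d-1}+2(d-1))$. This is automatic: intersecting a type-$(k,l)$ set with a box amounts to adding $2(d-1)$ half-spaces to each of the $k$ intersections.

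We now describe an initial segment $\varphi_d^{-1}([1,N])$. It has the form
$$K_{m-1}\cup\bigcup_{\text{earlier }(\ell,\pm)}S_{m,\ell}^{\pm}\cup\bigl(\text{initial part of the current }S_{m,\ell}^{\pm}\bigr).$$
$K_{m-1}$ is of type $(1,2d)$. Each earlier $S_{m,\ell}^{\pm}$ is an intersection of at most $2d+1$ half-spaces: $\partial H_\ell(\pm m)$, $\overline H_j(-m+1)\cap\overline H_j^-(m-1)$ for $j<\ell$, and $\overline H_j(-m)\cap\overline H_j^-(m)$ for $j>\ell$.

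The current partial piece is the preimage under $\Phi_m^\ell$ of an initial $\varphi_{d-1}$-segment intersected with the box. Each half-space of $\mathbb Z^{d-1}$ in the coordinates $x_j$, $j\ne\ell$, lifts to the same-named half-space of $\mathbb Z^d$, and we add $\partial H_\ell(\pm m)$. So this piece is of type $(k_{d-1},\,l_{d-1}+2(d-1)+1)$.

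Taking the union and padding each intersection with redundant copies of a half-space to equalize lengths, we get type $(k_d,l_d)$ with
$$k_d=1+2d+k_{d-1},\qquad l_d=\max\{2d+1,\,l_{d-1}+2d-1\}.$$
These depend only on $d$. The base case $d=1$ is the enumeration $0,1,-1,2,-2,\dots$, whose segments are $\overline H_1(-a)\cap\overline H_1^-(b)$, of type $(1,2)$.

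The main obstacle is the step where a sub-enumeration of a box is treated as an initial segment. Listing the box in $\varphi_{d-1}$-order produces sets of the form $\varphi_{d-1}^{-1}([1,N'])\cap B$, not initial segments of $\varphi_{d-1}$ itself. This is exactly why the hypothesis is strengthened to include box intersections, and it closes the induction without growth depending on $m$ or $N$.

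We also check that $\varphi_d$ is a bijection onto $\mathbb N$. This holds because every shell is finite and the shells exhaust $\mathbb Z^d$.

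Finally, $\partial H_\ell(z)$ is itself one of the half-spaces listed in \eqref{Hk}, so using it counts as a single half-space. The verification that the lifted half-spaces coincide with those in \eqref{Hk} is routine.
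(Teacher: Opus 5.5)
Your proof is correct. It follows the paper's overall strategy: induction on $d$, enumerating $\mathbb Z^d$ shell by shell and face by face with $\varphi_{d-1}$ transported via $\Phi_m^\ell$, and writing an initial segment as $K_{m-1}$, the completed faces, and a partial face. The difference is in how you decompose the shell, and that difference matters.

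The paper uses the open faces $\Gamma_m^k$, on which $|x_j|<m$ for every $j\neq\ell$. Their projections are the cube $\{|y_j|\le m-1\}\subset\mathbb Z^{d-1}$. For a cube-by-cube enumeration this cube is itself an initial segment of $\varphi_{d-1}$, so the partial face is literally $(\Phi_m^\ell)^{-1}(\varphi_{d-1}^{-1}([1,M]))$ and no box intersection is needed. However, these faces do not cover the shell. Every point with two coordinates of modulus $m$ lies in no $\Gamma_m^k$; already for $d=2$ the corners $(\pm m,\pm m)$ are missed. Consequently the paper's relation $\le_d$ is not a total order, and its formula for $\varphi_d^{-1}([1,N])$ omits these points.

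Your sets $S_{m,\ell}^{\pm}$ assign each boundary point to the first coordinate of modulus $m$, so they form a genuine partition of $K_m\setminus K_{m-1}$. The price is that their projections are non-cubic boxes, so the partial face is $\varphi_{d-1}^{-1}([1,N'])\cap B$ rather than an initial segment. Your distributivity remark handles exactly this, at the cost of $2(d-1)$ extra half-spaces per block. That remark is a direct consequence of the inductive hypothesis, not a strengthening of it. A side benefit is that you use nothing about $\varphi_{d-1}$ beyond the type of its initial segments.

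Your bookkeeping is also sounder than the paper's. You count each completed face as its own union block and include $\partial H_\ell(\pm m)$ in the partial piece. The paper instead treats $\bigcup_{j<k}\Gamma_m^j$ as a single intersection, so its recursion $k_d=k_{d-1}+2$ undercounts. This is harmless for the statement, which only asserts that $k_d,l_d$ depend on $d$, but your recursion is the one that is actually justified.
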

\begin{proof}
We define a total order $\leq_d$ on $\mathbb{Z}^d$ and an associated enumeration $\varphi_d$ by induction on $d.$ For $d=1$ we set
$$
x\leq_1 y 
\quad\Longleftrightarrow\quad
|x|<|y|\ \text{ or }\ (y=-x,\ y>0), \quad x,y \in \mathbb{Z},
$$
and
$$
\varphi_1(x)=\#\{y\in\mathbb{Z}:\; y\leq_1 x\},
$$
where $\#$ denotes the cardinality of a set. Since each initial segment is an interval in $\mathbb{Z}$ defined by two half-spaces, we see that this enumeration is  of type $(1,2)$. 

Assume now that $\leq_{d-1}$ and $\varphi_{d-1}$ are already defined on $\mathbb{Z}^{d-1}$. For $x,y\in\mathbb{Z}^d$ we declare $x\leq_d y$ if one of the following holds:
\begin{enumerate}
\item there exists $m\in\mathbb{N}$ such that $x\in K_m$ and $y\notin K_m$;
\item there exist $m\in\mathbb{N}$ and $k<j$ such that $x\in\Gamma_m^{k}$ and $y\in\Gamma_m^{j}$;
\item there exist $m\in\mathbb{N}$ and $k$ such that $x,y\in\Gamma_m^k$ and 
  $\Phi_m^{\ell}(x)\leq_{d-1}\Phi_m^{\ell}(y)$.
\end{enumerate}
The corresponding enumeration is defined by
$$
\varphi_d(x)=\#\{y\in\mathbb{Z}^d:\; y\leq_d x\},
\quad x\in\mathbb{Z}^d.
$$
The enumeration $\varphi_d$ orders the lattice points of $\mathbb Z^d$. 
For $N\in\mathbb N$, we consider the set of all points that appear among the first
$N$ terms of this enumeration, namely
$$
\varphi_d^{-1}([1,N])=\{x\in\mathbb Z^d:\ \varphi_d(x)\le N\}.
$$
This set can be described explicitly. Suppose that the $N$-th point in the
enumeration belongs to the face $\Gamma_m^{k}$, i.e.
$$
\varphi_d^{-1}(N)\in\Gamma_m^{k}.
$$
Then, by the definition of the order $\le_d$, we have
$$
\varphi_d^{-1}([1,N])
=
K_{m-1}
\;\cup\;
\bigcup_{j=1}^{k-1}\Gamma_m^{j}
\;\cup\;
(\Phi_m^{\ell})^{-1}\!\bigl(\varphi_{d-1}^{-1}([1,M])\bigr),
$$
where 
$$
M=\varphi_{d-1}\bigl(\Phi_m^{\ell}(\varphi_d^{-1}(N))\bigr)
$$
and $\ell\in\{1,\dots,d\}$ is determined by the condition $k\in\{2\ell-1,\,2\ell\}$.

From the above decomposition, the set $\varphi_d^{-1}([1,N])$ is the union of three
parts: $K_{m-1}$ of type $(1,2d)$, $\bigcup_{j=1}^{k-1}\Gamma_m^j$ of type $(1,2d-1)$,
and $(\Phi_m^\ell)^{-1}(\varphi_{d-1}^{-1}([1,M]))$ which is of type
$(k_{d-1},l_{d-1})$ by the inductive hypothesis.
Hence the number of union blocks increases by $2$, so $k_d=k_{d-1}+2$.
To keep a common intersection length, we add the new half-space constraints coming
from $K_{m-1}$ and the faces, namely $2d$ and $2d-1$, so
$l_d=l_{d-1}+2d+(2d-1)=l_{d-1}+4d-1$.  
\end{proof}

Now, we are ready to state the main result of this section, which provides an explicit
Schauder basis in $L_p(\mathbb T_\theta^d)$ for $1<p<\infty.$
\begin{theorem}\label{main-basis-thm}
Let $1<p<\infty$. Let $ \{ e^{\theta}_\gamma \}_{\gamma \in \mathbb{Z}^d} $ be a generalized trigonometric system in $  L_p(\mathbb{T}_\theta^d) $ defined by \eqref{gts}. If $\varphi$ is a $(k, l)$-enumeration as in Definition \ref{numeration}, then the sequence
$$
\Big\{ y_m = e^{\theta}_{\varphi^{-1}(m)} \Big\}_{m=1}^\infty
$$
forms a Schauder  basis in  $L_p(\mathbb{T}_\theta^d) .$
\end{theorem}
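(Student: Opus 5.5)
The plan is to verify the three conditions of Proposition~\ref{Schauder} for the sequence $y_m=e^\theta_{\varphi^{-1}(m)}$. Condition (1) is immediate: each $e^\theta_\gamma$ is unitary, so $\|e^\theta_\gamma\|_{L_p(\mathbb T^d_\theta)}=1$ by \eqref{orthogonal-relations}. For condition (2), the Lemma identifying $L_p(\mathbb T^d_\theta)_\gamma=[e^\theta_\gamma]$ shows that the linear span of $\{y_m\}$ is $\mathcal P_p$. Since $\varphi$ is a bijection onto $\mathbb N$, this span exhausts all of $\mathbb Z^d$, and Lemma~\ref{Pp-dense} then gives $[y_m]_{m\ge1}=L_p(\mathbb T^d_\theta)$.

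The substance is condition (3): uniform boundedness of the partial sum projections $S_n:=T_{\varphi^{-1}([1,n])}$. For a polynomial $x=\sum_{m\le n_2}a_m y_m$, we have $S_{n_1}x=\sum_{m\le n_1}a_m y_m$. It therefore suffices to prove
$$\sup_{n\in\mathbb N}\|T_{\varphi^{-1}([1,n])}\|_{L_p(\mathbb T^d_\theta)\to L_p(\mathbb T^d_\theta)}<\infty .$$
By the definition of a $(k,l)$-enumeration, $\varphi^{-1}([1,n])=\bigcup_{i=1}^k\bigcap_{j=1}^l A_{ij}$ with half-spaces $A_{ij}$ from \eqref{Hk}. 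The key structural observation is that all projections $T_A$ along half-spaces commute, and that $T_{A\cap B}=T_AT_B$. Both facts are checked on $\mathcal P_p$, where everything acts diagonally on $x=\sum_{\gamma\in F}x_\gamma$, and then extended by the density in Lemma~\ref{Pp-dense} together with the boundedness in Lemma~\ref{TAH}. Consequently $T_{\bigcap_j A_{ij}}=\prod_{j=1}^l T_{A_{ij}}$ has norm at most $C_p^{\,l}$.

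For the union, I will apply inclusion--exclusion, iterating the identity \eqref{TJK}. The argument there only uses diagonal action on $\mathcal P_p$ and boundedness, so it holds for arbitrary sets whose projections are bounded. This gives
$$T_{\bigcup_{i=1}^kB_i}=\sum_{\emptyset\ne I\subset\{1,\dots,k\}}(-1)^{|I|+1}T_{\bigcap_{i\in I}B_i}, \qquad B_i=\bigcap_jA_{ij}.$$
Each $\bigcap_{i\in I}B_i$ is an intersection of at most $kl$ half-spaces, so its projection has norm at most $C_p^{\,kl}$ (with $C_p\ge1$). Summing the $2^k-1$ terms yields
$$\|S_n\|\le (2^k-1)\,C_p^{\,kl},$$
which is independent of $n$. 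This is exactly condition (3) with $c=(2^k-1)C_p^{kl}$.

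The main obstacle is making sure the half-space constants are genuinely uniform in $n$. The half-spaces $A_{ij}$ depend on $n$ through the levels $z$, and Lemma~\ref{TAH} must give a $C_p$ independent of $z$ and $k$. I would secure this by tracing the proof: the semi-infinite interval projections have norm at most $K_p$ for every base point $\gamma_0$, because the shifts $A_{\gamma_0}$ are isometries (Lemma~\ref{A_gamma1}). The constant $K_p$ depends only on the UMD constant of $L_p(\mathbb T^d_\theta)$, via Theorem~\ref{HUMD}, and not on the particular lexicographic order. Combining the two orders via \eqref{TJK} then gives $C_p\le 3K_p^{2}$ uniformly. A minor secondary point is that \eqref{TJK} is stated for intervals, but its proof uses only diagonal action on $\mathcal P_p$, so it applies verbatim to the sets above.
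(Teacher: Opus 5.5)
Your proposal is correct and follows essentially the same route as the paper. Conditions (1)--(2) of Proposition~\ref{Schauder} are immediate, and condition (3) comes from writing $\varphi^{-1}([1,n])$ as a union of intersections of half-spaces and combining Lemma~\ref{TAH} with $T_{A\cap B}=T_AT_B$ and the union formula \eqref{TJK}. Your explicit inclusion--exclusion bound $(2^k-1)C_p^{kl}$ and your check that the half-space constant does not depend on the level $z$ (via the isometric shifts $A_{\gamma_0}$ and the order-independent $K_p$) make precise the uniformity in $n$ that the paper only asserts.
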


\begin{proof}
Let $\varphi$ be the enumeration as in Lemma \ref{phi}, which is of type $(k,l)$, and define
$$
y_m := e^{\theta}_{\varphi^{-1}(m)}, \quad m\in\mathbb N.
$$
We now verify conditions \textup{(i)}–\textup{(iii)} in Proposition \ref{Schauder} for the sequence
$\{y_m\}_{m=1}^\infty$.

Condition (i) follows from the fact that $e^{\theta}_{\gamma}\neq 0$ for every 
$\gamma\in \mathbb{Z}^d.$
Condition (ii) in Proposition \ref{Schauder} follows directly from \eqref{LP}. Now let us check the condition (iii) in Proposition \ref{Schauder}. Fix $m\in\mathbb N$. Since $\varphi$ is a $(k_d,l_d)$-numbering of $\mathbb Z^d$, the set
$\varphi^{-1}([1,m])$ is of type $(k_d,l_d)$ and hence admits a formula
$$
\varphi^{-1}([1,m])=\bigcup_{i=1}^{k_d}\ \bigcap_{j=1}^{l_d} A_{ij},
$$
where each $A_{ij}$ is a half-space of the form described in \eqref{Hk}. 
In particular, for fixed $d$ the integers $k_d$ and $l_d$ control the combinatorial
complexity of this formula and do not depend on $m$.

By Lemma~\ref{TAH}, the projection $T_A$ along any half-space $A$ is bounded on
$L_p(\mathbb T_\theta^d)$. Using the identity $T_{A\cap B}=T_A T_B$ and the formula for unions given in
\eqref{TJK}, together with the bounds from Lemma~\ref{TAH}, we conclude that
$T_{\varphi^{-1}([1,m])}$ is well defined and bounded.
Consequently,
\begin{equation}\label{T-proj-bdd}
\bigl\|T_{\varphi^{-1}([1,m])}\bigr\|_{L_p(\mathbb T^d_\theta)\to L_p(\mathbb T^d_\theta)}
\le C_{p,d},
\end{equation}
where $C_{p,d}>0$ depends only on $p$ and $d$  and is independent of $m$.

%By Lemma \ref{TAH}, the projection associated with every half-space is bounded. Hence, the projection $T_{\varphi^{-1}([1,m])}$ along the set $\varphi^{-1}([1,m])$ is bounded, and we have $$\bigl\| T_{\varphi^{-1}([1,m])} \bigr\|_{L_p(\mathbb{T}_\theta^d)\to L_p(\mathbb{T}_\theta^d)}    \leq C_p,$$ where the constant $C_p$ is the same as in Lemma~\ref{TAH},  i.e., it does not depend on the number $m$. 
Then, for all $m,n\in\mathbb N$ with $m\le n$ and every sequence $(a_j)_{j=1}^n\subset\mathbb C$, we have
\begin{align*}
T_{\varphi^{-1}([1,m])}\!\left(\sum\limits_{j=1}^n a_j e^\theta_{\varphi^{-1}(j)}\right)
&=\sum\limits_{j=1}^n a_j\, T_{\varphi^{-1}([1,m])}\!\bigl(e^\theta_{\varphi^{-1}(j)}\bigr).
\end{align*}
Since $e^\theta_{\varphi^{-1}(j)}\in L_p(\mathbb{T}^d_{\theta})_{\varphi^{-1}(j)}$, it follows from \eqref{TA} that
$$
T_{\varphi^{-1}([1,m])}\!\bigl(e^\theta_{\varphi^{-1}(j)}\bigr)=
\begin{cases}
e^\theta_{\varphi^{-1}(j)}, & \varphi^{-1}(j)\in \varphi^{-1}([1,m])\\
0, & \varphi^{-1}(j)\notin \varphi^{-1}([1,m])
\end{cases}=\begin{cases}
e^\theta_{\varphi^{-1}(j)}, & 1\leq j\leq m,\\
0, & j>m.
\end{cases}
$$
Therefore,
\begin{align*}
T_{\varphi^{-1}([1,m])} \left( \sum\limits_{j=1}^{n}a_j y_j \right)& =T_{\varphi^{-1}([1,m])}\!\left(\sum\limits_{j=1}^n a_j e^\theta_{\varphi^{-1}(j)}\right)\\
&=\sum\limits_{j=1}^m a_j e^\theta_{\varphi^{-1}(j)}
=\sum\limits_{j=1}^m a_j y_j .
\end{align*}
Consequently, by \eqref{T-proj-bdd} we obtain
\begin{align*}
\left\| \sum\limits_{j=1}^{m}a_j y_j \right\|_{L_p(\mathbb{T}_\theta^d)} &= \left\| T_{\varphi^{-1}([1,m])} \left( \sum\limits_{j=1}^{n}a_j y_j \right) \right\|_{L_p(\mathbb{T}_\theta^d)}\\
&\leq \| T_{\varphi^{-1}[1,m]} \|_{L_p(\mathbb{T}_\theta^d)\to L_p(\mathbb{T}_\theta^d)} \cdot \left\| \sum\limits_{j=1}^{n}a_j y_j \right\|_{L_p(\mathbb{T}_\theta^d)}\\
&\leq C_{p,d} \left\| \sum\limits_{j=1}^{n}a_j y_j \right\|_{L_p(\mathbb{T}_\theta^d)}.
\end{align*}
Thus, condition (iii) in Proposition \ref{Schauder} is also satisfied. Therefore, the sequence 
$$
\left\{ y_m = e^{\theta}_{\varphi^{-1}(m)} \right\}_{m=1}^{\infty}
$$
is a Schauder basis in $L_p(\mathbb{T}_\theta^d)$.
This concludes the proof.

\end{proof}
We illustrate our main result with the following example.

\begin{example}Let $d=2.$
For $\gamma=(m,n)\in\mathbb Z^2,$ we write the generalized trigonometric
system as
$$
e^\theta_{(m,n)}:=U_1^{\,m}U_2^{\,n}.
$$
For $r=0,1,2,\dots$ we consider the square 
$$
K_r=\{(m,n)\in\mathbb Z^2:\ |m|\le r,\ |n|\le r\}.
$$
We enumerate $\mathbb Z^2$ by increasing squares: first list the points of $K_0$, then
the new points in $K_1\setminus K_0$, then those in $K_2\setminus K_1$, and so on.
Inside each boundary $K_r\setminus K_{r-1}$ we go around the square in some fixed order
(e.g.\ left side $\to$ right side $\to$ bottom $\to$ top) and along each side we list the
integer points consecutively. This produces a bijection (enumeration)
$$
\varphi:\mathbb Z^2\to\mathbb N.
$$
For example, one may start with
$$
(0,0),\ (-1,-1),\ (-1,0),\ (-1,1),\ (0,1),\ (1,1),\ (1,0),\ (1,-1),\ (0,-1),\dots
$$
Using this enumeration we reorder the generalized trigonometric system into a sequence
$$
y_k:=e^\theta_{\varphi^{-1}(k)},\quad k\ge1.
$$
A similar enumeration of matrix units was already employed by Kwapień and Pełczyński in their study of the main triangle projection in matrix spaces \cite{KP1970}.
Let $1<p<\infty$ and let $x\in\mathcal P_p.$ Then $x$ can be written in the form
$$
x=\sum\limits_{k=1}^N a_k\,y_k,
$$
for some $N\in\mathbb N$ and scalars $a_1,\dots,a_N\in\mathbb C$.
For each $m\le N$ consider the set of the first $m$ indices
$$
A_m:=\varphi^{-1}([1,m])\subset\mathbb Z^2
$$
and the corresponding projection $T_{A_m}$. By definition of $T_A$
(see \eqref{TA}), we have
$$
T_{A_m}(y_k)=
\begin{cases}
y_k, & 1\le k\le m,\\
0,   & k>m,
\end{cases}
$$
and therefore,
$$
T_{A_m}x
=T_{A_m}\Big(\sum\limits_{k=1}^N a_k\,y_k\Big)
=\sum\limits_{k=1}^m a_k\,y_k.
$$
The key point is that each set $A_m=\varphi^{-1}([1,m])$ has a simple shape:
it consists of a whole square $K_{r-1}$ together with a part of the boundary of the next
square $K_r$. Such sets can be constructed from finitely many coordinate half-spaces
(conditions of the form $m\ge -r$, $m\le r$, $n\ge -r$, $n\le r$) by finitely many unions
and intersections. Hence, $A_m$ is of type $(k,l)$ in the sense of Definition~\ref{numeration}.
Consequently, it follows from Lemma~\ref{TAH} that the projections $T_{A_m}$ are uniformly bounded on
$L_p(\mathbb T^2_\theta)$ and we have
$$
\|T_{A_m}\|_{L_p(\mathbb T^2_\theta)\to L_p(\mathbb T^2_\theta)}\le C_p,
\quad m\ge1,
$$
with $C_p>0$ independent of $m$. This uniform boundedness is exactly the condition needed in
Proposition~\ref{Schauder}, and hence the sequence $\{y_k\}_{k\ge1}$ is a Schauder basis in
$L_p(\mathbb T^2_\theta)$.

Finally, when $\theta=0$ the unitaries commute, $\mathbb T^2_\theta$ becomes the classical
 $L_{\infty}(\mathbb T^2)$ and $e^0_{(m,n)}(z_1,z_2)=e^{2\pi imt}e^{2\pi ins}$, so the above example reduces
to the usual ordering of the classical trigonometric system in $L_p(\mathbb T^2)$.
\end{example}

\section{RUC-Bases in noncommutative $L_p(\mathbb{T}^d_\theta)$ spaces}

In this section, we investigate RUC-bases in noncommutative $L_p(\mathbb{T}^d_\theta)$ spaces. We show that the trigonometric system $\{e^{\theta}_{\gamma}\}_{\gamma\in\mathbb Z^d}$ forms an RUC-bases in $L_p(\mathbb{T}^d_\theta)$ for $2<p<\infty$.

\begin{definition}[RUC-basis]\label{def:RUC} Let $X$ be a Banach space and let $\{x_n\}_{n\ge1}$ be a Schauder basis of $X.$ The sequence $\{x_n\}_{n=1}^{\infty}$ is called an \emph{RUC}-basis if for every
$x = \sum\limits_{n=1}^{\infty} \alpha_n x_n \in X,$
the series
$$
\sum\limits_{n=1}^{\infty} r_n(t)\,\alpha_n x_n
$$
converges for almost all $t \in [0,1]$.
\end{definition}

\begin{definition}\cite{LT}\label{def:type}
A Banach space $X$ is said to be of (Rademacher) type $2$ if there exists a constant
$M(X) < \infty$ such that, for every finite family $\{x_k\}_{k=1}^n \subset X$, one has
$$
\left( \int\limits_{0}^{1} \left\| \sum\limits_{k=1}^n r_k(t)\, x_k \right\|_X^{2} \, dt \right)^{1/2}
\le
M(X)\left( \sum\limits_{k=1}^n \|x_k\|_X^{2} \right)^{1/2}.
$$
where $\{r_k\}_{k=1}^{\infty}$ denotes the usual Rademacher sequence given by
$$
r_k(t) = \operatorname{sign}(\sin(2^{k}\pi t)), \quad 0 \le t \le 1 \quad (k = 1,2,\ldots).
$$
\end{definition}
\begin{proposition}\cite[Proposition 1.1]{BKPS}\label{RUC-criteria} A Schauder basis $\{x_n\}_{n\ge1}$ is \emph{RUC}-basis in $X$ if and only if for all $1\leq p<\infty$ there exists constant $K(p,X)>0$ dependent on $p$ and $X$ only such that for any $m\leq n,$ $m,n\in\mathbb N,$ and $\{a_k\}_{k=1}^n\subset \mathbb C,$ we have
\begin{equation}\label{RUC-equ}
\left\| \sum\limits_{k=1}^m r_k(t)a_k x_k \right\|_{L_p\big([0,1],X\big)}
\leq
K(p,X)\left\|\sum\limits_{k=1}^n a_k x_k \right\|_X.
\end{equation}
\end{proposition}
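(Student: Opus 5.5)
The statement is the equivalence criterion of \cite[Proposition~1.1]{BKPS}. I would prove the two implications separately, working throughout with the partial-sum maps
$$
S_m(x)(t):=\sum\limits_{k=1}^m r_k(t)\,a_k(x)\,x_k,\quad t\in[0,1],\quad x=\sum\limits_{k=1}^\infty a_k(x)x_k\in X .
$$
Here the $a_k(\cdot)$ are the (bounded) coordinate functionals of the Schauder basis. For each fixed $t$, the sequence $\{S_m(x)(t)\}_{m\ge1}$ is a sequence of partial sums of independent, symmetric, $X$-valued random variables.

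\emph{Sufficiency.} Assume \eqref{RUC-equ} holds for some $p$ and fix $x\in X$. For $m<n$, apply \eqref{RUC-equ} to the coefficient sequence with $a_1=\dots=a_m=0$. Since the Rademacher functions are $\pm1$-valued, this gives, for every $N\ge n$,
$$
\Bigl\|\sum\limits_{k=m+1}^{n} r_k a_k x_k\Bigr\|_{L_p([0,1],X)}\le K(p,X)\Bigl\|\sum\limits_{k=m+1}^{N}a_kx_k\Bigr\|_X .
$$
Letting $N\to\infty$, the right-hand side becomes $K(p,X)\|x-\sum_{k\le m}a_kx_k\|_X$, which tends to $0$ as $m\to\infty$ because $\{x_k\}$ is a basis. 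Hence $\{S_m(x)\}$ is Cauchy in $L_p([0,1],X)$, and therefore converges in probability. I would then invoke the Lévy/Itô--Nisio theorem: for sums of independent symmetric $X$-valued random variables, convergence in probability implies almost sure convergence. This is exactly the RUC property.

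\emph{Necessity.} Assume the basis is RUC. Each $S_m:X\to L_0([0,1],X)$ is a continuous linear operator of finite rank. By hypothesis, $\{S_m(x)\}$ converges a.e. for every $x$, so $\sup_m\|S_m(x)(\cdot)\|_X<\infty$ a.e. for every $x\in X$.

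First I would apply a Banach--Steinhaus (Baire category) argument in the F-space $L_0$, to the sublinear map $x\mapsto\sup_m\|S_m(x)(\cdot)\|_X$. The aim is a uniform estimate in probability: there is $C$ such that
$$
\mathbb P\Bigl(\sup_m\|S_m(x)\|_X> C\|x\|_X\Bigr)\le \tfrac18,\quad x\in X.
$$
Second, I would upgrade this to moments. The Hoffmann-Jørgensen inequality, or equivalently Kahane's inequality (Kahane--Khintchine for vector Rademacher sums), turns the bound in probability into
$$
\Bigl\|\sup_m\|S_m(x)\|_X\Bigr\|_{L_p[0,1]}\le K(p,X)\|x\|_X
$$
for every $1\le p<\infty$. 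Applying this to $x=\sum_{k\le n}a_kx_k$ yields \eqref{RUC-equ} for all $m\le n$. The same Kahane argument shows that validity of \eqref{RUC-equ} for one $p$ is equivalent to its validity for all $p$, which reconciles the ``for all $p$'' in the statement with the single-$p$ argument used for sufficiency.

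The main obstacle I expect is the uniform-boundedness step in $L_0$. Since $L_0$ is not locally convex, I would argue directly. Consider the closed sets
$$
F_j=\Bigl\{x:\ \mathbb P\bigl(\sup_m\|S_m(x)\|>j\bigr)\le\tfrac1{16}\Bigr\}.
$$
They are closed because each $S_m$ is continuous into $L_0$ and the supremum is a monotone limit over finitely many indices. They cover $X$ by the a.e. finiteness of the supremum. By Baire, some $F_j$ contains a ball, and symmetry plus the triangle inequality then give the uniform bound above.
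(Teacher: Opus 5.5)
The paper gives no proof of this proposition. It quotes it from \cite{BKPS} and uses it as a black box, so there is no argument in the paper to compare yours against. Your proof is correct and follows the standard route.

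\textbf{Sufficiency.} You show that the Rademacher partial sums $S_m(x)$ are Cauchy in $L_p([0,1],X)$ and then pass from convergence in probability to a.e.\ convergence via L\'evy/It\^o--Nisio. Equivalently, $(S_m(x))_m$ is an $X$-valued dyadic martingale that converges in $L_p$, so martingale convergence gives a.e.\ convergence directly.

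\textbf{Necessity.} You use a Baire-category uniform boundedness argument in $L_0$, followed by the Kahane/Hoffmann-J\o rgensen passage from a bound in probability to bounds on all moments.

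Two details deserve a line each when you write this out.
\begin{enumerate}
\item Closedness of $F_j$ amounts to the closedness of $\{f\in L_0:\ \mathbb P(|f|>j)\le\frac1{16}\}$ in $L_0$. To see it, pass to an a.e.\ convergent subsequence and apply Fatou; your ``monotone limit'' phrase only handles the reduction to finitely many indices.
\item Hoffmann-J\o rgensen for the maximal function needs a deterministic bound on the individual summands. Here this is automatic: $\|a_k x_k\|_X\le 2\,\mathrm{bc}\,\|x\|_X$, where $\mathrm{bc}$ is the basis constant. Alternatively, apply Kahane's inequality to each fixed sum $S_m(x)$, which is all the statement actually requires.
\end{enumerate}

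What your version adds over the bare citation is that the criterion holds for one $p$ if and only if it holds for all $p$. Your sufficiency argument runs from a single exponent, and Kahane's inequality gives the equivalence. This is exactly what the paper silently relies on in the proof of Theorem~\ref{RUC-basis-thm}. There only the $L_2([0,1],L_p(\mathbb T^d_\theta))$ estimate is verified, although the proposition as quoted asks for every $1\le p<\infty$.
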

The following is the main result of this section which shows the existence of an RUC-basis in $L_p(\mathbb{T}_\theta^d)$ for $2<p<\infty.$
\begin{theorem}\label{RUC-basis-thm}
Let $\varphi$ be a $(k, l)$-enumeration as in Definition \ref{numeration}. Then the generalized trigonometric system 
$$
\Big\{ y_m = e^{\theta}_{\varphi^{-1}(m)} \Big\}_{m=1}^\infty
$$
forms a \emph{RUC}-basis in  $L_p(\mathbb{T}_\theta^d)$ for $2<p<\infty.$
\end{theorem}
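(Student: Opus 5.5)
By Theorem~\ref{main-basis-thm}, $\{y_m\}$ is already a Schauder basis of $L_p(\mathbb T^d_\theta)$. So it remains to verify the criterion of Proposition~\ref{RUC-criteria}, which we use with exponent $2$. The plan is the classical Orlicz-type argument for uniformly bounded orthonormal bases in spaces of type $2$, transferred to the noncommutative torus.

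\emph{Step 1: type.} For $2\le p<\infty$ the noncommutative space $L_p(\mathbb T^d_\theta)$ has Rademacher type $2$ in the sense of Definition~\ref{def:type}. This follows from the noncommutative Khintchine inequalities, or from $2$-uniform smoothness of $L_p(\mathcal M)$ for $p\ge2$. Hence, for $x=\sum_{k=1}^n a_k y_k$ and $m\le n$,
$$
\Big(\int_0^1\Big\|\sum_{k=1}^m r_k(t)a_k y_k\Big\|_{L_p(\mathbb T^d_\theta)}^2dt\Big)^{1/2}\le M_p\Big(\sum_{k=1}^m|a_k|^2\|y_k\|^2_{L_p(\mathbb T^d_\theta)}\Big)^{1/2}.
$$

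\emph{Step 2: uniform boundedness and orthonormality.} By \eqref{orthogonal-relations} we have $\|y_k\|_{L_p}\le\|y_k\|_{L_\infty}=1$, since $\tau_\theta$ is a state. By orthonormality of $\{e^\theta_\gamma\}$ in $L_2(\mathbb T^d_\theta)$,
$$
\Big(\sum_{k=1}^m|a_k|^2\Big)^{1/2}=\Big\|\sum_{k=1}^m a_k y_k\Big\|_{L_2(\mathbb T^d_\theta)}\le\Big\|\sum_{k=1}^m a_ky_k\Big\|_{L_p(\mathbb T^d_\theta)}.
$$
Here we use the inclusion $L_p\subset L_2$ with norm $\le1$ for the finite trace.

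\emph{Step 3: the partial sum bound.} By \eqref{T-proj-bdd}, $\|\sum_{k=1}^m a_ky_k\|_{L_p}\le C_{p,d}\|\sum_{k=1}^n a_ky_k\|_{L_p}$. Chaining Steps 1--3 gives \eqref{RUC-equ} with exponent $2$ and constant $M_pC_{p,d}$.

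Proposition~\ref{RUC-criteria} is phrased "for all $1\le p<\infty$". To pass to every exponent we invoke the Kahane--Khintchine inequality, by which all $L_q([0,1],X)$ norms of Rademacher sums are equivalent. Alternatively, one notes that the criterion in \cite{BKPS} only needs one exponent. Either way Proposition~\ref{RUC-criteria} yields the RUC property.

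The main obstacle is Step 1: making sure the type-$2$ property of noncommutative $L_p$ for $p\ge2$ is correctly cited. The first option is \cite{PXu} (uniform convexity/smoothness of noncommutative $L_p$), or the noncommutative Khintchine inequality of Lust-Piquard--Pisier. That inequality gives
$$
\Big\|\sum r_kx_k\Big\|_{L_2(L_p)}\lesssim\max\Big\{\Big\|\big(\sum x_k^*x_k\big)^{1/2}\Big\|_p,\Big\|\big(\sum x_kx_k^*\big)^{1/2}\Big\|_p\Big\}.
$$
The right side is then bounded by $(\sum\|x_k\|_p^2)^{1/2}$ via the triangle inequality in $L_{p/2}$, since $p/2\ge1$. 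The remaining steps are routine.
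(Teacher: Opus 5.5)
Your proof is correct and follows the paper's route. It gets the Schauder basis from Theorem~\ref{main-basis-thm}, applies the criterion of Proposition~\ref{RUC-criteria} with exponent $2$, uses type $2$ of $L_p(\mathbb T^d_\theta)$, and uses the uniform bound \eqref{T-proj-bdd} on the projections $T_{\varphi^{-1}([1,m])}$. The only difference is that you derive the estimate $\bigl\|\sum_{k=1}^{m} r_k a_k y_k\bigr\|_{L_2([0,1],L_p(\mathbb T^d_\theta))}\lesssim\bigl\|\sum_{k=1}^{m}a_ky_k\bigr\|_{L_p(\mathbb T^d_\theta)}$ explicitly, from type $2$, $\|y_k\|_{L_p}=1$ and orthonormality in $L_2(\mathbb T^d_\theta)$, where the paper simply imports it from \cite[Theorem~1.6]{DS1997}.
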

    \begin{proof}Let $2<p<\infty$ and $\varphi$ be the $(k,l)$- enumeration as in Lemma~\ref{phi} (see also
Definition~\ref{numeration}) and set $y_m=e^\theta_{\varphi^{-1}(m)}$.
By Theorem~\ref{main-basis-thm}, $\{y_m\}_{m\ge1}$ is a Schauder basis in
$L_p(\mathbb T_\theta^d).$
Thus, by Proposition \ref{RUC-criteria}, it is sufficient to prove that for any $m\leq n,$ $m,n\in\mathbb N,$ and $\{a_k\}_{k=1}^n\subset \mathbb C,$ we have
\begin{equation}\label{RUC-equ}
\left\| \sum\limits_{k=1}^m r_k(t)a_k y_k \right\|_{L_2\big([0,1],L_p(\mathbb T^d_{\theta})\big)}
\leq
c_p\left\|\sum\limits_{k=1}^n a_k y_k \right\|_{L_p(\mathbb T^d_{\theta})}.
\end{equation}
It is well known (see \cite[Corollary 5.5]{PXu}, \cite[Theorem 7.14.17]{DdPS}) that $L_p(\mathbb T_\theta^d)$ is a Banach space of type $2$ for $2<p<\infty$. Consequently, by \cite[Theorem 1.6]{DS1997} there exist $K_p>0$ such that
\begin{equation}\label{Kahane}
\left\| \sum\limits_{k=1}^m r_k(t)a_k y_k \right\|_{L_2\big([0,1],L_p(\mathbb T^d_{\theta})\big)}
\leq
K_p\left\|\sum\limits_{k=1}^m a_k y_k \right\|_{L_p(\mathbb T^d_{\theta})}.
\end{equation}
Now, let us estimate the RHS. Indeed, as in the proof of Theorem~\ref{main-basis-thm}, we have  
that the projection $T_{\varphi^{-1}([1,m])}$ along the set $\varphi^{-1}([1,m])$ is well defined and bounded on $L_p(\mathbb T^d_{\theta})$ by Lemma~\ref{TAH} with 
$$\|T_{\varphi^{-1}([1,m])}\|_{L_p(\mathbb T^d_{\theta})\to L_p(\mathbb T^d_{\theta})}\leq C_{p,d},\,\ 1<p<\infty.$$ 
Therefore, we have
\begin{align*}
T_{\varphi^{-1}([1,m])} \left( \sum\limits_{j=1}^{n}a_j y_j \right) =T_{\varphi^{-1}([1,m])}\!\left(\sum\limits_{j=1}^n a_j e^\theta_{\varphi^{-1}(j)}\right)
&=\sum\limits_{j=1}^m a_j e^\theta_{\varphi^{-1}(j)}
=\sum\limits_{j=1}^m a_j y_j .
\end{align*}
Consequently, by Lemma~\ref{TAH} we obtain
\begin{align*}
\left\| \sum\limits_{j=1}^{m}a_j y_j \right\|_{L_p(\mathbb{T}_\theta^d)}& = \left\| T_{\varphi^{-1}([1,m])} \left( \sum\limits_{j=1}^{n}a_j y_j \right) \right\|_{L_p(\mathbb{T}_\theta^d)}\\
&\leq \| T_{\varphi^{-1}[1,m]} \|_{L_p(\mathbb{T}_\theta^d)\to L_p(\mathbb{T}_\theta^d)} \cdot \left\| \sum\limits_{j=1}^{n}a_j y_j \right\|_{L_p(\mathbb{T}_\theta^d)} \\
&\leq C_{p,d} \left\| \sum\limits_{j=1}^{n}a_j y_j \right\|_{L_p(\mathbb{T}_\theta^d)}.
\end{align*}
\end{proof}

\begin{rem}
The estimate \eqref{RUC-equ} may also be obtained by applying the contraction principle for Rademacher sums \cite[p. 156]{P}. Moreover, the estimate \eqref{Kahane} can be derived directly from the noncommutative Khintchine inequality, rather than from \cite[Theorem~1.6]{DS1997}. We have chosen the above argument because it uses the uniformly bounded the projections $T_{\varphi^{-1}([1,m])}$, which are naturally associated with the Schauder basis constructed in Theorem~\ref{main-basis-thm}.
\end{rem}

\section{Partial sum operators for trigonometric systems and its weak $(1,1)$ type estimate}

In this section, we study a weak $(1,1)$ type estimate of partial sum operators for trigonometric system.

\begin{lemma}\label{T0}
Let $T_0$ be the projection along $\{0\}\subset\mathbb Z^d$ defined by \eqref{TA}. Then 
$$\tau_{\theta}\bigl(T_0(x)\bigr) = \tau_{\theta}(x), \quad x \in L_1(\mathbb T^d_\theta).$$
\end{lemma}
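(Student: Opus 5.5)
We will identify $T_0$ with the operator $E_0$ of \eqref{E} on $L_1(\mathbb T^d_\theta)$. By Theorem~\ref{3.1} (which holds for $p=1$), $E_0$ is a bounded projection of $L_1(\mathbb T^d_\theta)$ onto the eigenspace $L_1(\mathbb T^d_\theta)_0$. By the lemma identifying eigenspaces, this eigenspace equals $[e^\theta_0]=\mathbb C\cdot 1$. Moreover, $E_0$ acts as the identity on $L_1(\mathbb T^d_\theta)_0$ and annihilates each $L_1(\mathbb T^d_\theta)_\gamma$ with $\gamma\neq0$, since $E_0 e^\theta_\gamma=\int_{\mathbb T^d}t^{-\gamma}\,dt\; e^\theta_\gamma=0$ by \eqref{Eg-poly}. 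Hence $E_0$ satisfies the defining property \eqref{TA} of the projection along $\{0\}$, and we take $T_0:=E_0$. Uniqueness follows from the density of the span of the eigenspaces \eqref{LP}.

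The proof then reduces to the identity $\tau_\theta(E_0x)=\tau_\theta(x)$, which we prove in two steps.

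\textbf{Polynomials.} First take $x=\sum_{\eta\in F}c_\eta e^\theta_\eta\in\mathcal P_\theta$. By \eqref{Eg-poly}, $E_0x=c_0\,e^\theta_0=c_0\cdot 1$, so $\tau_\theta(E_0x)=c_0$. By the definition of $\tau_\theta$, also $\tau_\theta(x)=c_0$.

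\textbf{Extension by density.} Next we pass to general $x\in L_1(\mathbb T^d_\theta)$ using two facts. The trace $\tau_\theta$ is a bounded linear functional on $L_1(\mathbb T^d_\theta)$, since $|\tau_\theta(y)|\le\|y\|_{L_1}$. The operator $E_0$ is a contraction on $L_1$ by Theorem~\ref{3.1}. Since $\mathcal P_\theta=\mathcal P_1$ is dense in $L_1(\mathbb T^d_\theta)$ (Lemma~\ref{Pp-dense} with $p=1$), both sides of the identity are continuous in $x$ and agree on a dense set, so they agree everywhere.

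An alternative, more direct route avoids the density argument. Because $\tau_\theta$ is bounded on $L_1$, it commutes with the Bochner integral, so
$$\tau_\theta(E_0x)=\int_{\mathbb T^d}\tau_\theta(\pi_{t^{-1}}(x))\,dt=\tau_\theta(x),$$
using that each $\pi_t$ is trace preserving. For this we would note that $\pi_t$ is trace preserving on $L_1$, not just on the algebra; this also follows from density together with the isometry property \eqref{PI}.

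The only real obstacle is the identification $T_0=E_0$ on $L_1$. This is needed because \eqref{TA} was set up for bounded projections, while Proposition~\ref{lem1} and the lemmas built on it only treat $1<p<\infty$. The explicit formula for $E_0$ settles this even at $p=1$.
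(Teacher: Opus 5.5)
Your proposal is correct, and half of it is the paper's argument. Since $E_0x=\int_{\mathbb T^d}\pi_{t^{-1}}(x)\,dt=\int_{\mathbb T^d}\pi_t(x)\,dt$ by inversion invariance of Haar measure, your identification $T_0=E_0$ is exactly the paper's formula \eqref{eq:T0-average}. You obtain it more carefully, though. The paper extends that identity from $\mathcal P_1$ ``by continuity'', which tacitly presupposes a bounded $T_0$ on $L_1$, although Proposition~\ref{lem1} and its consequences only cover $1<p<\infty$. You instead define $T_0:=E_0$ and take boundedness from Theorem~\ref{3.1}.

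The trace identity is where you genuinely diverge. The paper moves $\tau_\theta$ inside the Bochner integral and uses $\tau_\theta\circ\pi_t=\tau_\theta$; this is your ``alternative route''. Note that the paper cites \eqref{PI} here, which is the isometry property, whereas what is actually needed is trace preservation on all of $L_1$; as you say, this follows by density. Your main route checks $\tau_\theta(E_0x)=c_0=\tau_\theta(x)$ on polynomials using only $\tau_\theta(e^\theta_\gamma)=\delta_{\gamma,0}$. It then extends by the $L_1$-continuity of $\tau_\theta$ and the contractivity of $E_0$.

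Your version uses less: no interchange of the trace with a vector-valued integral and no invariance of the trace under the action. The paper's version is more structural. It exhibits $T_0$ as the average of the action, so the trace identity follows immediately from invariance, without knowing $\tau_\theta$ explicitly on the basis.
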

\begin{proof}
 If $x_\gamma\in L_1(\mathbb T_\theta^d)_\gamma$ for $\gamma\in \mathbb{Z}^d,$ then
$\pi_t(x_\gamma)=t^\gamma x_\gamma$ for all $t\in\mathbb T^d$, so
$$
\int\limits_{\mathbb T^d} \pi_t(x_\gamma)\,dt
 = \left(\int\limits_{\mathbb T^d} t^\gamma\,dt\right)x_\gamma
 =
\begin{cases}
x_\gamma, & \gamma=0,\\
0,        & \gamma\ne 0.
\end{cases}
$$
For $x \in \mathcal{P}_1$, we obtain
$$
\sum\limits_{\gamma\in\mathbb Z^d} \int\limits_{\mathbb T^d} \pi_t(x_\gamma)\,dt = x_0 = T_0(x).
$$
Since $\mathcal{P}_1$ is dense in $L_1(\mathbb T^d_{\theta})$ by Lemma~\ref{Pp-dense}, this identity extends by continuity to 
$ L_1(\mathbb T^d_{\theta})$. Hence,
\begin{equation}\label{eq:T0-average}
T_0(x) = \int\limits_{\mathbb T^d} \pi_t(x)\,dt,
\quad x\in L_1(\mathbb T_\theta^d).
\end{equation}
 We have
$$
\tau_\theta\bigl(T_0(x)\bigr)
 = \tau_\theta\!\left(\int\limits_{\mathbb T^d} \pi_t(x)\,dt\right)
 = \int\limits_{\mathbb T^d} \tau_\theta(\pi_t(x))\,dt
\overset{\text{(\ref{PI}})}{=} \int\limits_{\mathbb T^d} \tau_\theta(x)\,dt
 = \tau_\theta(x), \, \, x\in L_1(\mathbb{T}^d_{\theta}).
$$
This completes the proof.
\end{proof}

We now introduce the following subspace of $L_{\infty}(\mathbb T^{d}_{\theta})$ defined by 
\begin{equation}
\mathcal{A}_{\theta}
 :=
   L_{\infty}(\mathbb T^{d}_{\theta})_{\mathbb Z^{d}_{+}\setminus\{0\}}
 + L_{\infty}(\mathbb T^{d}_{\theta})_{\mathbb Z^{d}_{-}\setminus\{0\}}
 + L_{\infty}(\mathbb T^{d}_{\theta})_{\{0\}}.
\end{equation}
By \eqref{LP}, we have
$$
L_\infty(\mathbb T^d_\theta)
 = L_\infty(\mathbb T^d_\theta)_{\mathbb Z^d}
 = \overline{
      L_\infty(\mathbb T^d_\theta)_{\mathbb Z^d_+\setminus\{0\}}
    + L_\infty(\mathbb T^d_\theta)_{\mathbb Z^d_-\setminus\{0\}}
    + L_\infty(\mathbb T^d_\theta)_{\{0\}}
   }^{\|\cdot\|_{ L_\infty(\mathbb T^d_\theta)}}.
$$
In other words,
$$
\overline{\mathcal A_\theta}^{\|\cdot\|_{ L_\infty(\mathbb T^d_\theta)}}
 = L_\infty(\mathbb T^d_\theta).
$$

For any $x\in \mathcal{A}_{\theta},$ the conjugate of $x$ associated with $\mathbb{Z}^d_+$ is defined by
\begin{equation}
\widetilde{x}
 := i\,x_{-} - i\,x_{+},  
\end{equation}
where $x_{+} \in L_{\infty}(\mathbb T^{d}_{\theta})_{\mathbb Z^{d}_{+}\setminus\{0\}}$ and $x_{-} \in L_{\infty}(\mathbb T^{d}_{\theta})_{\mathbb{Z}^{d}_{-}\setminus\{0\}}.$ 
Observe that
$$
x + i\widetilde{x}
 = x_0 + 2x_{+}
 \in L_{\infty}(\mathbb T^{d}_{\theta})_{\mathbb Z^{d}_{+}},
$$
where $x_0\in L_{\infty}(\mathbb{T}^d_{\theta})_{\{0\}}.$

\begin{definition}\label{HT-def}
The Hilbert transform $\mathcal{H}_{\mathbb{Z}^d_{+}} : \mathcal{A}_{\theta} \to L_{\infty}(\mathbb T^{d}_{\theta})$ associated with $\mathbb{Z}^{d}_{+}$ defined by the formula
\begin{equation}\label{HA}
\mathcal{H}_{\mathbb{Z}^d_{+}}(x)= \widetilde{x}, \,\ x\in \mathcal{A}_{\theta}.
\end{equation}
\end{definition}
Let $G$ be a locally compact abelian group and let $\mathcal M$ be a semifinite von Neumann algebra equipped with a faithful semifinite normal trace $\tau$. In \cite{R2}, Randriantoanina studied Hilbert transforms associated with $G$-flows on $\mathcal M$ and closed semigroups $\Sigma \subset \widehat{G}$ satisfying $\Sigma \cup (-\Sigma)=\widehat{G}$, where $\widehat{G}$ denotes the dual group of $G$.  

The Hilbert transform $\mathcal{H}_{\mathbb{Z}^d_{+}}$ introduced in Definition~\ref{HT-def} coincides with the Hilbert transform defined in \cite[Definition~4.1]{R2} in the special case, when $G=\mathbb T^d$ and $\mathcal M=L_{\infty}(\mathbb T^{d}_{\theta})$. Consequently, Theorem~4.2 and Corollary~4.6 in \cite{R2} yield that $\mathcal{H}_{\mathbb{Z}^d_{+}}$, defined by \eqref{HA}, admits a unique linear bounded extension on $L_p(\mathbb T^d_{\theta})$ for $1<p<\infty$, and from $L_1(\mathbb T^d_{\theta})$ into $L_{1,\infty}(\mathbb T^d_{\theta}).$

Moreover, for $1<p<\infty$ and $x\in L_p(\mathbb T^d_\theta)$ one has
\begin{equation}\label{LpH}
\mathcal{H}_{\mathbb Z^d_+}(x)
 = i\,T_{\mathbb Z^d_{-}\setminus\{0\}}(x)
   - i\,T_{\mathbb Z^d_{+}\setminus\{0\}}(x),
 \quad x\in L_p(\mathbb T^d_\theta),
\end{equation}
where the projections $T_{\mathbb Z^d_{-}\setminus\{0\}}$ and $T_{\mathbb Z^d_{+}\setminus\{0\}}$
are well defined on $L_p(\mathbb T^d_\theta)$ (see Lemma~\ref{lem2}).

\begin{definition}\label{def:Pplus}
The projection associated with $\mathbb Z^d_+$ is the linear map
$P_+:\mathcal A_\theta\to L_\infty(\mathbb T^d_\theta)$ defined by
\begin{equation}\label{Pplus-A}
P_+x
:=\frac12\bigl(x+T_0(x)+ i\,\mathcal H_{\mathbb Z^d_+}(x)\bigr),
\quad x\in\mathcal A_\theta,
\end{equation}
where $T_0$ is the projection along $\{0\}\subset\mathbb Z^d$ defined by \eqref{TA}.
We also define the projection associated with $\mathbb Z^d_-$ by
\begin{equation}\label{def:Pminus}
P_- := I - P_+ + T_0 .
\end{equation}
\end{definition}

Since $\mathcal H_{\mathbb Z^d_+}:L_1(\mathbb T^d_\theta)\to L_{1,\infty}(\mathbb T^d_\theta)$
is bounded \cite[Corollary~4.6]{R2} and, by Lemma~\ref{T0}, the operator $P_+$ defined by
\eqref{Pplus-A} admits a bounded extension
$$
P_+:L_1(\mathbb T^d_\theta)\to L_{1,\infty}(\mathbb T^d_\theta).
$$
Consequently, there exists a constant $C>0$ such that
\begin{equation}\label{Pplus-weak11}
\|P_+x\|_{L_{1,\infty}(\mathbb T^d_\theta)}
\le C\,\|x\|_{L_1(\mathbb T^d_\theta)},
\quad x\in L_1(\mathbb T^d_\theta).
\end{equation}
Similarly, \eqref{def:Pminus} shows that $P_-$ also extends to a bounded map from
$L_1(\mathbb T^d_\theta)$ to $L_{1,\infty}(\mathbb T^d_\theta)$.

Fix a total order $\le_d$ on $\mathbb Z^d$. For $\gamma\in\mathbb Z^d$ define the
right and left closed semi-infinite intervals by
$$
[\gamma,\infty)=\{\eta\in\mathbb Z^d:\ \eta\ge_d \gamma\},
\quad
(-\infty,\gamma]=\{\eta\in\mathbb Z^d:\ \eta\leq_d \gamma\}.
$$
Let
$$
P_{[\gamma,\infty)}:=A_\gamma P_+A_{-\gamma},
\quad
P_{(-\infty,\gamma]}:=A_\gamma P_-A_{-\gamma},
\quad \gamma\in\mathbb Z^d,
$$
where $A_\gamma$ is defined by \eqref{A_gamma}.
For $\mathbf n\in\mathbb Z^d_+$ define the partial sum operators on $\mathcal P_1$ by
\begin{equation}\label{eq:Sn-new}
S_{\mathbf n}x:=x-P_{[\mathbf n,\infty)}x-P_{(-\infty,-\mathbf n]}x=\sum\limits_{\gamma\in F:-\mathbf n\le_d \gamma<_d \mathbf n} c_\gamma\,e^\theta_\gamma,
\quad x\in\mathcal P_1,
\end{equation}
where $F\subset\mathbb Z^d$ is a finite set.

The following result shows the weak (1,1) type estimate of the partial sum operators.
\begin{theorem}\label{prop:Sn-extension-new}
For every $\mathbf n\in\mathbb Z^d_+$, the operator $S_{\mathbf n}$ defined by
\eqref{eq:Sn-new} extends uniquely to a bounded linear map
$$
S_{\mathbf n}:L_1(\mathbb T^d_\theta)\rightarrow L_{1,\infty}(\mathbb T^d_\theta),
$$
and we have
$$
\|S_{\mathbf n}x\|_{L_{1,\infty}(\mathbb T^d_\theta)}
\le C\,\|x\|_{L_1(\mathbb T^d_\theta)},
\quad x\in L_1(\mathbb T^d_\theta),
$$
where $C>0$ is a constant independent of $\mathbf n$.
\end{theorem}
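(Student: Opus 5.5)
The plan is to write $S_{\mathbf n}$ as a fixed combination of conjugates of $P_\pm$ by isometries and then use the quasi-triangle inequality in $L_{1,\infty}$.

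\textbf{Step 1: reduce to $P_\pm$.} By \eqref{eq:Sn-new},
$$
S_{\mathbf n}=I-A_{\mathbf n}P_+A_{-\mathbf n}-A_{-\mathbf n}P_-A_{\mathbf n}
$$
on $\mathcal P_1$. Before using this, I will check on the basis vectors $e^\theta_\gamma$ that $A_\gamma P_+A_{-\gamma}$ really acts as the projection onto $[\gamma,\infty)$. The point is that $A_{-\gamma}$ maps $L_1(\mathbb T^d_\theta)_\eta$ into $L_1(\mathbb T^d_\theta)_{\eta-\gamma}$. This follows as in the proof of Lemma~\ref{lem2}, since the phase factors coming from \eqref{1} are scalars. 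Translation invariance of $\le_d$ then gives the claim.

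\textbf{Step 2: the isometries.} By Lemma~\ref{A_gamma1}, $A_{\pm\mathbf n}$ is an isometry both on $L_1(\mathbb T^d_\theta)$ and on $L_{1,\infty}(\mathbb T^d_\theta)$. This holds because $L_{1,\infty}$ is a quasi-Banach symmetric space and $|e^\theta_\gamma x|=|x|$.

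\textbf{Step 3: estimate each term.} For $x\in\mathcal P_1$, \eqref{Pplus-weak11} and Step 2 give
$$
\|A_{\mathbf n}P_+A_{-\mathbf n}x\|_{L_{1,\infty}}=\|P_+A_{-\mathbf n}x\|_{L_{1,\infty}}\le C\|A_{-\mathbf n}x\|_{L_1}=C\|x\|_{L_1}.
$$
The same bound holds for the $P_-$ term, since $P_-$ is of weak type $(1,1)$ via \eqref{def:Pminus}. For this I need $T_0$ bounded on $L_1$, and that follows from \eqref{eq:T0-average}, which gives $\|T_0x\|_{L_1}\le\|x\|_{L_1}$. The identity term is controlled by $\|x\|_{L_{1,\infty}}\le\|x\|_{L_1}$.

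\textbf{Step 4: combine and extend.} $\|\cdot\|_{L_{1,\infty}}$ is only a quasi-norm, with quasi-triangle constant $2$. Summing the three terms therefore gives $\|S_{\mathbf n}x\|_{L_{1,\infty}}\le C'\|x\|_{L_1}$ with $C'$ depending only on $C$, and not on $\mathbf n$. Since $\mathcal P_1$ is dense in $L_1$ by Lemma~\ref{Pp-dense} and $L_{1,\infty}$ is complete, $S_{\mathbf n}$ extends uniquely by continuity. Uniqueness holds because the estimate forces Cauchy sequences to have a unique limit. For this I will use the standard fact that a uniformly continuous map from a dense subset into a complete metrizable space extends uniquely; it applies because an $F$-norm equivalent to the quasi-norm exists by Aoki--Rolewicz.

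\textbf{Main obstacle.} The main obstacle is Step 1, namely the bookkeeping that the conjugated projections are exactly the interval projections and that $S_{\mathbf n}$ has the stated form. This requires the commutation relations to produce only unimodular scalars, and it requires that $P_+$ restricted to $\mathcal P_1$ be the projection onto $\mathbb Z^d_+$, which holds by construction. Everything else is formal.
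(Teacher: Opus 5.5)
Your argument is correct and is essentially the paper's proof. You write $S_{\mathbf n}=I-A_{\mathbf n}P_+A_{-\mathbf n}-A_{-\mathbf n}P_-A_{\mathbf n}$, bound each conjugated term by \eqref{Pplus-weak11} together with the isometry of $A_{\pm\mathbf n}$ on $L_1$ and $L_{1,\infty}$ (Lemma~\ref{A_gamma1}), control $I$ by $L_1\hookrightarrow L_{1,\infty}$, and extend from $\mathcal P_1$ by density.

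One correction concerns the step you call the main obstacle, and the estimate never actually uses it. By \eqref{1}, $A_\gamma A_{-\gamma}=e^{i\sum_{j<k}\theta_{jk}\gamma_j\gamma_k}I$, so $A_\gamma P_+A_{-\gamma}$ is only this unimodular multiple of the projection onto $[\gamma,\infty)$, and the same phase sits in the paper's $P_{[\gamma,\infty)}$. To get the exact projection, use left multiplication by $(e^\theta_\gamma)^*$ in place of $e^\theta_{-\gamma}$; in any case the phase changes none of the norm bounds.
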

\begin{proof}
Since $\mathcal P_1$ is dense in $L_1(\mathbb T^d_\theta)$ by Lemma~\ref{Pp-dense}, it is enough to prove the result  on
$\mathcal P_1.$ Fix $\mathbf n\in\mathbb Z^d_+$. By \eqref{Pplus-weak11} and the corresponding
estimate for $P_-$, together with Lemma~\ref{A_gamma1}, the projections $P_{[\gamma,\infty)}$ and
$P_{(-\infty,\gamma]}$, defined above, are bounded from
$L_1(\mathbb T^d_\theta)$ to $L_{1,\infty}(\mathbb T^d_\theta)$, with constants
independent of $\gamma$. Therefore, by formula \eqref{eq:Sn-new} and the embedding
$L_1\hookrightarrow L_{1,\infty}$, we obtain
$$
\|S_{\mathbf n}x\|_{L_{1,\infty}(\mathbb T^d_\theta)}
\le C\|x\|_{L_1(\mathbb T^d_\theta)},
\qquad x\in\mathcal P_1,
$$
where $C>0$ is independent of $\mathbf n,$ thereby completing the proof.
\end{proof}

\section{Boundedness of the Hilbert transform and partial sum operators from one symmetric space $\mathcal{E}(\mathbb T^d_{\theta})$ to another $\mathcal{F}(\mathbb T^d_{\theta})$}

In this section, we establish the uniform boundedness of the partial sum operators and the associated Hilbert transform between noncommutative quasi-Banach symmetric spaces, extending the results in \cite{DDdPS,DdPS,R,R2} to the setting of quasi-Banach symmetric spaces whose Boyd indices are not necessarily nontrivial.

For each $f\in L_1(0,1)$, define the Calderón operator $S:L_1(0,1)\to L_{1,\infty}(0,1)$ by
\begin{equation}\label{eq_def_Cald}
(Sf)(t):=\frac{1}{t}\int\limits_0^t f(s)\,ds+\int\limits_t^1\frac{f(s)}{s}\,ds.
\end{equation}
Let $E(0,1)\subset L_1(0,1)$ be a symmetric quasi-Banach space. Define
\begin{equation}\label{opt-space}
\mathcal{S}_E(0,1)=\{f\in L_{1,\infty}(0,1): \exists\, g\in E(0,1)\ \text{such that}\ \mu(f)\le S\mu(g)\},
\end{equation}
and equip this space with the functional
$$
\|f\|_{\mathcal{S}_E (0,1)}:=\inf\{\|g\|_{E(0,1)}:\mu(f)\le S\mu(g)\}.
$$
The space $\mathcal{S}_E$ on $(0,\infty)$ was fisrt introduced in \cite{STZ1} (see also \cite[Definition~15]{AHPSZ} for the case $(0,1)$). It was shown in \cite[Theorem~26]{STZ1} (see also \cite[Theorem~16]{AHPSZ}) that $(\mathcal{S}_E,\|\cdot\|_{\mathcal{S}_E})$ is a symmetric quasi-Banach function space on $(0,1)$ and is the least space for which the Calderón operator $S$ acts boundedly from $E(0,1)$ into $\mathcal{S}_E(0,1)$.

The following theorem is the main result of this section. It establishes the uniform boundedness of the partial sum operators introduced in \eqref{eq:Sn-new} from $\mathcal{E}(\mathbb T^d_{\theta})$ into $\mathcal{S}_{\mathcal{E}}(\mathbb T^d_{\theta})$. Moreover, it extends \cite[Theorem~4.2]{R2} in the special case, when $\mathcal M=L_{\infty}(\mathbb T^d_{\theta})$.

\begin{theorem}\label{EtoF-thm}
Let $E(0,1)\subset L_1(0,1)$ be a symmetric quasi-Banach space and let $\mathcal{S}_E(0,1)$ be the symmetric quasi-Banach space defined by \eqref{opt-space}. Let $\mathcal{E}(\mathbb T^d_{\theta})$ and $\mathcal{S}_{\mathcal{E}}(\mathbb T^d_{\theta})$ be the associated noncommutative symmetric spaces. Then, partial sum operators $S_{\mathbf n}$ and the Hilbert transform $\mathcal{H}_{\mathbb{Z}^d_{+}}$, defined by \eqref{eq:Sn-new} and \eqref{HA}, respectively, admit unique linear bounded extensions from $\mathcal{E}(\mathbb T^d_{\theta})$ into $\mathcal{S}_{\mathcal{E}}(\mathbb T^d_{\theta})$.
\end{theorem}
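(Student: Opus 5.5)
The plan is to reduce Theorem~\ref{EtoF-thm} to a single rearrangement inequality of Calderón type and then apply the definition of $\mathcal S_E$. The key abstract input is \cite[Theorem~14]{STZ1} (cf.\ \cite{AHPSZ}). It states that if a linear operator $T$ is bounded on $L_2$ (more generally, on some $L_p$ with $1<p<\infty$) and is of weak type $(1,1)$, then
\begin{equation*}
\mu(Tx)\le C_T\, S\mu(x),\qquad x\in (L_1\cap L_2)(\mathbb T^d_\theta),
\end{equation*}
where $S$ is the Calderón operator \eqref{eq_def_Cald}. This holds with the noncommutative rearrangement on the finite algebra $(\mathbb T^d_\theta,\tau_\theta)$. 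If the cited result is formulated only for $(0,\infty)$ or for commutative spaces, I will reprove it by the standard Marcinkiewicz-type splitting. Fix $t$ and let $x=x\chi_{(\mu(t,x),\infty)}(|x|)+x\chi_{[0,\mu(t,x)]}(|x|)=:x_1+x_2$. Apply the weak $(1,1)$ bound to $x_1$ and the $L_2$ bound (Chebyshev) to $x_2$. Then use $\mu(t,Tx)\le\mu(t/2,Tx_1)+\mu(t/2,Tx_2)$. This yields $\mu(t,Tx)\lesssim \frac1t\int_0^t\mu(s,x)\,ds+\bigl(\frac1t\int_t^1\mu(s,x)^2ds\bigr)^{1/2}$. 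Using $L_p$ for $p$ near $1$ if needed, the second term is dominated by $\int_t^1\mu(s,x)s^{-1}ds+\frac1t\int_0^t\mu(s,x)\,ds$, i.e.\ by $S\mu(x)$.

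\textbf{Step 1: verifying the hypotheses for $\mathcal H_{\mathbb Z^d_+}$ and $S_{\mathbf n}$.} For $\mathcal H_{\mathbb Z^d_+}$ both hypotheses are already recorded in the paper: $L_p$-boundedness for $1<p<\infty$ and the weak $(1,1)$ estimate follow from \cite[Theorem~4.2, Corollary~4.6]{R2}. For $S_{\mathbf n}$, Theorem~\ref{prop:Sn-extension-new} gives the weak $(1,1)$ bound with a constant independent of $\mathbf n$. The $L_2$ bound holds with constant $1$, since $S_{\mathbf n}$ is an orthogonal projection onto a span of the orthonormal family $\{e^\theta_\gamma\}$; alternatively use $L_p$ via \eqref{LpH} and Lemma~\ref{A_gamma1}. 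Thus $C_{S_{\mathbf n}}$ is uniform in $\mathbf n$.

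\textbf{Step 2: passing to $\mathcal S_E$ and extending.} For $x\in(\mathcal E\cap L_2)(\mathbb T^d_\theta)$, the inequality $\mu(Tx)\le C\,S\mu(x)=S\mu(Cx)$ with $g=C\mu(x)\in E$ gives, by \eqref{opt-space}, $\|Tx\|_{\mathcal S_{\mathcal E}}\le C\|x\|_{\mathcal E}$. Here $C\mu(x)$ is a decreasing function in $E$, so it is a legitimate competitor. The extension then follows from the Convention, provided $(\mathcal E\cap L_2)$ is dense in $\mathcal E$. If $E$ is not separable (e.g.\ $E=L_\infty$-like), I will instead define the extension directly. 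Since $E\subset L_1$, every $x\in\mathcal E$ lies in $L_1$, so $T$ is already defined on $L_1$ into $L_{1,\infty}$ by Theorem~\ref{prop:Sn-extension-new} and \cite{R2}. The rearrangement inequality extends to all of $L_1$ by approximating in $L_1$ with $\mathcal P_1$, passing to $L_{1,\infty}$ convergence, and using the lower semicontinuity of $\mu$ together with the continuity of $S$ from $L_1$ to $L_{1,\infty}$ pointwise in $t$. Uniqueness is taken in this sense, namely as the restriction of the $L_1\to L_{1,\infty}$ map.

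\textbf{Main obstacle.} I expect the main difficulty to be the rearrangement estimate on a finite noncommutative measure space. This includes the quasi-triangle inequality for $\mu$, the handling of the region $t$ near $1$, and the approximation argument extending it from polynomials to all of $L_1$. I would rely on \cite[Theorem~14]{STZ1} first and only fall back on the splitting argument above if its hypotheses do not match.
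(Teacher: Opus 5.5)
Your overall route is the paper's. You invoke \cite[Theorem~14]{STZ1} for $S_{\mathbf n}$ and $\mathcal H_{\mathbb Z^d_+}$ with constants uniform in $\mathbf n$, obtain $\mu(Tx)\lesssim S\mu(x)$, and read off $\|Tx\|_{\mathcal S_{\mathcal E}}\le C\|x\|_{\mathcal E}$ from \eqref{opt-space} with $g=C\mu(x)$. Your Step~2, including the extension through the $L_1\to L_{1,\infty}$ map, is fine.

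The gap is in what you take the key estimate to require, and in your fallback proof of it. Weak type $(1,1)$ plus boundedness on $L_2$ (or on any single $L_p$) does \emph{not} give $\mu(Tx)\lesssim S\mu(x)$ for the Calder\'on operator \eqref{eq_def_Cald}. On $L_\infty(0,1)$, take the rank-one map $Tf=(\int_0^1 f)\,h$ with $\mu(t,h)=t^{-1/2}\log^{-1}(e/t)$. It is of weak type $(1,1)$ and bounded on $L_2$, but $\mu(t,T\chi_{(0,1)})=\mu(t,h)$ is not $O(1+\log(1/t))=O(S\chi_{(0,1)}(t))$. Even boundedness on every $L_p$ is not enough without a growth rate: $\mu(t,h)=\log^2(e/t)$ gives $\|T\|_{L_p\to L_p}\sim p^2$ and the same failure.

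Accordingly, the last step of your splitting argument is false. For $\mu(x)=\chi_{(0,a)}$ and $t\ll a$, the term $\bigl(\frac1t\int_t^1\mu(s,x)^2ds\bigr)^{1/2}\approx(a/t)^{1/2}$ is not dominated by $S\mu(x)(t)=1+\log(a/t)$. Using $L_p$ with $p$ near $1$ only worsens this to $(a/t)^{1/p}$. A single $L_p$ bound yields the Calder\'on operator of the couple $(L_1,L_p)$, whose second term $t^{-1/p}\int_t^1 s^{1/p-1}\mu(s,x)\,ds$ dominates $\int_t^1\mu(s,x)\frac{ds}{s}$. The logarithmic term needs control at the $L_\infty$ end.

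That control is available here, and it is what Step~1 should verify.
On $L_2$, $S_{\mathbf n}$ is an orthogonal projection and $\mathcal H_{\mathbb Z^d_+}^*=-\mathcal H_{\mathbb Z^d_+}$. Hence the adjoints are also of weak type $(1,1)$, uniformly in $\mathbf n$. Marcinkiewicz interpolation then gives $\|T\|_{L_p\to L_p}\lesssim (p-1)^{-1}$ for $1<p\le 2$, and duality gives $\|T\|_{L_q\to L_q}\le Aq$ for $q\ge 2$.

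With this, redo the splitting using a truncation instead of a cut-off. For $x\ge0$:
\begin{itemize}
\item put $x_1=(x-\mu(t,x))_+$ and handle it by the weak $(1,1)$ bound;
\item write $x_2=\min(x,\mu(t,x))=\int_0^{\mu(t,x)}e_\lambda\,d\lambda$, where $e_\lambda=\chi_{(\lambda,\infty)}(x)$ and $\tau_\theta(e_\lambda)>t$;
\item apply Chebyshev with $q\approx\log(\tau_\theta(e_\lambda)/s)$ to get $\mu(s,Te_\lambda)\lesssim A\bigl(1+\log(\tau_\theta(e_\lambda)/s)\bigr)$ for $s<t$;
\item use that $y\mapsto\int_0^{t/2}\mu(s,y)\,ds$ is subadditive: average over $s\in(0,t/2)$ and integrate in $\lambda$.
\end{itemize}
The layer-cake identity $\int_0^\infty\log_+\bigl(d(\lambda,|x|)/t\bigr)\,d\lambda=\int_t^1\mu(s,x)\frac{ds}{s}$ then gives $\mu(t/2,Tx_2)\lesssim A\bigl(\mu(t,x)+\int_t^1\mu(s,x)\frac{ds}{s}\bigr)$. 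General $x$ follows via the polar decomposition.

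Whatever the exact formulation of \cite[Theorem~14]{STZ1}, it must contain such an $L_\infty$-end condition. Quoting it as ``weak $(1,1)$ plus $L_2$'' leaves the central step unjustified.
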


\begin{proof}
By Theorems~\ref{main-basis-thm} and \ref{prop:Sn-extension-new}, the partial sum operators $S_{\mathbf n}$ extends to a uniformly bounded linear operator on $L_p(\mathbb T^d_{\theta})$ for $1<p<\infty$, and from $L_1(\mathbb T^d_{\theta})$ into $L_{1,\infty}(\mathbb T^d_{\theta})$. Similarly, by Theorem~4.2 and Corollary~4.6 in \cite{R2}, the Hilbert transform $\mathcal{H}_{\mathbb{Z}^d_{+}}$ admits bounded linear extensions on $L_p(\mathbb T^d_{\theta})$ for $1<p<\infty$, and from $L_1(\mathbb T^d_{\theta})$ into $L_{1,\infty}(\mathbb T^d_{\theta})$.
Therefore, both operators satisfy the hypotheses of \cite[Theorem~14]{STZ1}, and consequently,
$$
\mu(S_{\mathbf n}(x))\lesssim S\mu(x), \quad x\in L_1(\mathbb T^d_{\theta}),
$$
and
$$
\mu(\mathcal{H}_{\mathbb{Z}^d_{+}}(y))\lesssim S\mu(y), \quad y\in L_1(\mathbb T^d_{\theta}).
$$
It follows that
\begin{align*}
\|S_{\mathbf n}(x)\|_{\mathcal{S}_{\mathcal{E}}(\mathbb T^d_{\theta})}
&=\|\mu(S_{\mathbf n}(x))\|_{\mathcal{S}_E(0,1)}
\lesssim \|S\|_{E(0,1)\to\mathcal{S}_E(0,1)}\,\|\mu(x)\|_{E(0,1)} \\
&=\|S\|_{E(0,1)\to\mathcal{S}_E(0,1)}\,\|x\|_{\mathcal{E}(\mathbb T^d_{\theta})},
\quad x\in\mathcal{E}(\mathbb T^d_{\theta}).
\end{align*}
An analogous estimate holds for $\mathcal{H}_{\mathbb{Z}^d_{+}}$, that is,
$$
\|\mathcal{H}_{\mathbb{Z}^d_{+}}(y)\|_{\mathcal{S}_{\mathcal{E}}(\mathbb T^d_{\theta})}
\lesssim
\|S\|_{E(0,1)\to\mathcal{S}_E(0,1)}\,\|y\|_{\mathcal{E}(\mathbb T^d_{\theta})},
\quad y\in\mathcal{E}(\mathbb T^d_{\theta}),
$$
which completes the proof.
\end{proof}

If $E(0,1)=L_1(0,1),$ then it was proved in \cite[Proposition 35]{STZ1} that $\mathcal{S}_{L_1}(0,1)=(L_{1,\infty}(0,1))^0,$ where
$$(L_{1,\infty}(0,1))^0:=\{f\in L_{1,\infty}(0,1):t\mu(t,f)\to 0, \,\,\text{as}\,\ t\to0\}.$$
Define 
$$(L_{1,\infty}(\mathbb T^d_{\theta}))^0:=\{x\in L_{1,\infty}(\mathbb T^d_{\theta})):\mu(x)\in (L_{1,\infty}(0,1))^0 \}.$$
Then, the following refines the results in \cite[Corollary~4.6]{R2} and \cite[Theorem 1.4]{DDdPS2}.
\begin{corollary}The partial sum operators $S_{\mathbf n}$ and the Hilbert transform $\mathcal{H}_{\mathbb{Z}^d_{+}}$ defined by \eqref{eq:Sn-new} and \eqref{HA}, respectively, 
$$S_{\mathbf n}, \mathcal{H}_{\mathbb{Z}^d_{+}}:L_1(\mathbb T^d_{\theta})\to (L_{1,\infty}(\mathbb T^d_{\theta}))^0$$ are bounded.
    
\end{corollary}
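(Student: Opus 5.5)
The plan is to specialize Theorem~\ref{EtoF-thm} to $E(0,1)=L_1(0,1)$ and then identify the target space. Note first that $L_1(0,1)$ is a symmetric (quasi-)Banach space contained in $L_1(0,1)$, so the hypotheses of Theorem~\ref{EtoF-thm} hold. That theorem then says that $S_{\mathbf n}$ and $\mathcal H_{\mathbb Z^d_+}$ extend boundedly from $\mathcal E(\mathbb T^d_\theta)=L_1(\mathbb T^d_\theta)$ into $\mathcal S_{\mathcal E}(\mathbb T^d_\theta)$. For $S_{\mathbf n}$, the extension is uniform in $\mathbf n$ because the constant comes from \cite[Theorem~14]{STZ1} via Theorem~\ref{prop:Sn-extension-new}. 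Concretely, I would record the pointwise estimates obtained in the proof of Theorem~\ref{EtoF-thm},
$$
\mu(S_{\mathbf n}x)\lesssim S\mu(x),\qquad \mu(\mathcal H_{\mathbb Z^d_+}x)\lesssim S\mu(x),\quad x\in L_1(\mathbb T^d_\theta),
$$
where $S$ is the Calderón operator \eqref{eq_def_Cald}.

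The second step is to identify $\mathcal S_{L_1}(\mathbb T^d_\theta)$. By construction, the noncommutative space associated with $\mathcal S_{L_1}(0,1)$ consists of all $x\in S(\mathbb T^d_\theta)$ with $\mu(x)\in\mathcal S_{L_1}(0,1)$. By \cite[Proposition~35]{STZ1}, $\mathcal S_{L_1}(0,1)=(L_{1,\infty}(0,1))^0$, with equivalent quasi-norms. Hence $\mathcal S_{L_1}(\mathbb T^d_\theta)=(L_{1,\infty}(\mathbb T^d_\theta))^0$ as sets, with equivalent quasi-norms, exactly as in the definition preceding the corollary. Combining the two steps gives boundedness of both operators from $L_1(\mathbb T^d_\theta)$ into $(L_{1,\infty}(\mathbb T^d_\theta))^0$.

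If one prefers not to rely on the exact identification, the fallback is to verify membership in $(L_{1,\infty})^0$ directly from the pointwise bound. Write $f=\mu(x)\in L_1(0,1)$. Then
$$
t\,(Sf)(t)=\int_0^t f(s)\,ds+t\int_t^1\frac{f(s)}{s}\,ds .
$$
The first term tends to $0$ as $t\to0$ by absolute continuity of the integral. The second term tends to $0$ by dominated convergence, since $t/s\le1$ on $[t,1]$ and $t f(s)/s\to0$ pointwise. Therefore $t\,\mu(t,S_{\mathbf n}x)\to0$ as $t\to0$, and likewise for $\mathcal H_{\mathbb Z^d_+}x$. Boundedness into $L_{1,\infty}$ is already given by Theorem~\ref{prop:Sn-extension-new} and \cite[Corollary~4.6]{R2}.

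The only delicate point is that the extension produced on $L_1$ must agree with the one for which the pointwise rearrangement inequality holds. This is handled by density of $\mathcal P_1$ (Lemma~\ref{Pp-dense}) together with continuity of the extensions into $L_{1,\infty}$. One also needs that the limit space $(L_{1,\infty})^0$ is closed in $L_{1,\infty}$; this follows from the fallback computation, or directly from the quasi-triangle inequality applied to $\mu$.
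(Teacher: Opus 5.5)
Your proposal is correct and follows the paper's argument, which is left implicit there: specialize Theorem~\ref{EtoF-thm} to $E=L_1(0,1)$, then identify $\mathcal S_{L_1}(\mathbb T^d_\theta)$ with $(L_{1,\infty}(\mathbb T^d_\theta))^0$ via \cite[Proposition~35]{STZ1}. Your fallback computation showing $t\,(S\mu(x))(t)\to0$ is a useful self-contained check of the only inclusion the corollary actually needs, namely $\mathcal S_{L_1}\subset(L_{1,\infty})^0$.
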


\subsection{Conflict of Interest Statement:} The author declares that there is no conflict of interest.

\subsection{ Data Availability Statement:} No datasets were generated or analyzed during the current study.
    
\section{Acknowledgements}
%The work was partially supported by the grant No. AP23487088 of the Science Committee of the Ministry of Science and Higher
%Education of the Republic of Kazakhstan.

The work was partially supported by the Australian Research Council. K.T. was partially supported by ARC grant FL17010005. F.S. was supported by ARC grant FL17010005 and ARC grant DP230100434. 
%Authors thank the anonymous referees for reading the paper and providing thoughtful comments, which improved the exposition of the paper.

\end{document}